\newtheorem{theorem}{Theorem}[section]
\newtheorem{lemma}[theorem]{Lemma}
\newtheorem{proposition}[theorem]{Proposition}
\newtheorem{remark}[theorem]{Remark}
\newtheorem{example}{Example}[section]
\newtheorem{problem}{Problem}[section]
\newcommand{\Rmnum}[1]{\expandafter\@slowromancap\romannumeral #1@}
\def\e{\operatorname{Ext}}
\begin{document}
\title{On the Structure of Tame Graded Basic Hopf Algebras II}
\author{Hua-Lin Huang}\thanks{The first author is supported by NSF (No. 10601052) and the second
author is supported by NSF (No. 10801069).  }
\address{School of Mathematics, Shandong University, Jinan, Shandong 250100, China}
\email{hualin@sdu.edu.cn}
\author{Gongxiang Liu}
\address{Department of Mathematics, Nanjing
University, Nanjing, Jiangsu 210093, China} \email{gxliu@nju.edu.cn}
\maketitle
\begin{abstract}
In continuation of the article \cite{L} we classify all radically
graded basic Hopf algebras of tame type over an algebraically closed
field of characteristic 0.
\end{abstract}

\section{Introduction}

Throughout this paper $k$ denotes an algebraically closed field and
all spaces are $k$-spaces. By an algebra we mean a finite
dimensional associative algebra with identity element. We freely use
the results, notations, and conventions of \cite{actions}.

According to the fundamental result of Drozd  \cite{Drozd}, every
finite dimensional algebra exactly belongs to one of following three
kinds of algebras: algebras of finite representation type, algebras
of tame type and wild algebras. For the algebras of the former two
kinds, a classification of indecomposable modules seems feasible. By
contrast, the module category of a wild algebra, being
``complicated" at least as that of any other algebra, can't afford
such a classification. Inspired by the Drozd's result, one is often
interested in classifying a given kind of algebras according to
their representation type. The class of finite-dimensional Hopf
algebras has been considered for quite a long time. For group
algebras of finite groups, the representation type of a block is
governed by its defect groups. A block of a finite-dimensional group
algebra is of finite representation type if and only if the
corresponding defect groups are cyclic while is tame if and only if
char$k=2$ and its defects groups are dihedral, semidihedral or
generalized quaternion. See
\cite{BDro}\cite{Benson}\cite{Karin}\cite{Hig}. In the case of small
quantum groups, i.e., Frobenius-Lusztig kernels, the only tame one
is $\textbf{u}_{q}(\mathfrak{sl}_{2})$ and the others are all wild
\cite{Su}\cite{Xiao}\cite{Cil}. The classification for
finite-dimensional cocommutative Hopf algebras, i.e., finite
algebraic groups, of finite representation type and tame type was
given by Farnsteiner and his cooperators recently
\cite{Far2}\cite{Far3}\cite{Far4}\cite{Far5}\cite{Far6}. The
representation theory of such cocommutative Hopf algebras was also
studied in \cite{Far1}\cite{Far7}.

Meanwhile,  basic Hopf algebras and their duals, pointed Hopf
algebras, have been studied intensively by many authors. See, for
example, \cite{AH}\cite{AH3}\cite{G2}. Our intention is to classify
finite dimensional basic Hopf algebras through their representation
type. In \cite{LL}, the authors have classified all basic Hopf
algebras of finite representation type and show that they are all
monomial Hopf algebras (see \cite{CHYZ}). For basic Hopf algebras of
tame type, the following result, up to the authors's knowledge, is
the best (see \cite{L}): Let $H$ be a basic Hopf algebra over an
algebraically closed field $k$ of characteristic different from 2,
then gr$H$ is tame if and only if gr$H\cong k<x,y>/I\times
(kG)^{\ast}$ for some finite group $G$ and some ideal $I$ which is
one of the following forms:

(1): $I=(x^{2}-y^{2},\;yx-ax^{2},\;xy)\;\;\;\;$ for $0\neq a\in k$;

(2): $I=(x^{2},\;y^{2},\;(xy)^{m}-a(yx)^{m})\;\;\;\;$ for $0\neq
a\in k$ and $m\geq 1$;

(3): $I=(x^{n}-y^{n},\;xy,\;yx)\;\;\;\;$ for $n\geq 2$;

(4): $I=(x^{2},\;y^{2},\;(xy)^{m}x-(yx)^{m}y)\;\;\;\;$ for $m\geq
1$;

(5): $I=(yx-x^{2},\;y^{2})$.

Here gr$H$ denotes the radically graded algebra of $H$ and
``$\times$" is the \emph{bosonization} defined in \cite{Majid} or
called \emph{biproduct} in \cite{Radford}. By this result, there are
at most five classes of tame graded basic Hopf algebras. By a
conclusion of Radford or Majid (see \cite{Radford}\cite{Majid}), for
an algebra $\Lambda$ and a finite group $G$, the bosonization
$\Lambda \times (kG)^{\ast}$ is a Hopf algebra if and only if
$\Lambda$ is a braided Hopf algebra in
$^{(kG)^{\ast}}_{(kG)^{\ast}}{\mathcal{YD}}$.  For an algebra
$k<x,y>/I$, the above conclusion dose not imply the existence of
finite group $G$ satisfying $k<x,y>/I$ is a braided Hopf algebra in
$^{(kG)^{\ast}}_{(kG)^{\ast}}{\mathcal{YD}}$. That's to say, for the
ideals $I$ listed in above conclusion, we don't know whether
$k<x,y>/I \times (kG)^{\ast}$ is a Hopf algebra or not! In fact,
this question is formulated as an open question posted in \cite{L}
(Problem 5.1):

\begin{problem} For a tame local graded Frobenius algebra $A$,
give an effective method to determine whether there is a finite
group $G$ such that $A$ is a braided Hopf algebra in
$^{(kG)^{\ast}}_{(kG)^{\ast}}{\mathcal{YD}}$. If such a $G$ exists,
then find all of them.
\end{problem}

In this paper, we will solve this problem. Indeed, we will show that
only the ideals $I=(x^{2},\;y^{2},\;(xy)^{m}-a(yx)^{m})$ can appear.
For more subtle description, see Theorems 4.9, 4.16, 5.2 and the
followed remarks. Then the class of tame graded basic Hopf algebras
can be classified completely.

The basic idea is simple. For a basic Hopf algebra $H$, we can
construct its radically graded version gr$H=H/J_{H}\oplus
J_{H}/J_{H}^{2}\oplus\cdots$. Then we establish the Gabriel's
theorem for graded basic Hopf algebras, that is, we show that there
is a Hopf surjection $kQ \rightarrow$ gr$ H$ where $Q$ is the
Gabriel quiver of gr$H$. By Theorem 2.3 of \cite{G2}, $Q$ is a
covering quiver $\Gamma_{G}(W)$. We find that $W$ consists of at
most two elements and the group generated by $W$ is automatically
abelian. Then we lift the ideals $I$ to the ideals $\widetilde{I}$
of the path algebra $k\Gamma_{G}(W)$ and the main difficulty is to
show when $\widetilde{I}$ is a Hopf ideal.

The paper is organized as follows. The next section contains all
knowledge that we need to go ahead. In particular, the works of
Green and Solberg on basic Hopf algebras are recalled and the
Gabriel's theorem for basic Hopf algebras is established. Some
combinatorial relations, which is the key to give a criterion to
determine an ideal to be a Hopf ideal, will be given in Section 3.
Section 4 deals with the classification of tame graded basic Hopf
algebras in the case of they are connected as algebras. Using
crossed products and the results gotten in Section 4, the class of
tame graded basic Hopf algebras are classified at the last section.

  \section{Preliminaries}
  In the following of this paper, we always assume that the
characteristic of $k$ is 0 unless otherwise stated.

  A \emph{quiver} is an oriented graph $Q=(Q_{0},\;Q_{1})$,
   where $Q_{0}$ denotes the set of vertices and $Q_{1}$
   denotes the set of arrows. $kQ$ denotes its \emph{path
   algebra}. An ideal $I$ of $kQ$
 is called \emph{admissible} if $J^{N}\subset I \subset J^{2}$ for
 some $N\geq 2$, where $J$ is the ideal generated by all arrows.

 For a basic algebra $A$, by the Gabriel's Theorem,
 there is a unique quiver $Q_{A}$,
 and an admissible ideal $I$ of $kQ_{A}$, such that $A\cong
 kQ_{A}/I$ (see \cite{ASS} and \cite{representation}). The quiver
 $Q_{A}$ is called the \emph{Gabriel quiver} or \emph{Ext-quiver} of $A$.

 Next, let us recall the definition of \emph{covering quivers} (see
\cite{G2}). Let $G$ be a finite group and let
$W=(w_{1},w_{2},\ldots,w_{n})$ be a sequence of elements of $G$. We
say $W$ is a \emph{weight sequence} if, for each $g\in G$, the
sequences $W$ and $(gw_{1}g^{-1},gw_{2}g^{-1},\ldots,gw_{n}g^{-1})$
are the same up to a permutation. In particular, $W$ is closed under
conjugation. Define a quiver, denoted by $\Gamma_{G}(W)$, as
follows. The vertices of $\Gamma_{G}(W)$ is the set $\{v_{g}\}_{g\in
G}$ and the arrows are given by
$$\{(a_{i},g):\;v_{g^{-1}}\rightarrow v_{w_{i}g^{-1}}\;|\; i=1,2,\ldots,n, g\in G\}$$
 We call this quiver the covering quiver (with
respect to $W$).

\begin{example}
\emph{Let $G=<g\;|\;g^{n}=1>$ and $W=(g,g)$, then the corresponding
covering quiver is}
\begin{figure}[hbt]
\begin{picture}(350,100)(-120,0)
\put(42,100){\makebox(0,0){$\bullet$}}

\put(45,107){\makebox(0,0){$v_{1}$}}

\put(45,100){\vector(2,-1){40}}\put(44,98){\vector(2,-1){40}}
\put(95,70){\makebox(0,0){$\bullet\; v_{g}$}}

\put(91,65){\vector(0,-1){40}}\put(87,65){\vector(0,-1){40}}
\put(97,20){\makebox(0,0){$\bullet\; v_{g^{2}}$}}

\put(80,10){\makebox(0,0){$\cdot$}}
\put(65,5){\makebox(0,0){$\cdot$}}
\put(50,0){\makebox(0,0){$\cdot$}}
\put(45,-2){\makebox(0,0){$\bullet$}}
\put(60,-7){\makebox(0,0){$v_{g^{n-3}}$}}

\put(35,3){\vector(-2,1){30}}\put(32,-1){\vector(-2,1){30}}
\put(10,23){\makebox(0,0){$\bullet\;v_{g^{n-2}}$}}

\put(-3,30){\vector(0,1){35}}\put(-7,30){\vector(0,1){35}}
\put(10,70){\makebox(0,0){$\bullet\;v_{g^{n-1}}$}}

\put(0,75){\vector(3,2){35}}\put(-2,77){\vector(3,2){35}}
\end{picture}
\end{figure}

\emph{We denote this quiver by $\mathbb{Z}_{n}(2)$.}
\end{example}

\begin{lemma} Let $\Gamma_{G}(W)$ be a covering quiver. If the
length of  $W$ is 2, then the subgroup of $G$ generated by $W$ is an
abelian group.\end{lemma}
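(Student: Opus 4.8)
The plan is to exploit the defining property of a weight sequence directly, by conjugating with one of the generators themselves. Write $W=(w_1,w_2)$, and recall that for every $g\in G$ the pair $(gw_1g^{-1},gw_2g^{-1})$ must agree with $(w_1,w_2)$ up to a permutation; equivalently, the two-element multisets $\{gw_1g^{-1},gw_2g^{-1}\}$ and $\{w_1,w_2\}$ coincide. Since the conjugation condition is required for \emph{all} $g$, I am free to test it at whatever group element is most convenient, and the key observation is that testing at a generator collapses the problem.

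First I would specialize to $g=w_1$. Because conjugation by $w_1$ fixes $w_1$, the conjugated pair is $(w_1,\,w_1w_2w_1^{-1})$, so the weight-sequence hypothesis forces the equality of multisets $\{w_1,\,w_1w_2w_1^{-1}\}=\{w_1,w_2\}$, whence $w_1w_2w_1^{-1}\in\{w_1,w_2\}$. A short case analysis then closes the argument: if $w_1w_2w_1^{-1}=w_2$ then $w_1w_2=w_2w_1$ directly; if instead $w_1w_2w_1^{-1}=w_1$ then $w_2=w_1$, and the two generators commute trivially. In either case $w_1$ and $w_2$ commute.

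Finally, since $\langle w_1,w_2\rangle$ is generated by two commuting elements, every word in $w_1^{\pm 1},w_2^{\pm 1}$ can be rearranged into a normal form, so the subgroup is abelian, as claimed. The only point demanding care — rather than a genuine obstacle — is fixing the reading of ``same up to permutation'' as equality of multisets and handling the degenerate possibility $w_1=w_2$; once these are settled the conclusion is immediate. It is worth noting that one does not need the full strength of the hypothesis ranging over all $g\in G$: conjugation by a single generator already yields commutativity, which is why the length-$2$ restriction is exactly what makes the statement work.
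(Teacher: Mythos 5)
Your proof is correct and follows essentially the same route as the paper's: both conjugate the pair by one of its own elements, invoke stability of $W$ under conjugation to get $w_1w_2w_1^{-1}\in\{w_1,w_2\}$, and run the identical two-case analysis (either the elements commute outright, or they coincide and commute trivially). The only difference is presentational — you spell out the multiset reading and the sufficiency of testing a single conjugation, which the paper leaves implicit.
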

\begin{proof} Let $W=\{g,h\}$. Since $W$ is stable under the
conjugation, $ghg^{-1}=g$ or $ghg^{-1}=h$. If $ghg^{-1}=g$, then
$g=h$. If $ghg^{-1}=h$, then $gh=hg$. The lemma is clear
now.\end{proof}

 The following conclusion (see Theorem 2.3 in
\cite{G2}) states the importance of covering quivers.

\begin{lemma} Let $H$ be a finite dimensional basic Hopf algebra
over $k$. Then there exists a finite group $G$ and a weight sequence
$W=(w_{1},w_{2},\ldots,w_{n})$ of $G$, such that $H\cong
k\Gamma_{G}(W)/I$ for an admissible ideal $I$.
\end{lemma}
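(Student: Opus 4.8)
The statement coincides with Theorem 2.3 of \cite{G2}; the plan is to derive it by combining Gabriel's theorem with the coalgebra analysis of the (pointed) dual, in the spirit of Cibils--Rosso.

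\textbf{Step 1 (Gabriel plus duality).} Since $H$ is basic and finite dimensional, Gabriel's theorem already gives $H\cong kQ_{H}/I$ with $Q_{H}$ the Gabriel quiver and $I$ admissible, so it remains only to identify $Q_{H}$ with a covering quiver $\Gamma_{G}(W)$. I would pass to the dual $H^{\ast}$, which is a finite dimensional \emph{pointed} Hopf algebra. Its coradical is a group algebra $kG$, where $G=G(H^{\ast})$ is the finite group of grouplike elements; dually, $G$ indexes the complete set of primitive orthogonal idempotents of $H$, hence the vertex set $\{v_{g}\}_{g\in G}$ of $Q_{H}$.

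\textbf{Step 2 (arrows as skew-primitives).} The number of arrows $v_{a}\to v_{b}$ in $Q_{H}$ equals $\dim_{k}\e^{1}_{H}(S_{a},S_{b})$ for the simple modules $S_{a},S_{b}$. Dualizing, this is the dimension of the space of nontrivial $(a,b)$-skew-primitive elements of $H^{\ast}$, i.e. the $(a,b)$-isotypic piece of the first term $H^{\ast}_{1}/H^{\ast}_{0}$ of the coradical filtration. I would therefore study $V:=H^{\ast}_{1}/H^{\ast}_{0}$, which carries commuting left and right $kG$-coactions induced by the comultiplication, equivalently is an object of the Yetter--Drinfeld category ${}^{kG}_{kG}\mathcal{YD}$ through the bosonization $\gr H^{\ast}\cong R\,\#\,kG$.

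\textbf{Step 3 (producing the weight sequence).} To each nontrivial $(a,b)$-skew-primitive I attach the element $w=ab^{-1}\in G$; grouping a homogeneous basis of $V$ by this value records a finite multiset $W=(w_{1},\dots,w_{n})$ of elements of $G$. The adjoint $G$-action coming from the Yetter--Drinfeld structure carries a homogeneous piece of degree $w$ to one of degree $gwg^{-1}$ and is bijective, hence preserves the multiset of degrees; thus $W$ and $(gw_{1}g^{-1},\dots,gw_{n}g^{-1})$ coincide up to permutation for every $g\in G$, so $W$ is a weight sequence. Finally, translating by grouplikes shows that a skew-primitive of degree $w_{i}$ generates, for each $g\in G$, exactly one arrow $v_{g^{-1}}\to v_{w_{i}g^{-1}}$, which is precisely the arrow set of $\Gamma_{G}(W)$. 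This yields $Q_{H}\cong\Gamma_{G}(W)$, and combined with the admissible $I$ from Step 1 we obtain $H\cong k\Gamma_{G}(W)/I$.

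\textbf{Main obstacle.} The routine inputs are Gabriel's theorem and the pointed-versus-basic duality; the genuine content lies in Step 3, namely verifying that the degree multiset of $V$ is closed under conjugation. This cannot be read off from the coalgebra structure of $H^{\ast}$ alone: it requires the full Yetter--Drinfeld compatibility between the $G$-grading (the coaction) and the $G$-action on the degree-one part of the bosonization, which is exactly the point where the \emph{Hopf} structure of $H^{\ast}$, and not merely its coalgebra structure, is used. Admissibility of $I$, by contrast, is immediate from Step 1.
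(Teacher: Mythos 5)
Your proposal is essentially correct, but note first that the paper itself contains no proof of this statement at all: it is quoted as Theorem 2.3 of \cite{G2}, so the real comparison is with Green--Solberg's argument, and yours takes the dual route. Green and Solberg work entirely on the algebra side: they lift the idempotents $\{p_{g}\}$ of $H/J_{H}\cong (kG)^{*}$ to orthogonal idempotents $\{v_{g}\}$ of $H$, let $kG=(H/J_{H})^{*}$ act on $H$ from both sides, and observe that these actions permute the $v_{g}$ by translation and restrict to bijections $v_{f}(J_{H}/J_{H}^{2})v_{d}\to v_{fg^{-1}}(J_{H}/J_{H}^{2})v_{dg^{-1}}$, resp. $v_{g^{-1}f}(J_{H}/J_{H}^{2})v_{g^{-1}d}$; the left action makes the arrow count depend only on the ratio of the endpoints, and comparing the left with the right action forces the multiset of ratios to be closed under conjugation, which is exactly the covering-quiver shape. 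You instead dualize to the pointed Hopf algebra $H^{*}$ and encode the same two invariances via skew-primitives: translation comes from multiplication by grouplikes, conjugation from the Yetter--Drinfeld compatibility in $\operatorname{gr}H^{*}\cong R\,\#\,kG$. This is the Cibils--Rosso/van Oystaeyen--Zhang picture, consistent with the paper's own remark that its Lemma 2.5 is dual to Theorem 4.5 of \cite{FP}; your route connects directly to the coradical filtration and the lifting method for pointed Hopf algebras, whereas the \cite{G2} argument never leaves $H$ and feeds straight into the allowable-bimodule formalism the paper uses afterwards (Lemmas 2.3--2.6). Two small repairs to your write-up: (i) attaching $w=ab^{-1}$ to \emph{every} basis vector of $V=H^{*}_{1}/H^{*}_{0}$ overcounts the weights by a factor of $|G|$; you must first normalize by right translation, i.e.\ take the degree multiset of the Yetter--Drinfeld module $R_{1}$ rather than of the whole Hopf bimodule $V$, so that $W$ has $n$ entries each contributing one $G$-orbit of arrows. (ii) The bosonization machinery in Step 3 is heavier than needed: left (right) multiplication by a grouplike is a coalgebra automorphism of $H^{*}$, hence preserves the coradical filtration and carries $(a,b)$-skew-primitives to $(ga,gb)$- (resp. $(ag,bg)$-) skew-primitives, taking trivial ones to trivial ones; this elementary observation already yields both the translation and the conjugation invariance.
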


Let $\Gamma_{G}(W)$ be a covering quiver, a natural question is when
there is a Hopf structure on the path algebra $k\Gamma_{G}(W)$. To
answer this question, we need the concept \emph{allowable
$kG$-bimodule} which was introduced by Green and Solberg \cite{G2}.
Denote $V^{d}_{f}$ the $k$-space with basis the arrows from $v_{d}$
to $v_{f}$ for $d,f\in G$. We say a $kG$-bimodule structure on
$k\Gamma_{G}(W)$ is allowable if for any $g,d,f\in G$, the following
conditions hold:

(i) $g\cdot v_{f}=v_{fg^{-1}}$ and $v_{f}\cdot g=v_{g^{-1}f}$;

(ii)$g\cdot V^{d}_{f}\subset V^{dg^{-1}}_{fg^{-1}}$ and $
V^{d}_{f}\cdot g\subset V^{g^{-1}d}_{g^{-1}f}$.

For any vertex $v_{h}$ of $\Gamma_{G}(W)$ and any $x\in V^{d}_{f}$,
define three maps as follows:
$$\varepsilon(v_{h})=\left \{
\begin{array}{ll} 1, & \;\;\textmd{if}\; h=e\\
0 & \;\;\textrm{otherwise}
\end{array}\right.,\;\;\;\;\varepsilon(x)=0;$$
$$\Delta(v_{h})=\sum_{g\in G}v_{hg^{-1}}\otimes v_{g},\;\;\;\;
\Delta(x)=\sum_{g\in G}(g\cdot x\otimes v_{g}+ v_{g}\otimes x\cdot
g);$$
$$S(v_{h})=v_{h^{-1}},\;\;\;\;S(x)=-f\cdot x \cdot d.$$
The following conclusion is due to Green and Solberg (see Theorem
3.3 of \cite{G2}).
\begin{lemma} Suppose that $k\Gamma_{G}(W)$ has an allowable
$kG$-bimodule structure, then $k\Gamma_{G}(W)$ is a Hopf algebra
with counit $\varepsilon$, comultiplication $\Delta$ and antipode
$S$ given above.\end{lemma}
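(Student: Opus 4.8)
The plan is to exhibit $k\Gamma_G(W)$, equipped with the stated $\varepsilon$, $\Delta$ and $S$, as a Hopf algebra by checking the bialgebra and antipode axioms. The essential simplification is that $k\Gamma_G(W)$ is generated as an algebra by its degree-zero part $\mathrm{span}\{v_g\}_{g\in G}$ together with its degree-one part spanned by the arrows, and the only defining relations are the orthogonal-idempotent relations $v_gv_h=\delta_{g,h}v_g$, $\sum_g v_g=1$, and the incidence relations $v_f x=x=x v_d$ (with $v_{f'}x=0$, $xv_{d'}=0$ otherwise) for $x\in V^d_f$. Since $\Delta$ and $\varepsilon$ are prescribed on these generators, it suffices to check each axiom on vertices and arrows. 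I note at the outset that the restriction of $\Delta$, $\varepsilon$, $S$ to the subalgebra $\mathrm{span}\{v_g\}$ is exactly the standard Hopf structure of the dual group algebra $(kG)^\ast$, so every vertex computation is immediate and the work is concentrated on the arrows.

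First I would verify that $\Delta$ and $\varepsilon$ are well-defined algebra homomorphisms, i.e. that they respect the relations above. For $\varepsilon$ this is routine, since it is the augmentation picking out the coefficient of $v_e$ and killing all positive-length paths. For $\Delta$ the key point is that the incidence relations are preserved, and this is precisely what allowability buys us: condition (ii) locates $g\cdot x\in V^{dg^{-1}}_{fg^{-1}}$ and $x\cdot g\in V^{g^{-1}d}_{g^{-1}f}$, so that $\Delta(v_f)\,\Delta(x)=\Delta(x)=\Delta(x)\,\Delta(v_d)$ holds term by term, while the vertex part $\Delta(v_h)=\sum_g v_{hg^{-1}}\otimes v_g$ is multiplicative by a direct computation in $G$. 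Hence $\Delta\colon k\Gamma_G(W)\to k\Gamma_G(W)\otimes k\Gamma_G(W)$ and $\varepsilon$ are algebra maps.

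Next I would establish coassociativity and the counit axiom. On vertices both reduce to the coalgebra structure of $(kG)^\ast$. On an arrow $x\in V^d_f$ the counit axiom is a one-line check using $\varepsilon(\text{arrow})=0$, $\varepsilon(v_g)=\delta_{g,e}$ and $e\cdot x=x\cdot e=x$. For coassociativity I would expand both $(\Delta\otimes\mathrm{id})\Delta(x)$ and $(\mathrm{id}\otimes\Delta)\Delta(x)$ into three families of terms each (of shapes arrow--vertex--vertex, vertex--arrow--vertex, vertex--vertex--arrow) and match them by reindexing the summation group variables; the identifications are forced by the module axioms $h\cdot(g\cdot x)=(hg)\cdot x$, $(x\cdot g)\cdot h=x\cdot(gh)$ together with the commuting of the left and right $G$-actions. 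This shows $k\Gamma_G(W)$ is a bialgebra.

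Finally, the antipode. On vertices one computes $\sum_g S(v_{hg^{-1}})v_g=\sum_g v_{gh^{-1}}v_g=\delta_{h,e}\sum_g v_g=\varepsilon(v_h)1$, using only group inversion. The delicate step, which I expect to be the main obstacle, is the arrow case: in $m(S\otimes\mathrm{id})\Delta(x)=\sum_g S(g\cdot x)\,v_g+\sum_g v_{g^{-1}}(x\cdot g)$ one must track, via condition (ii), that $S(g\cdot x)\in V^{gf^{-1}}_{gd^{-1}}$, so that each path-algebra product is governed by the matching of sources and targets. When $f\neq e$ every product vanishes for incidence reasons, giving $0=\varepsilon(x)1$; when $f=e$ both families survive and, after reindexing (by $h=dg^{-1}$ and $h=g$ respectively), cancel against one another. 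The symmetric identity $m(\mathrm{id}\otimes S)\Delta(x)=\varepsilon(x)1$ is handled the same way. The real content here is the bookkeeping of source/target indices under the two $G$-actions; this is the heart of the argument, while everything else is formal.
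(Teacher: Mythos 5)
Your proposal addresses a statement that the paper itself never proves: this lemma is quoted from Green and Solberg (it is Theorem 3.3 of \cite{G2}), and the paper simply cites it. So there is no in-paper argument to compare against; judged on its own terms, your direct verification is correct and is the natural (presumably the original) line of proof. Your reduction is sound where it is formal: the path algebra is the tensor algebra of the arrow bimodule over $k\Gamma_{G}(W)_{0}\cong (kG)^{\ast}$, whose only relations are the idempotent and incidence relations you list, so $\Delta$ and $\varepsilon$ extend to algebra maps exactly when they respect those relations (your condition-(ii) check that $g\cdot x\in V^{dg^{-1}}_{fg^{-1}}$, $x\cdot g\in V^{g^{-1}d}_{g^{-1}f}$ does this), and coassociativity and counitality are equalities of algebra homomorphisms, hence may legitimately be checked on vertices and arrows. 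Your antipode computation is also exactly right, including the observation that $S(g\cdot x)\in V^{gf^{-1}}_{gd^{-1}}$ forces everything to vanish unless $f=e$, and the cancellation of the two surviving families under the reindexings $h=dg^{-1}$ and $h=g$.

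One step in your plan does need to be made explicit, because your blanket claim that ``it suffices to check each axiom on vertices and arrows'' is not automatic for the antipode: the maps $m(S\otimes \mathrm{id})\Delta$ and $h\mapsto\varepsilon(h)1$ are not algebra homomorphisms, so agreement on generators does not propagate by the universal property the way it does for coassociativity. The standard repair has two parts. First, check that $S$ is a well-defined algebra anti-endomorphism of $k\Gamma_{G}(W)$; this requires $S(x)=-f\cdot x\cdot d\in V^{f^{-1}}_{d^{-1}}$ for $x\in V^{d}_{f}$, which is precisely the $g=e$ instance of the incidence bookkeeping you already perform. Second, use that for an anti-homomorphism $S$ on a bialgebra, the set of elements on which both antipode identities hold is a subalgebra: if they hold at $h$ and $h'$, then
$$\sum S\bigl(h_{(1)}h'_{(1)}\bigr)h_{(2)}h'_{(2)}=\sum S\bigl(h'_{(1)}\bigr)S\bigl(h_{(1)}\bigr)h_{(2)}h'_{(2)}=\sum S\bigl(h'_{(1)}\bigr)\varepsilon(h)h'_{(2)}=\varepsilon(h)\varepsilon(h')1,$$
and symmetrically for $m(\mathrm{id}\otimes S)\Delta$. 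With these two observations inserted, your generator-level checks do establish the Hopf algebra structure, and the proof is complete.
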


Let $H$ be a basic Hopf algebra, then its Jacobson radical $J_{H}$
is a Hopf ideal (see Lemma 1.1 in \cite{G2}). Hence $H/J_{H}\cong
(kG)^{\ast}$ for some finite group $G$ with counit $\varepsilon'$,
comultiplication $\Delta'$ and antipode $S'$ given in terms of the
dual basis $\{p_{g}\}_{g\in G}$ in $(kG)^{\ast}$ in the following
way:
$$\varepsilon'(p_{h})=\left \{
\begin{array}{ll} 1, & \;\;\textmd{if}\; h=e\\
0 & \;\;\textrm{otherwise}
\end{array}\right.;$$
$$\Delta'(p_{h})=\sum_{g\in G}p_{hg^{-1}}\otimes p_{g};$$
$$S'(v_{h})=v_{h^{-1}}.$$
The set $\{p_{g}\}_{g\in G}$ of primitive orthogonal idempotents in
$H/J_{H}$ can be lifted to a set of primitive orthogonal idempotents
$\{v_{g}\}_{g\in G}$ in $H$. Since $H^{\ast}$ can act on $H$
naturally (see \cite{actions}), $kG$ can act on $H$ now. Using this
it follows from the action of $kG$ on $H$ that $g\cdot
v_{f}=v_{fg^{-1}}$ and $v_{f}\cdot g=v_{g^{-1}f}$ modulo the
radical. Combining Lemma 1.2 and Lemma 2.1 in \cite{G2}, we have the
following result.

\begin{lemma} Let $H$ be a basic Hopf algebra. With the notations
above, the following assertions hold.

(a) The counit $\varepsilon$ for $H$ is given by
$$\varepsilon(v_{h})=\left \{
\begin{array}{ll} 1, & \;\;\textmd{if}\; h=e\\
0 & \;\;\textrm{otherwise}
\end{array}\right.$$
for all $h\in G$ and $\varepsilon(J_{H})=0$;

(b) The comultiplication for $H$ is given by
$$\Delta(v_{h})=\sum_{g\in G}v_{hg^{-1}}\otimes v_{g}$$
modulo $J_{H\otimes H}$ and for $x\in v_{f}J_{H}/J_{H}^{2} v_{d}$,
$$\Delta(x)=\sum_{g\in G}(g\cdot x\otimes v_{g}+ v_{g}\otimes x\cdot
g)$$ modulo $J_{H\otimes H}^{2}$.
\end{lemma}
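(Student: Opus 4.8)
The plan is to derive both assertions from the single structural fact already available, namely that $J_H$ is a Hopf ideal, so that $\pi\colon H\to H/J_H\cong(kG)^{\ast}$ is a surjection of Hopf algebras sending $v_h$ to the dual-basis idempotent $p_h$, together with the description of the $kG$-action on $H$ recalled just before the statement. Part (a) is immediate from this: since $J_H$ is a Hopf ideal we have $\varepsilon(J_H)=0$, which is the second claim, and the counit of $H$ factors as $\varepsilon=\varepsilon'\circ\pi$ with $\varepsilon'$ the counit of $(kG)^{\ast}$; hence $\varepsilon(v_h)=\varepsilon'(p_h)$, and the displayed formula is exactly the formula for $\varepsilon'$ recalled above.

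For the first formula in (b) I would reduce $\Delta$ modulo the radical. Because $k$ is algebraically closed and $H/J_H\cong(kG)^{\ast}$ is semisimple (hence separable), the radical of $H\otimes H$ is $J_{H\otimes H}=J_H\otimes H+H\otimes J_H$, so $(H\otimes H)/J_{H\otimes H}\cong H/J_H\otimes H/J_H$ and $\Delta$ descends to the comultiplication $\Delta'$ of $(kG)^{\ast}$ on this quotient. Reading off $\Delta'(p_h)=\sum_{g}p_{hg^{-1}}\otimes p_g$ and lifting through $\pi\otimes\pi$ gives $\Delta(v_h)\equiv\sum_{g}v_{hg^{-1}}\otimes v_g$ modulo $J_{H\otimes H}$.

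The substance of the lemma is the arrow formula, which I would compute modulo $J_{H\otimes H}^{2}=J_H^{2}\otimes H+J_H\otimes J_H+H\otimes J_H^{2}$. Let $x\in v_fJ_H/J_H^{2}v_d$. Since $\Delta(J_H)\subseteq J_{H\otimes H}$, the component of $\Delta(x)$ in $H/J_H\otimes H/J_H$ vanishes, so modulo $J_{H\otimes H}^{2}$ only the two ``degree-one'' components survive, and using that $\{v_g\}$ is a basis of $H/J_H$ we may write $\Delta(x)\equiv\sum_{g'}a_{g'}\otimes v_{g'}+\sum_{g'}v_{g'}\otimes b_{g'}$ with $a_{g'},b_{g'}\in J_H/J_H^{2}$. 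To identify the coefficients I would exploit the $kG$-action: viewing $g\in kG$ as the functional on $H$ with $g(J_H)=0$ and $g(v_{g'})=\delta_{g,g'}$, the actions recalled above are realized as $g\cdot x=(\mathrm{id}\otimes g)\Delta(x)$ and $x\cdot g=(g\otimes\mathrm{id})\Delta(x)$ (this is the convention under which $g\cdot v_f=v_{fg^{-1}}$ and $v_f\cdot g=v_{g^{-1}f}$). Applying $\mathrm{id}\otimes g$ kills the second sum, since $g$ vanishes on $J_H\ni b_{g'}$, and extracts $a_g$ from the first, since $g(v_{g'})=\delta_{g,g'}$; thus $a_g=g\cdot x$ modulo $J_H^{2}$, and symmetrically $b_g=x\cdot g$. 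Substituting yields $\Delta(x)\equiv\sum_{g}(g\cdot x\otimes v_g+v_g\otimes x\cdot g)$ modulo $J_{H\otimes H}^{2}$.

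I expect the only genuine obstacle to lie in the bookkeeping of this last step. One must justify the decomposition of $(H\otimes H)/J_{H\otimes H}^{2}$ into its three graded pieces, which rests on the identity $J_{H\otimes H}=J_H\otimes H+H\otimes J_H$ (valid here because $H/J_H$ is separable); confirm that the functionals $g$ lifted from $(H/J_H)^{\ast}=kG$ genuinely implement the $kG$-action from which $g\cdot v_f=v_{fg^{-1}}$ and $v_f\cdot g=v_{g^{-1}f}$ were obtained; and check that applying $\mathrm{id}\otimes g$ and $g\otimes\mathrm{id}$ is compatible with passing to the quotient $J_H/J_H^{2}$, so that the equalities $a_g=g\cdot x$ and $b_g=x\cdot g$ hold exactly in $J_H/J_H^{2}$ rather than merely up to higher-order terms.
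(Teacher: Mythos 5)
Your proposal is correct, but there is nothing in the paper to compare it against line by line: the paper gives no proof of this lemma at all, presenting it as a quoted result obtained by ``combining Lemma 1.2 and Lemma 2.1 in \cite{G2}'' after recalling that $J_{H}$ is a Hopf ideal and that $kG\cong (H/J_{H})^{\ast}$ acts naturally on $H$. What you have done is reconstruct, self-containedly, the content hidden behind that citation, and your ingredients are the right ones: (i) since $J_{H}$ is a Hopf ideal, $\varepsilon$ and $\Delta$ descend to the quotient Hopf algebra $H/J_{H}\cong (kG)^{\ast}$, which gives (a) and the vertex formula in (b); (ii) $J_{H\otimes H}=J_{H}\otimes H+H\otimes J_{H}$, which holds because $H/J_{H}\cong k^{|G|}$ is split semisimple (hence separable in any characteristic, so this step does not even need the standing assumption $\mathrm{char}\,k=0$); (iii) the realization $g\cdot x=(\mathrm{id}\otimes g)\Delta(x)$, $x\cdot g=(g\otimes\mathrm{id})\Delta(x)$ of the natural $H^{\ast}$-action of \cite{actions}, for $g$ viewed as a functional on $H$ killing $J_{H}$, normalized exactly so that $g\cdot v_{f}=v_{fg^{-1}}$ and $v_{f}\cdot g=v_{g^{-1}f}$ as the paper states. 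The three bookkeeping points you flag at the end all go through, and your argument is in fact more robust than you suggest: you never need uniqueness (directness) of the degree-one decomposition of $J_{H\otimes H}/J_{H\otimes H}^{2}$, only the existence of some expression $\Delta(x)\equiv \sum_{g'}a_{g'}\otimes v_{g'}+\sum_{g'}v_{g'}\otimes b_{g'}$, which follows from $\Delta(x)\in J_{H}\otimes H+H\otimes J_{H}$ together with the vector-space splitting $H=\bigl(\bigoplus_{g}kv_{g}\bigr)\oplus J_{H}$ (available here without Wedderburn--Malcev, since the $v_{g}$ are orthogonal idempotents whose images span $H/J_{H}$) and $J_{H}\otimes J_{H}\subseteq J_{H\otimes H}^{2}$; then the inclusions $(\mathrm{id}\otimes g)(J_{H\otimes H}^{2})\subseteq J_{H}^{2}$ and $(g\otimes\mathrm{id})(J_{H\otimes H}^{2})\subseteq J_{H}^{2}$ (immediate because $g$ kills $J_{H}$) force $a_{g}\equiv g\cdot x$ and $b_{g}\equiv x\cdot g$ modulo $J_{H}^{2}$, and substituting these representatives changes the expression only by elements of $J_{H}^{2}\otimes H+H\otimes J_{H}^{2}\subseteq J_{H\otimes H}^{2}$. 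The one small point worth adding if you wrote this up is that $\Delta$ is well defined on classes in $J_{H}/J_{H}^{2}$, i.e. $\Delta(J_{H}^{2})\subseteq J_{H\otimes H}^{2}$, which is immediate since $\Delta$ is an algebra map with $\Delta(J_{H})\subseteq J_{H\otimes H}$. In short: the paper's citation buys brevity; your argument buys a self-contained, convention-transparent proof that makes explicit where splitness of $H/J_{H}$ and the identification of the $kG$-action with the hit actions enter -- precisely the points the paper leaves implicit.
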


Denote gr$H=H/J_{H}\oplus J_{H}/J_{H}^{2}\oplus\cdots $ the
radically graded algebra of $H$. By Lemma 5.1 of \cite{L}, it is
also a Hopf algebra. Now we can give the Gabriel's theorem for basic
Hopf algebras, which is indeed dual to Theorem 4.5 of \cite{FP}.

\begin{lemma} Let $H$ be a basic Hopf algebra and $\Gamma_{G}(W)$
its Gabriel quiver. Then there is a Hopf algebra surjection
$$\pi:\;\;k\Gamma_{G}(W)\longrightarrow\; \emph{gr}H$$
with $Ker \pi$  an admissible Hopf ideal of
$k\Gamma_{G}(W)$.\end{lemma}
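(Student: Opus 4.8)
The plan is to construct $\pi$ first as a surjection of algebras and then to upgrade it to a morphism of Hopf algebras by equipping the path algebra $k\Gamma_G(W)$ with the Green--Solberg Hopf structure of Lemma 2.3. Since $\gr H$ is radically graded, it is generated as an algebra by its degree-zero part $H/J_H\cong(kG)^{\ast}$ and its degree-one part $J_H/J_H^2$; these correspond respectively to the vertices and the arrows of the Gabriel quiver of $\gr H$, which coincides with that of $H$ and, by Lemma 2.2, is the covering quiver $\Gamma_G(W)$. Hence the classical Gabriel theorem for basic algebras first furnishes an algebra surjection $\pi\colon k\Gamma_G(W)\to\gr H$ whose kernel $\widetilde I$ is admissible, so that $\widetilde I\subseteq J^2$; in particular $\pi$ is a graded map restricting to an isomorphism in degrees $0$ and $1$.

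Next I would transport a Hopf structure across this low-degree isomorphism. The group algebra $kG\subseteq H^{\ast}$ acts on $H$, and since each $g\in G$ is group-like in $H^{\ast}$ it acts by an algebra automorphism; this action descends to $\gr H$. By Lemma 2.4 one has $g\cdot v_f=v_{fg^{-1}}$ and $v_f\cdot g=v_{g^{-1}f}$, so for an arrow $x$ with idempotent decomposition $x=v_f x v_d$ the automorphism gives $g\cdot x=v_{fg^{-1}}\,(g\cdot x)\,v_{dg^{-1}}\in v_{fg^{-1}}(\gr H)_1v_{dg^{-1}}$, and symmetrically on the right. Using the degree-one isomorphism of the previous step I pull this $kG$-bimodule structure back to the arrow space of $k\Gamma_G(W)$ and extend it to the whole path algebra by algebra automorphisms. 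Conditions (i) and (ii) for an allowable $kG$-bimodule are then precisely the two displayed containments, so Lemma 2.3 endows $k\Gamma_G(W)$ with a Hopf algebra structure.

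It remains to verify that $\pi$ is a bialgebra map. The crucial point here is that the comultiplication formulas of Lemma 2.4, which hold only modulo $J_{H\otimes H}$ and $J_{H\otimes H}^2$ in $H$ itself, sharpen to exact identities in $\gr H$: the comultiplication of $\gr H$ is homogeneous, so $\Delta\big((\gr H)_1\big)\subseteq (\gr H)_0\otimes(\gr H)_1\oplus(\gr H)_1\otimes(\gr H)_0$, and the truncated formulas record exactly this graded component. Consequently the counit and comultiplication on the degree-zero and degree-one generators of $\gr H$ agree with the Green--Solberg formulas attached to the generators of $k\Gamma_G(W)$; since $\pi$ is an algebra map and these generate everything, $\pi$ intertwines both counits and both comultiplications, and the antipode is then automatically preserved. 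Thus $\pi$ is a surjection of Hopf algebras and $\widetilde I=\operatorname{Ker}\pi$ is simultaneously admissible and a Hopf ideal. I expect the main obstacle to lie in this final compatibility check---namely, confirming that the ``modulo radical'' identities of Lemma 2.4 really do become exact after passing to the associated graded, and that the transported action is genuinely allowable on all of $k\Gamma_G(W)$ rather than merely on arrows---everything else being a direct appeal to Gabriel's theorem together with Lemmas 2.3 and 2.4.
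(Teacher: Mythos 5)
Your proposal is correct and follows essentially the same route as the paper: equip $k\Gamma_G(W)$ with the Green--Solberg Hopf structure of Lemma 2.3 by transporting the $kG$-bimodule structure from $J_H/J_H^2$ (via Lemma 2.4) to the arrow space and extending multiplicatively, obtain the algebra surjection from Gabriel's theorem, and then verify the coalgebra compatibility only on vertices and arrows, concluding by the fact that the path algebra is a tensor algebra generated in degrees $0$ and $1$. The paper phrases this last step as the equality of the two algebra maps $\Delta\pi$ and $(\pi\otimes\pi)\Delta$ via the universal property of the tensor algebra, which is exactly your generation argument.
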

\begin{proof} We use the notations above. At first, we must equip
$k\Gamma_{G}(W)$ with a Hopf structure. By Lemma 2.3, it is enough
to give an allowable $kG$-bimodule structure on $k\Gamma_{G}(W)$.
Indeed, for any vertex $v_{f}$, define $g\cdot v_{f}=v_{fg^{-1}}$
and $v_{f}\cdot g=v_{g^{-1}f}$. Transporting the left and right
actions of $kG$ on $v_{f}J_{H}/J_{H}^{2} v_{d}$ to the $k$-space
with the basis of all arrows from $v_{d}$ to $v_{f}$, we get the
left and right actions of $kG$ on paths of length 1. For a path
$p=\alpha_{n}\cdots \alpha_{1}$ of length $n$, define
$$g\cdot p=(g\cdot \alpha_{n})\cdots (g\cdot \alpha_{1}),\;\;\;\;
p\cdot g=(\alpha_{n}\cdot g)\cdots (\alpha_{1}\cdot g).$$ Thus we
get an allowable $kG$-bimodule structure on $k\Gamma_{G}(W)$ now and
the Hopf structure on $k\Gamma_{G}(W)$ is given through the way as
in the Lemma 2.3.

By the Gabriel's theorem, $\pi$ is an algebra surjection. We only
need to show that it is also a coalgebra map, i.e.
$\Delta\pi=(\pi\otimes \pi)\Delta$. Set $\phi_{1}=\Delta\pi$ and
$\phi_{2}=(\pi\otimes \pi)\Delta$. By Lemma 2.4, we have
$$\phi_{1}|_{k\Gamma_{G}(W)_{0}}=\phi_{2}|_{k\Gamma_{G}(W)_{0}},\;\;\;\;\phi_{1}|_{k\Gamma_{G}(W)_{1}}
=\phi_{2}|_{k\Gamma_{G}(W)_{1}}$$ where $k\Gamma_{G}(W)_{0}$ and
$k\Gamma_{G}(W)_{1}$ denote the k-spaces spanned by all vertices and
all arrows respectively. It is well-known that the path algebra is
indeed a tensor algebra. Using the universal property of tensor
algebra, we know that every algebra morphism $f$ from the path
algebra $k\Gamma$ is  determined uniquely by $f|_{k\Gamma_{0}}$ and
$f|_{k\Gamma_{1}}$. Thus $\phi_{1}=\phi_{2}$.
\end{proof}

Notice the difference between  Lemmas 2.2 and 2.5. Lemma 2.5 tells
us that the algebra isomorphism given in Lemma 2.2 can be
strengthened to be a Hopf isomorphism when the basic Hopf algebra is
radically graded. We also need Proposition 4.4 of \cite{G2}.

\begin{lemma} Let $k\Gamma_{G}(W)$ be a Hopf algebra with Hopf
structure given by an allowable $kG$-bimodule structure. Let
$I\subset k\Gamma_{G}(W)$ be a Hopf ideal which is admissible. Then
$I$ is stable under left and right $G$-actions.\end{lemma}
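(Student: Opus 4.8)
The plan is to realize both the left and right $G$-actions on $H := k\Gamma_G(W)$ as algebraic translation operators built from the comultiplication, so that the defining property of a Hopf ideal forces $I$ to be stable. For each $g \in G$ I would introduce the linear functional $\chi_g \in H^{*}$ determined by $\chi_g(v_h) = \delta_{h,g}$ on vertices and $\chi_g(p) = 0$ on every path $p$ of length $\geq 1$. The first step is to check that $\chi_g$ is a character, i.e.\ an algebra homomorphism $H \to k$: any product of two paths is again a path (or zero) whose length is the sum of the lengths, so both $\chi_g(ab)$ and $\chi_g(a)\chi_g(b)$ vanish as soon as one factor has positive length, while on the vertices $\chi_g$ is exactly the algebra character of $(kG)^{*}$ dual to $g$; moreover $\chi_g(1) = \chi_g(\sum_h v_h) = 1$. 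Hence $\chi_g$ is multiplicative.

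Once $\chi_g$ is a character, the operators $L_g := (\operatorname{id}\otimes\chi_g)\circ\Delta$ and $R_g := (\chi_g\otimes\operatorname{id})\circ\Delta$ are algebra endomorphisms of $H$: since $\chi_g(ab) = \chi_g(a)\chi_g(b)$, a short Sweedler-notation computation gives $L_g(ab)=L_g(a)L_g(b)$, and likewise for $R_g$. The second step evaluates $L_g$ and $R_g$ on generators using the explicit coproduct of Lemma 2.3. On a vertex, $(\operatorname{id}\otimes\chi_g)\Delta(v_h)=\sum_t v_{ht^{-1}}\chi_g(v_t)=v_{hg^{-1}}=g\cdot v_h$; on an arrow $x\in V^{d}_{f}$ the summand $v_t\otimes x\cdot t$ of $\Delta(x)$ is killed by $\operatorname{id}\otimes\chi_g$ (as $x\cdot t$ has length $1$), leaving $(\operatorname{id}\otimes\chi_g)\Delta(x)=\sum_t(t\cdot x)\chi_g(v_t)=g\cdot x$. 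The symmetric computation yields $R_g(v_h)=v_{g^{-1}h}=v_h\cdot g$ and $R_g(x)=x\cdot g$.

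Because the path algebra is a tensor algebra, the given left and right $G$-actions are themselves algebra endomorphisms (they are defined on paths multiplicatively in the proof of Lemma 2.5), and $L_g,R_g$ are algebra endomorphisms agreeing with them on all vertices and arrows; by the universal property of the tensor algebra used in Lemma 2.5 this forces $g\cdot h=L_g(h)$ and $h\cdot g=R_g(h)$ for every $h\in H$. The final step invokes the hypotheses on $I$: admissibility gives $I\subseteq J^{2}\subseteq J$, and $\chi_g$ vanishes on $J$ (it kills every path of positive length), so $\chi_g(I)=0$. From $\Delta(I)\subseteq I\otimes H+H\otimes I$ we then obtain $g\cdot I=(\operatorname{id}\otimes\chi_g)\Delta(I)\subseteq (\operatorname{id}\otimes\chi_g)(I\otimes H)+(\operatorname{id}\otimes\chi_g)(H\otimes I)\subseteq I+H\,\chi_g(I)=I$, and symmetrically $I\cdot g=R_g(I)\subseteq I$.

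I expect the only genuine obstacle to be the conceptual step of packaging the combinatorial $G$-action as translation by the character $\chi_g$; once that reinterpretation is in place, checking that $\chi_g$ is multiplicative and that it annihilates the admissible ideal is routine, and stability drops out of the Hopf-ideal identity $\Delta(I)\subseteq I\otimes H+H\otimes I$. The point to handle with care is that both $L_g$ (resp.\ $R_g$) and the prescribed $G$-action must be verified to be algebra maps \emph{before} comparing them on generators, since $(\operatorname{id}\otimes\chi_g)\Delta$ is multiplicative precisely because $\chi_g$ is a character.
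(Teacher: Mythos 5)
Your proof is correct, but there is nothing in the paper to compare it with: the paper never proves this lemma, it simply quotes it as Proposition 4.4 of \cite{G2} (``We also need Proposition 4.4 of \cite{G2}''). Judged on its own merits, your argument is complete and is essentially the natural one: realizing $g\cdot(-)$ and $(-)\cdot g$ as $L_g=(\mathrm{id}\otimes\chi_g)\Delta$ and $R_g=(\chi_g\otimes\mathrm{id})\Delta$ amounts to extracting the vertex (degree-zero) component of the second, respectively first, tensor leg of the coproduct, and then the Hopf-ideal condition $\Delta(I)\subseteq I\otimes H+H\otimes I$ together with $\chi_g(I)=0$ (which follows from admissibility, $I\subseteq J^2\subseteq J$) yields stability. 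The individual steps all check out: multiplicativity of $\chi_g$ is a bilinear condition, so verifying it on the path basis suffices; $L_g,R_g$ are then algebra endomorphisms because $\Delta$ is an algebra map; they agree with the prescribed actions on vertices and arrows by the explicit formulas of Lemma 2.3; and two algebra endomorphisms of a path algebra agreeing on vertices and arrows coincide, since these generate the algebra. The one point deserving emphasis is the step where you identify $L_g$ with $g\cdot(-)$ on \emph{all} of $k\Gamma_G(W)$: this requires the prescribed $G$-action to act multiplicatively on paths of length at least $2$, whereas the paper's formal definition of an allowable $kG$-bimodule structure only constrains the action on vertices and arrows. You cite exactly the right place (the multiplicative extension used in the proof of Lemma 2.5), and indeed without that multiplicativity the statement of the lemma would not even be well-posed, since the action on longer paths would be unconstrained; so this is a looseness in the paper's definition rather than a gap in your argument, but it is worth stating as a standing hypothesis rather than an afterthought.
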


 At the end of this section, we give the definition of
 representation type. An algebra $A$ is said to be of \emph{finite representation type} provided
there are finitely many non-isomorphic indecomposable $A$-modules.
$A$ is of \emph{tame  type} or $A$ is a \emph{tame} algebra if $A$
is not of finite representation type, whereas for any dimension
$d>0$, there are finite number of $A$-$k[T]$-bimodules $M_{i}$ which
are free of finite rank as right $k[T]$-modules such that all but a
finite number of indecomposable $A$-modules of dimension $d$ are
isomorphic to $M_{i}\otimes_{k[T]}k[T]/(T-\lambda)$ for $\lambda\in
k$. We say that $A$ is of \emph{wild type} or $A$ is a \emph{wild}
algebra  if there is a finitely generated $A$-$k<X,Y>$-bimodule $B$
which is free as a right $k<X,Y>$-module such that the functor
$B\otimes_{k<X,Y>}-\;\;$ from mod-$k<X,Y>$, the category of finitely
generated $k<X,Y>$-modules, to mod-$A$, the category of finitely
generated $A$-modules, preserves indecomposability and reflects
isomorphisms. See
\cite{Drozd}\cite{DS}\cite{Karin}\cite{Craw}\cite{SS} for
 more details and in particular \cite{GNRSV} for geometric characterization of
 the tameness of algebras.

\section{Some Combinatorial Relations }
For our purpose, we need to consider the following combinatorial
functions: \begin{equation*} H_{1}(m,l, t)=\sum_{0\leq m_{1}\leq
m_{2}\leq \ldots \leq m_{l}\leq
m-l}t^{\sum_{i=1}^{l}m_{i}},\end{equation*}

 \begin{equation*}
\;\;\;\;H_{2}(m,l, t)=\sum_{0\leq n_{1}+n_{2}+\cdots + n_{l}\leq
m-l}t^{\sum_{i=1}^{l}(l+1-i)n_{i}},\end{equation*}
\begin{eqnarray*}
\;\;\;\;\;\;\;\;\;\;\;\;H_{3}(m,l, t)&=&t^{m-l}\sum_{0\leq n_{1}+
n_{2}+ \cdots + n_{l-1}\leq
m-l}t^{\sum_{i=1}^{l-1}(l-i)n_{i}}\\
&+& \sum_{0\leq n_{1}+n_{2}+\cdots + n_{l}\leq
m-l-1}t^{\sum_{i=1}^{l}(l+1-i)n_{i}}. \end{eqnarray*}
 Here $m,l\in
\mathbb{Z}^{+},\;0< l< m,\;m_{1},\ldots,m_{l}, n_{1},\ldots,n_{l}\in
\mathbb{N}$ and $t$ is an indeterminant.

\begin{lemma} $H_{1}(m,l, t)=H_{2}(m,l, t)=H_{3}(m,l, t)$\end{lemma}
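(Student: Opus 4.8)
The plan is to prove the two equalities $H_1 = H_2$ and $H_2 = H_3$ separately, each by an elementary change of summation variables together with a splitting of the index set; no generating-function machinery will be needed.

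First I would establish $H_1 = H_2$ by the substitution that records the consecutive gaps of a weakly increasing sequence. Given indices $0 \leq m_1 \leq m_2 \leq \cdots \leq m_l \leq m-l$ appearing in $H_1$, set $m_0 = 0$ and $n_j = m_j - m_{j-1}$ for $1 \leq j \leq l$. Then the $n_j$ range independently over $\mathbb{N}$, the monotonicity $m_1 \leq \cdots \leq m_l$ is exactly the condition $n_j \geq 0$, and the boundary constraint $m_l \leq m-l$ becomes $n_1 + \cdots + n_l \leq m-l$; thus this is a bijection between the index set of $H_1$ and that of $H_2$. Reindexing the exponent gives
$$\sum_{i=1}^l m_i = \sum_{i=1}^l \sum_{j=1}^i n_j = \sum_{j=1}^l (l+1-j)\, n_j,$$
which is precisely the exponent appearing in $H_2$. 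Hence $H_1 = H_2$.

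Next I would prove $H_2 = H_3$ by splitting the range $0 \leq n_1 + \cdots + n_l \leq m-l$ of $H_2$ according to whether $n_1 + \cdots + n_l = m-l$ or $n_1 + \cdots + n_l \leq m-l-1$. The strict part is literally the second summand of $H_3$, so nothing needs to be done there. For the extremal part, where $n_1 + \cdots + n_l = m-l$, I would eliminate $n_l$ by writing $n_l = (m-l) - (n_1 + \cdots + n_{l-1})$; this is admissible precisely when $n_1 + \cdots + n_{l-1} \leq m-l$, which is exactly the summation region of the first summand of $H_3$. Substituting into the exponent yields
$$\sum_{i=1}^l (l+1-i)\, n_i = (m-l) + \sum_{i=1}^{l-1} (l-i)\, n_i,$$
so each extremal term contributes $t^{m-l}\, t^{\sum_{i=1}^{l-1}(l-i)n_i}$, and summing over $n_1 + \cdots + n_{l-1} \leq m-l$ reproduces the first summand of $H_3$. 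Combining the two parts gives $H_2 = H_3$, completing the proof.

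The computations are routine; the only points demanding care are the bookkeeping in the two exponent identities and the verification that the variable changes are genuine bijections respecting the constraint regions — in particular that the elimination of $n_l$ in the extremal part lands exactly on the index set $\{\, n_1 + \cdots + n_{l-1} \leq m-l \,\}$ of the first summand of $H_3$. I expect no conceptual obstacle beyond this index-chasing, and the degenerate case $l=1$ (where the inner sum over $1 \leq i \leq l-1$ is empty) can be checked directly to be consistent.
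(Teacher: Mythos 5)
Your proof is correct and follows essentially the same route as the paper: the equality $H_1=H_2$ is obtained by exactly the paper's change of variables (the paper writes $m_j=n_1+\cdots+n_j$, you write its inverse $n_j=m_j-m_{j-1}$), and $H_2=H_3$ by splitting off the extremal stratum $n_1+\cdots+n_l=m-l$, which is what the paper dismisses as ``not hard to see.'' The only difference is that you supply the index-chasing details the paper leaves implicit.
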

\begin{proof} It is not hard to see that $H_{2}(m,l,t)=H_{3}(m,l,t)$. Now we show
that $H_{1}(m,l,t)=H_{2}(m,l,t)$. Note that
$$t^{\sum_{i=1}^{l}(l+1-i)n_{i}}=t^{n_{1}+(n_{1}+n_{2})+(n_{1}+n_{2}+n_{3})+
\cdots + (n_{1}+n_{2}+\cdots +n_{l})}.$$ Let
$m_{1}=n_{1},m_{2}=n_{1}+n_{2},\ldots, m_{l}=n_{1}+n_{2}+\cdots
+n_{l}$, we can see $H_{1}(m,l,t)=H_{2}(m,l,t)$.
\end{proof}

 Professor Zhi-Wei Sun gives us the proof of the main result of this
 section.

\begin{proposition}  $H_{1}(m,l,t)=0$ for all $0<l< m$ if and only if
$t$ is an $m$-th primitive root of unity.\end{proposition}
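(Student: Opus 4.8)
The plan is to recognize the polynomial $H_1(m,l,t)$ as a Gaussian ($t$-)binomial coefficient and then read off its zeros from the cyclotomic factorization. First I would set up the standard bijection between the index set of $H_1$ and partitions in a box: a weakly increasing sequence $0 \le m_1 \le \cdots \le m_l \le m-l$ is exactly the datum of a partition $\lambda$ with at most $l$ parts, each part at most $m-l$, with $\sum_i m_i = |\lambda|$. Since the generating function $\sum_\lambda t^{|\lambda|}$ over partitions fitting in an $l \times (m-l)$ box is precisely the Gaussian binomial coefficient $\binom{m}{l}_t = \frac{[m]_t!}{[l]_t!\,[m-l]_t!}$, where $[n]_t = 1 + t + \cdots + t^{n-1}$ and $[n]_t! = [1]_t\cdots[n]_t$, I would conclude
$$H_1(m,l,t) = \binom{m}{l}_t.$$
This can be double-checked on small cases; for instance $H_1(4,2,t) = 1 + t + 2t^2 + t^3 + t^4 = \binom{4}{2}_t$.

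Next I would analyze the zeros of this polynomial through its cyclotomic factorization. Writing $t^n - 1 = \prod_{d \mid n} \Phi_d(t)$ gives $[n]_t = \prod_{d \mid n,\, d \ge 2} \Phi_d(t)$, and hence $[n]_t! = \prod_{d \ge 2} \Phi_d(t)^{\lfloor n/d \rfloor}$, since exactly $\lfloor n/d\rfloor$ of the factors $[1]_t,\ldots,[n]_t$ are divisible by $\Phi_d$. Therefore the multiplicity of $\Phi_d$ in $\binom{m}{l}_t$ equals
$$\nu_d = \left\lfloor \frac{m}{d}\right\rfloor - \left\lfloor \frac{l}{d}\right\rfloor - \left\lfloor \frac{m-l}{d}\right\rfloor \;\ge\; 0.$$
For the ``if'' direction, taking $d = m$ and using $0 < l < m$ yields $\nu_m = 1 - 0 - 0 = 1$, so $\Phi_m(t) \mid \binom{m}{l}_t$ for every $l$ with $0 < l < m$. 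Consequently, if $t$ is a primitive $m$-th root of unity, then $H_1(m,l,t) = 0$ for all such $l$.

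For the converse I would argue by contradiction on the order of $t$. Assuming $H_1(m,l,t) = 0$ for all $0 < l < m$, the case $l = 1$ reads $\binom{m}{1}_t = 1 + t + \cdots + t^{m-1} = 0$, which forces $t^m = 1$ with $t \ne 1$; let $d > 1$ be the exact order of $t$, so $d \mid m$. If $d < m$, I would exhibit one value of $l$ at which $H_1$ fails to vanish: taking $l = d$ (allowed since $0 < d < m$), the multiplicity formula gives $\nu_d = \tfrac{m}{d} - 1 - (\tfrac{m}{d} - 1) = 0$, so $\Phi_d \nmid \binom{m}{d}_t$. Since $t$ is a root of $\Phi_d$ but of no other cyclotomic polynomial, this yields $\binom{m}{d}_t \ne 0$ (concretely $\binom{m}{d}_t = m/d \ne 0$ in characteristic $0$), contradicting the hypothesis. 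Hence $d = m$, i.e. $t$ is a primitive $m$-th root of unity.

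The conceptual crux, and the step I expect to be the main obstacle, is the first one: correctly identifying $H_1$ with $\binom{m}{l}_t$ and justifying the box-partition bijection, after which both implications reduce to the elementary bookkeeping of the floor expression $\nu_d$. An alternative to the cyclotomic count in the converse is the $t$-analogue of Lucas' theorem, which directly gives $\binom{m}{d}_t = m/d$ at a primitive $d$-th root of unity; I would nonetheless prefer the cyclotomic valuation, since it settles both directions uniformly and requires no external theorem.
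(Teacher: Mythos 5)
Your proof is correct, but it takes a genuinely different route from the paper's. You identify $H_1(m,l,t)$ with the Gaussian binomial $\binom{m}{l}_t$ via the partitions-in-a-box bijection, and then read off the zeros from the cyclotomic factorization $\binom{m}{l}_t=\prod_{d\ge 2}\Phi_d(t)^{\nu_d}$ with $\nu_d=\lfloor m/d\rfloor-\lfloor l/d\rfloor-\lfloor (m-l)/d\rfloor$; both directions then reduce to the bookkeeping $\nu_m=1$ for $0<l<m$, and $\nu_d=0$ at $l=d$ when $t$ has order $d<m$. The paper instead substitutes $i_j=m_j+j$ to rewrite $H_1$ as a power of $t$ times the strictly increasing sum $\sum_{1\le i_1<\cdots<i_l\le m}t^{\sum_j i_j}$, i.e.\ the elementary symmetric polynomial $e_l(t,t^2,\dots,t^m)$, and evaluates the generating function $\prod_{r=1}^m(1+t^rx)$: when $t$ has order $d\mid m$ this product collapses to $(1-(-x)^d)^{m/d}$ by the factorization $y^d-1=\prod_r(y-\zeta_d^r)$, so all middle coefficients vanish exactly when $d=m$. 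The two mechanisms are linked by the $q$-binomial theorem (the $x^l$-coefficient of the paper's product is $t^{l(l+1)/2}\binom{m}{l}_t$), so in substance you prove the same vanishing statement, but by different means: the paper's argument is self-contained, needing only the factorization of $y^d-1$, and settles both directions with a single evaluation of the product; your argument imports (or re-proves) the box-partition identity and the cyclotomic structure of $[n]_t!$, but in exchange yields strictly more information, namely the exact multiplicity of every cyclotomic factor of $H_1(m,l,\cdot)$ -- hence all of its root-of-unity zeros for each fixed $l$ -- together with the concrete non-vanishing witness $\binom{m}{d}_{\zeta_d}=m/d$ in the converse.
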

\begin{proof} $``\Longleftarrow"\;\;$ For any $0<l< m$, let $i_{j}=m_{j}+j$, then
$$H_{1}(m,l,t)=\sum_{1\leq i_{1} <i_{2}< \ldots <i_{l}\leq m}t^{\sum_{j=1}^{l}(i_{j}-j)}
=t^{-\frac{l(l+1)}{2}}\sum_{1\leq i_{1} <i_{2}< \ldots <i_{l}\leq m}
t^{\sum_{j=1}^{l}i_{j}}.$$ Consider the generating function
$$\prod_{r=1}^{m}(1+t^{r}x)=(1+tx)(1+t^{2}x)\cdots (1+t^{m}x),$$
where $x$ is an indeterminant. On one hand,
$$\prod_{r=1}^{m}(1+t^{r}x)=1+\sum_{l=1}^{m}(\sum_{1\leq i_{1} <i_{2}< \ldots <i_{l}\leq m}
t^{\sum_{j=1}^{l}i_{j}})x^{l}.$$ On the other hand,
$$\prod_{r=1}^{m}(1+t^{r}x)=\prod_{r=1}^{m}(1-t^{r}(-x)).$$
By using a well-known identity
$y^{m}-1=\prod_{r=0}^{m-1}(y-\zeta_{m}^{r})$ for any indeterminant
$y$ and $m$-th primitive root of unity $\zeta_{m}$, we see that
$$\prod_{r=1}^{m}(1+t^{r}x)=\prod_{r=1}^{m}(1-t^{r}(-x))=1-(-x)^{m}.$$
Thus for all $0<l< m$, $$\sum_{1\leq i_{1} <i_{2}< \ldots <i_{l}\leq
m} t^{\sum_{j=1}^{l}i_{j}}=0$$ and thus $H_{1}(m,l,t)=0$.

$``\Longrightarrow"\;\;$
 Let $l=1$, the condition implies
that
$$\sum_{i=0}^{m-1}t^{i}=0.$$
Thus $1\neq t$ is an $m$-th root of unity. There is no harm to
assume that $t$ is a $d$-th primitive root of unity with $d|m$.
Consider the generating function again,
$$\prod_{s=1}^{m}(1+t^{s}x)=\prod_{q=0}^{\frac{m}{d}-1}\prod_{r=1}^{d}(1-t^{qd+r}(-x)).$$
Just like the proof of sufficient part, we have
$$\prod_{q=0}^{\frac{m}{d}-1}\prod_{r=1}^{d}(1-t^{qd+r}(-x))=\prod_{q=0}^{\frac{m}{d}-1}(1-(-x)^{d})=
(1-(-x)^{d})^{\frac{m}{d}}.$$ By the proof of sufficiency, if $d<m$,
there must exist an $l$ with $0<l<m$ such that $$\sum_{1\leq i_{1}
<i_{2}< \ldots <i_{l}\leq m} t^{\sum_{j=1}^{l}i_{j}}\neq 0$$ and
thus $H_{1}(m,l,t)\neq 0$. It's a contradiction. So
$d=m$.\end{proof}

\section{Classification--Connected Case}
The main result of \cite{L} is the following result:
\begin{lemma}Let $H$ be a basic Hopf algebra, then\emph{ gr}$H$ is tame if and only if \emph{gr}$H\cong k<x,y>/I\times
(kG)^{\ast}$ for some finite group $G$ and some ideal $I$ which is
one of the following forms:

(1): $I=(x^{2}-y^{2},\;yx-ax^{2},\;xy)\;\;\;\;$ for $0\neq a\in k$;

(2): $I=(x^{2},\;y^{2},\;(xy)^{m}-a(yx)^{m})\;\;\;\;$ for $0\neq
a\in k$ and $m\geq 1$;

(3): $I=(x^{n}-y^{n},\;xy,\;yx)\;\;\;\;$ for $n\geq 2$;

(4): $I=(x^{2},\;y^{2},\;(xy)^{m}x-(yx)^{m}y)\;\;\;\;$ for $m\geq
1$;

(5): $I=(yx-x^{2},\;y^{2})$
\end{lemma}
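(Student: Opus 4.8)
The plan is to separate the group part $(kG)^{\ast}$ from the ``local'' part, reduce to a classification of tame local graded Frobenius algebras on at most two generators, and then run a case analysis on the defining relations. First I would invoke the bosonization structure recalled in the introduction: since gr$H$ is a radically graded basic Hopf algebra whose degree zero part is $(kG)^{\ast}$, which is semisimple because $\operatorname{char}k=0$, the Radford--Majid theorem yields a decomposition gr$H\cong\Lambda\times(kG)^{\ast}$ in which $\Lambda$ is the subalgebra of $(kG)^{\ast}$-coinvariants, a connected graded braided Hopf algebra in $^{(kG)^{\ast}}_{(kG)^{\ast}}\mathcal{YD}$. Being connected graded, $\Lambda$ is local, and being a finite dimensional Hopf algebra in a braided category it is Frobenius, so $\Lambda$ is a local graded Frobenius algebra. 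The first key reduction is that gr$H$ and $\Lambda$ have the same representation type: as an algebra gr$H=\Lambda\#(kG)^{\ast}$, and the smash-product duality $(\Lambda\#(kG)^{\ast})\#kG\cong M_{|G|}(\Lambda)$, together with the separability of $kG$ and $(kG)^{\ast}$ in characteristic $0$, shows that gr$H$ is tame if and only if $\Lambda$ is tame. Equivalently, in the language of Lemma 2.5 the Gabriel quiver $\Gamma_{G}(W)$ is a finite Galois $G$-covering of the one-vertex quiver underlying $\Lambda$, and such a covering preserves the Drozd trichotomy.

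Next I would bound the number of generators $n=\dim\Lambda_{1}=|W|$. If $n\le 1$ then $\Lambda$ is $k$ or $k[x]/(x^{m})$, both of finite representation type, so gr$H$ cannot be tame. If $n\ge 3$, then the radical square zero quotient $\Lambda/J_{\Lambda}^{2}$ is the local algebra on $n$ loops, whose separated quiver is the $n$-Kronecker quiver; this is wild for $n\ge 3$, and wildness of a quotient forces $\Lambda$, hence gr$H$, to be wild. Therefore a tame gr$H$ must have $n=2$, that is $\Lambda\cong k\langle x,y\rangle/I$ for a homogeneous admissible ideal $I$, which is exactly the shape asserted in the statement.

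The remaining and decisive step is to determine, for $n=2$, precisely which homogeneous ideals $I$ make $k\langle x,y\rangle/I$ local, graded, Frobenius and tame; this is where I expect the real work and the main obstacle to lie. The Frobenius property forces the Hilbert series of $\Lambda$ to be palindromic with one-dimensional socle, so the top nonzero degree is one-dimensional and $\dim\Lambda_{d-1}=\dim\Lambda_{1}=2$, and these constraints pin down the relations degree by degree. I would then organise the analysis according to the degree two relations, noting that $\Lambda_{2}$ is a quotient of the four-dimensional space spanned by $x^{2},xy,yx,y^{2}$, and use the tame/wild dichotomy for the resulting special biserial and string type algebras to discard all but the five families $(1)$--$(5)$. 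For the converse I would check that each of $(1)$--$(5)$ is genuinely tame by identifying it with a known tame (special biserial or clannish) algebra or by exhibiting an explicit one-parameter family of indecomposables. The subtle point throughout is completeness: one must verify that no tame family is overlooked and that each borderline ideal is correctly assigned to the tame or the wild side, for which I expect to need the graded Frobenius symmetry and the two-generator restriction simultaneously in order to rule out the wild candidates.
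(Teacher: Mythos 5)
You should first note that the paper itself contains no proof of this statement: it is Lemma 4.1, quoted verbatim from the earlier paper \cite{L} (``The main result of \cite{L} is the following result''), so your attempt has to be measured against the proof given there. Your reductions do reconstruct the skeleton of that proof correctly: the Radford--Majid biproduct decomposition gr$H\cong \Lambda\times (kG)^{\ast}$, with $\Lambda$ the coinvariant braided Hopf algebra in ${}^{(kG)^{\ast}}_{(kG)^{\ast}}\mathcal{YD}$, connected graded, hence local, and Frobenius; the transfer of representation type between gr$H=\Lambda\#(kG)^{\ast}$ and $\Lambda$ via the duality $(\Lambda\#(kG)^{\ast})\#kG\cong M_{|G|}(\Lambda)$ and semisimplicity of $kG$ in characteristic $0$; and the bound $\dim\Lambda_{1}=2$, since one generator gives finite type and $n\geq 3$ generators give a wild radical-square-zero quotient by the separated-quiver criterion. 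All of this is sound and is how \cite{L} proceeds.

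The genuine gap is your last step, and it is not a small one: for two generators, deciding exactly which homogeneous ideals $I$ make $k\langle x,y\rangle/I$ tame \emph{is} the theorem, and the tools you name cannot deliver it. The special biserial/string dichotomy you invoke is of no help for completeness, because special biserial algebras are automatically tame; what must actually be proved is that \emph{every} admissible ideal outside the five families yields a \emph{wild} algebra, which concerns precisely the algebras to which that theory does not apply (note also that family (1) is not even special biserial, since both $x\cdot x$ and $y\cdot x$ survive in it). This infinite collection of wildness proofs is the hard content, and in \cite{L} it is not re-derived degree by degree but imported from the known classification of tame local algebras going back to Ringel \cite{Ringel} and from Erdmann-type results \cite{Karin}; your proposal has no substitute for that input. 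A further sign that your scheme is not the mechanism behind the list: you propose to pin down the relations using graded Frobenius symmetry, yet family (5), $I=(yx-x^{2},\,y^{2})$, is not Frobenius at all --- Lemma 4.2 of the present paper proves exactly this. That is harmless for the forward implication (a shorter list suffices a fortiori), but it shows the five families were not generated by Frobenius bookkeeping, and for the converse direction you would still owe an actual tameness proof for each listed family. As written, your proposal establishes the reduction to a two-generator local graded Frobenius algebra and then restates, rather than proves, the classification.
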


As pointed out in the introduction, our aim is to determine which
ideals $I$ listed in Lemma 4.1 and what groups $G$ actually make
$k<x,y>/I \times (kG)^{\ast}$  a Hopf algebra.

At first, we show that the case (5) in Lemma 4.1 won't occur.

\begin{lemma}  $\Lambda=k<x,y>/(yx-x^{2},\;y^{2})$ is not a local Frobenius
algebra.
\end{lemma}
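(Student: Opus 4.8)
The plan is to pin down the algebra $\Lambda=k\langle x,y\rangle/(yx-x^{2},\,y^{2})$ completely, since the statement hinges on its precise structure. The two defining relations hide a third one: computing $x^{3}=x^{2}\cdot x=(yx)x=y(x^{2})=y(yx)=y^{2}x=0$ shows that $x^{3}=0$ holds in $\Lambda$. So first I would run the diamond lemma on the rewriting system $yx\to x^{2}$, $y^{2}\to 0$, $x^{3}\to 0$, checking in particular that the one troublesome overlap $yyx$ (which reduces to $0$ via $y^{2}$ but to $x^{3}$ via $yx$) is resolved precisely by the derived relation $x^{3}=0$. This makes the system confluent, so the reduced monomials are $1,x,x^{2},y,xy,x^{2}y$ and hence $\dim_{k}\Lambda=6$.

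Next I would record that $\Lambda$ is local: the ideal $J=(x,y)$ is nilpotent (everything of positive degree vanishes by degree $4$) and $\Lambda/J\cong k$, so $J$ is the Jacobson radical and $\Lambda$ is local. This is the easy half of ``local Frobenius'', so the entire content of the lemma is to contradict the Frobenius property.

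The criterion I would invoke is that a finite-dimensional \emph{local} algebra $A$ which is Frobenius must have a simple, hence one-dimensional, socle: indeed $A\cong\operatorname{Hom}_{k}(A,k)$ as right $A$-modules, and the socle of $\operatorname{Hom}_{k}(A,k)$ is the $k$-dual of the top $A/J$ of the left regular module, which is one-dimensional when $A$ is local. Thus it suffices to exhibit a right socle of dimension at least $2$. Using the relations together with $x^{3}=0$, one checks $xy\cdot x=x(yx)=x\cdot x^{2}=x^{3}=0$ and $xy\cdot y=xy^{2}=0$, and likewise $x^{2}y\cdot x=x^{2}y\cdot y=0$; hence both $xy$ and $x^{2}y$ annihilate the radical on the right, and by the basis above they are linearly independent. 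Therefore $\dim_{k}\operatorname{soc}(\Lambda_{\Lambda})\ge 2$, which contradicts the Frobenius property, so $\Lambda$ is not Frobenius and the lemma follows.

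The main obstacle is the very first step: getting an honest basis and the correct dimension. The real danger is to overlook the derived relation $x^{3}=0$ (and so to believe $\Lambda$ is larger, perhaps even infinite-dimensional), or conversely to collapse too much; the careful verification of confluence, guaranteeing that the six listed monomials are genuinely linearly independent, is where the care is needed. Once the basis is secured, both the locality and the socle computation reduce to routine linear algebra.
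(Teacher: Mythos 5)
Your proof is correct, but it takes a genuinely different route from the paper's. The paper never determines $\Lambda$ explicitly: it argues by contradiction inside $\Lambda$, showing $J_\Lambda^{3}=(yxy)$ and $J_\Lambda^{4}=0$, and then splitting into the cases $yxy\neq 0$ and $yxy=0$, each of which violates the one-dimensionality of the socle of a local Frobenius algebra. You instead pin down the algebra completely: you extract the hidden relation $x^{3}=0$, verify confluence of the rewriting system $yx\to x^{2}$, $y^{2}\to 0$, $x^{3}\to 0$ via the diamond lemma (the overlap $yyx$ is indeed the only one needing the derived relation; the remaining overlaps, such as $yx$ against $x^{3}$, resolve trivially and are worth a word), obtain the basis $\{1,x,x^{2},y,xy,x^{2}y\}$ with $\dim_{k}\Lambda=6$, and then exhibit the two independent right-socle elements $xy$ and $x^{2}y$. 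Both arguments hinge on the same key fact, namely that a local Frobenius algebra has simple socle, which you justify by the standard duality $\operatorname{soc}(D({}_{A}A))\cong D(A/J)$ and the paper uses implicitly as $\dim_{k}\operatorname{soc}\Lambda=1$. Your approach buys more than the paper's: it proves that $\Lambda$ \emph{is} local (so it is precisely the Frobenius property that fails), it gives the exact dimension and socle of $\Lambda$, and it sidesteps a delicate point in the paper's second case, where the conclusion that $\operatorname{soc}\Lambda$ is not simple tacitly requires the linear independence of $x^{2}$ and $xy$ in $\Lambda$ --- a fact that only becomes rigorous once one has a normal form of the kind you computed. What the paper's approach buys in exchange is brevity: it avoids the diamond-lemma machinery entirely and works with a handful of monomial identities.
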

\begin{proof} Suppose it is.

Claim:  $J_{\Lambda}^{3}=(yxy) \subseteq soc \Lambda$. We have that
$J_{\Lambda}^{3}$ is generated by $xyx$ and $yxy$ by the given
relations. Moreover, modulo $J_{\Lambda}^{4}$ we have that
$xyx\equiv x^{3}\equiv yxx\equiv y^{2}x\equiv 0$ and therefore
$J_{\Lambda}^{3}=(yxy)$. This implies
$J_{\Lambda}^{4}=((yx)^{2})\subseteq yJ_{\Lambda}^{4}\subseteq
J_{\Lambda}^{5}$. Thus $J_{\Lambda}^{4}=0$ as required.

We claim $yxy$ must be zero now. Otherwise, assume $yxy\neq 0$ and
thus $J_{\Lambda}^{3}=(yxy) = soc \Lambda$. Since
$J_{\Lambda}^{4}=0$, we know $xyx=0$ and $xy^{2}=0$. This means
$xy\in soc\; \Lambda$. Clearly, $xy\neq 0$ since otherwise $yxy=0$.
Since $dim_{k}soc\; \Lambda=1$, there exists non-zero $c\in k$ such
that $xy=cyxy$. So we have $xy=cyxy=c^{2}y^{2}xy=0$. It's a
contradiction. This means $yxy=0$ and thus $J^{3}_{\Lambda}=0$ and
$J^{2}_{\Lambda}\subseteq soc \Lambda$. Therefor $soc \Lambda$ is
not simple, which is absurd.
\end{proof}
If there exists a finite group $G$ such that
$k<x,y>/(yx-x^{2},\;y^{2})\times (kG)^{\ast}$ is a Hopf algebra,
then $k<x,y>/(yx-x^{2},\;y^{2})$ must be local Frobenius
(Proposition 5.3 in \cite{L}). This implies that the case (5) can't
appear.

So we only need to consider cases (1)-(4). In this paper, we say a
basic Hopf algebra $H$ is graded if $H\cong$ gr$H$ as Hopf algebras.
Now let $H$ be a tame graded basic Hopf algebra and assume it is
connected as an algebra. In this situation, we say $H$ is a
connected tame graded basic Hopf algebra. Denote its Gabriel quiver
by $\Gamma_{G}(W)$, which is a covering quiver by Lemma 2.2. Thus
$H/J_{H}\cong (kG)^{\ast}$. By the assumption of $H$ being
connected, $\Gamma_{G}(W)$ is a connected quiver. From the
definition of covering quivers, we can deduce that $G=<W>$, the
group generated by $W$. By Lemma 4.1, the length of $W$ is 2. Thus
$G$ is an abelian group by Lemma 2.1. Combining these discussions,
we get the next observation.

\begin{proposition} Let $H$ be a connected tame graded basic Hopf
algebra and $\Gamma_{G}(W)$ its Gabriel quiver. Then the length of
$W$ is 2, $G=<W>$ and is abelian.\end{proposition}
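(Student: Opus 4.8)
The plan is to read off the two assertions directly from the covering-quiver description furnished by Lemma 2.2, combined with the explicit list of local factors in Lemma 4.1, and then to invoke Lemma 2.1 for the abelianness. First I would note that, by Lemma 2.2, the Gabriel quiver of $H$ is a covering quiver $\Gamma_{G}(W)$ and $H/J_{H}\cong(kG)^{\ast}$, so the vertices $\{v_{g}\}_{g\in G}$ are indexed by $G$. Since $H$ is graded, Lemma 4.1 gives $H\cong k<x,y>/I\times(kG)^{\ast}$ with $I$ one of the forms (1)--(4); in each case the local tensor factor $k<x,y>/I$ is generated in degree one by exactly the two elements $x,y$.

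Next I would pin down the length of $W$. In $\Gamma_{G}(W)$ with $W=(w_{1},\ldots,w_{n})$, the arrows leaving a fixed vertex $v_{g^{-1}}$ are precisely the $(a_{i},g)$ for $i=1,\ldots,n$, so every vertex has exactly $n$ outgoing arrows. On the other hand, the number of arrows of the Gabriel quiver attached to a vertex equals the dimension of the corresponding degree-one piece of gr$H$, which is the number of algebra generators contributed by the local factor $k<x,y>/I$, namely $2$. Matching the two counts forces $n=2$, i.e. the length of $W$ is $2$.

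For $G=<W>$ I would use connectedness. Because $H$ is connected as an algebra, the quiver $\Gamma_{G}(W)$ is connected. Tracing the arrow rule $(a_{i},g)\colon v_{g^{-1}}\to v_{w_{i}g^{-1}}$, from a vertex $v_{h}$ one reaches exactly the vertices $v_{w_{i}h}$; hence the set of vertices reachable from $v_{e}$ in the underlying graph is indexed by the submonoid generated by $W$, which coincides with the subgroup $<W>$ since $G$ is finite. Connectedness therefore yields $<W>=G$. Finally, having established that the length of $W$ is $2$, Lemma 2.1 shows $<W>$ is abelian, and since $G=<W>$ we conclude that $G$ is abelian.

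The step I expect to be the main obstacle is the identification of the length of $W$ with the number of generators of the local factor: it requires matching the purely combinatorial arrow-count of the covering quiver against the degree-one generators produced by the bosonization in Lemma 4.1, that is, verifying that the two arrows $x,y$ of $k<x,y>/I$ are exactly the arrows attached to each vertex of $\Gamma_{G}(W)$. Once that correspondence is secured, the remaining assertions are bookkeeping: connectedness against the definition of a covering quiver, and abelianness directly from Lemma 2.1.
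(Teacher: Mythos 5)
Your proposal is correct and follows essentially the same route as the paper: Lemma 2.2 to present the Gabriel quiver as a covering quiver $\Gamma_{G}(W)$, Lemma 4.1 to force exactly two arrows at each vertex (hence length of $W$ equal to $2$), connectedness of the quiver to get $G=\langle W\rangle$, and Lemma 2.1 for abelianness. The paper's own argument, given in the paragraph preceding the proposition, is just a terser version of this same chain of deductions, and your fleshed-out arrow-counting and reachability details are accurate fillings of the steps the paper leaves implicit.
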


By Lemma 2.5, the surjection $\pi: k\Gamma_{G}(W) \rightarrow H$ is
a Hopf algebra surjection and thus $Ker \pi$ is a Hopf ideal. We now
lift the ideals (1)-(4) in Lemma 4.1 to the ideals of
$k\Gamma_{G}(W)$ and we need to determine which lifting is a Hopf
ideal.

By Proposition 4.3, for any vertex of $\Gamma_{G}(W)$, there are
exactly two arrows going out and two arrows coming in. Denote the
arrows starting from $e$ by $a$ and $b$ respectively. Since $x,y$
are generators of the Jacobson radical of $k<x,y>/I$, we must lift
$x,y$ to linear combination of arrows. By Lemma 2.6, it is harmless
to lift $x$ and $y$ to $\sum_{g\in G}g\cdot a$ and $\sum_{g\in
G}g\cdot b$ respectively, i.e.,
$$x \mapsto \;X:=\sum_{g\in G}g\cdot a,\;\;\;\;y\mapsto\;Y:=\sum_{g\in G}g\cdot b.$$
Thus our task is just to determine whether the following ideals are
Hopf ideals or not:

(1): $I_{1}(a)=(X^{2}-Y^{2},\;YX-aX^{2},\;XY)\;\;\;\;$ for $0\neq
a\in k$;

(2): $I_{2}(m,a)=(X^{2},\;Y^{2},\;(XY)^{m}-a(YX)^{m})\;\;\;\;$ for
$0\neq a\in k$ and $m\geq 1$;

(3): $I_{3}(n)=(X^{n}-Y^{n},\;XY,\;YX)\;\;\;\;$ for $n\geq 2$;

(4): $I_{4}(m)=(X^{2},\;Y^{2},\;(XY)^{m}X-(YX)^{m}Y)\;\;\;\;$ for
$m\geq 1$.\\

By Proposition 4.3, $W=(g,g)$ or $W=(g,h)$ with $g\neq h$. We
discuss these two cases separately.

\subsection{Case 1: $W=(g,g)$}
Using the standard notations of covering quivers, $a=(a_{1}, e)$ and
$b=(a_{2},e)$. Assume that ord$(g)=n$ and $\Gamma_{G}(W)$ is just
the quiver given in Example 2.1. Since $G$ is abelian, the action of
$kG\otimes (kG)^{\textsf{op}}$ is diagonalizable. Thus, we can
assume that
$$g\cdot (a_{1}, e)=(a_{1}, g),\;\;\;\;g\cdot (a_{2}, e)=(a_{2}, g),$$
$$(a_{1}, e)\cdot g=q^{-1}g\cdot (a_{1}, e),\;\;\;\;(a_{2}, e)\cdot g=p^{-1}g\cdot (a_{2}, e)$$
for $p,q$ are $n$-th roots of unity. Denote $v_{g^{i}}$ by $v_{i}$
for simplicity.

\begin{lemma} $$\Delta(X)=X\otimes 1+(\sum_{i=0}^{n-1}q^{-i}v_{i})\otimes X,\;\;\;\;
S(X)=-q\sum_{i=0}^{n-1}a\cdot g^{i};$$
  $$\Delta(Y)=Y\otimes
1+(\sum_{i=0}^{n-1}p^{-i}v_{i})\otimes Y,\;\;\;\;
S(Y)=-p\sum_{i=0}^{n-1}b\cdot g^{i}.$$\end{lemma}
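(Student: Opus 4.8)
The statement to prove computes $\Delta$ and $S$ on the lifted generators $X=\sum_{g\in G}g\cdot a$ and $Y=\sum_{g\in G}g\cdot b$. The plan is to start from the general formulas furnished by Lemma 2.3 (equivalently Lemma 2.4) for an arrow in the Hopf algebra $k\Gamma_{G}(W)$, and to specialize them using the diagonalization hypotheses on the $kG\otimes(kG)^{\mathsf{op}}$-action just introduced. Recall that for an arrow $a=(a_{1},e)\in V^{e}_{w_{1}}$ the comultiplication is $\Delta(a)=\sum_{g\in G}(g\cdot a\otimes v_{g}+v_{g}\otimes a\cdot g)$, and the antipode is $S(a)=-f\cdot a\cdot d$ with $d,f$ the source and target indices. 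Since $X$ is a $G$-invariant sum of arrows, I would first exploit linearity: $\Delta(X)=\sum_{h\in G}\Delta(h\cdot a)$, and apply the arrow formula to each $h\cdot a$.

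\emph{Main computation for $\Delta(X)$.} First I would fix the convention $g\cdot(a_{1},e)=(a_{1},g)$ and $(a_{1},e)\cdot g=q^{-1}\,g\cdot(a_{1},e)$, which says the right $G$-action is the left action twisted by the scalar $q^{-1}$ (with $q$ an $n$-th root of unity). Writing $v_{i}=v_{g^{i}}$ and $g^{i}\cdot a=:a_{(i)}$, so that $X=\sum_{i=0}^{n-1}a_{(i)}$, I would compute $\Delta(X)=\sum_{i}\sum_{j}\bigl(g^{j}\cdot a_{(i)}\otimes v_{j}+v_{j}\otimes a_{(i)}\cdot g^{j}\bigr)$. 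In the first family of terms, summing $g^{j}\cdot a_{(i)}=a_{(i+j)}$ over $i$ for each fixed $j$ reproduces the full sum $X$, and $\sum_{j}1\otimes v_{j}=1\otimes\bigl(\sum_{j}v_{j}\bigr)=1\otimes 1$; this yields the summand $X\otimes 1$. In the second family, the twist gives $a_{(i)}\cdot g^{j}=q^{-j}\,g^{j}\cdot a_{(i)}=q^{-j}a_{(i+j)}$, so summing over $i$ collapses the right tensor factor to $X$ again, leaving $\bigl(\sum_{j}q^{-j}v_{j}\bigr)\otimes X$. Reindexing $j\mapsto i$ gives exactly $\Delta(X)=X\otimes 1+\bigl(\sum_{i=0}^{n-1}q^{-i}v_{i}\bigr)\otimes X$. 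The computation for $Y$ is identical with $p$ replacing $q$.

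\emph{The antipode.} For $S$ I would again use linearity together with the arrow rule $S(x)=-f\cdot x\cdot d$ for $x\in V^{d}_{f}$. Each arrow $g^{i}\cdot a=(a_{1},g^{i})$ runs from $v_{g^{-i}}$ to $v_{w_{1}g^{-i}}$, so its source and target group-indices are determined, and $S(g^{i}\cdot a)=-f_{i}\cdot(g^{i}\cdot a)\cdot d_{i}$ for the appropriate $d_{i},f_{i}\in G$. I would then move the right $G$-action through the twist $(-)\cdot g=q^{-1}g\cdot(-)$ to convert everything into left actions and collect the resulting geometric weighting into the stated form $S(X)=-q\sum_{i=0}^{n-1}a\cdot g^{i}$, and similarly $S(Y)=-p\sum_{i=0}^{n-1}b\cdot g^{i}$.

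The routine part is the double-index bookkeeping; the step I expect to require the most care is tracking the twisting scalar $q^{-1}$ consistently between the left and right $G$-actions, since it is precisely this scalar that survives in the left tensor factor of $\Delta(X)$ and as the prefactor $q$ in $S(X)$. Getting the direction of the twist and the reindexing of the idempotents $v_{i}$ right is what pins down the exact exponents $q^{-i}$; everything else is telescoping of $G$-sums using $\sum_{g}v_{g}=1$ and the invariance of $X$ and $Y$ under the left $G$-action.
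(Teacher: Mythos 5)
Your proposal is correct and follows essentially the same route as the paper: both expand $\Delta(X)$ and $S(X)$ using the Green--Solberg formulas of Lemma 2.3, then use the twist relation $a\cdot g=q^{-1}\,g\cdot a$, the $G$-invariance of $X$, and $\sum_{i}v_{i}=1$ to collect terms (the paper phrases the antipode step via $S(g^{-i}\cdot a)=S(a)\cdot g^{i}$, which is just the per-arrow formula $S(x)=-f\cdot x\cdot d$ you apply, packaged as an intertwining identity). The only difference is bookkeeping: you track double indices arrow by arrow, while the paper first notes $g^{i}\cdot X=X$ and $X\cdot g^{i}=q^{-i}X$ and computes with $X$ directly.
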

\begin{proof} We only prove the formulaes for $X$. Those for $Y$ can be proved
in the same manner.
\begin{eqnarray*}
\Delta(X)&=&\sum_{i=0}^{n-1}g^{i}\cdot X\otimes
v_{i}+\sum_{i=0}^{n-1}g^{i}\cdot v_{i}\otimes X\cdot g^{i}\\
&=&\sum_{i=0}^{n-1}X\otimes v_{i}+\sum_{i=0}^{n-1}v_{i}\otimes
(\sum_{j=0}^{n-1}g^{j}\cdot a\cdot g^{i})\\
&=& X\otimes 1+\sum_{i=0}^{n-1}v_{i}\otimes
(\sum_{j=0}^{n-1}q^{-i}g^{i+j}\cdot a)=X\otimes
1+\sum_{i=0}^{n-1}v_{i}\otimes q^{-i}X\\
&=&X\otimes 1+(\sum_{i=0}^{n-1}q^{-i}v_{i})\otimes X.
\end{eqnarray*}
And,
$$S(X)=S(\sum_{i=0}^{n-1}g^{-i}\cdot a)=\sum_{i=0}^{n-1}S(a)\cdot g^{i}=
\sum_{i=0}^{n-1}-(g\cdot a)\cdot g^{i}=-q\sum_{i=0}^{n-1}a\cdot
g^{i}.$$
 \end{proof}

For an indeterminant $x$, define the function $e_{x}:=
\sum_{i=0}^{n-1}x^{-i}v_{i}$.

\begin{lemma} We have the following identities
 $$Xe_{q}=qe_{q}X,\;\;\;\;Ye_{q}=qe_{q}Y,\;\;\;\;Xe_{p}=pe_{p}X,\;\;\;\;Ye_{p}=pe_{p}Y.$$
 \end{lemma}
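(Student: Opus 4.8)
The plan is to prove all four identities by a single direct computation in the path algebra $k\Gamma_{G}(W)$, exploiting the fact that $X$ and $Y$ are sums of arrows (homogeneous of degree one) while $e_{q}$ and $e_{p}$ are linear combinations of the vertex idempotents $v_{i}$. First I would write out the constituents explicitly: since $g^{i}\cdot(a_{1},e)=(a_{1},g^{i})$ we have $X=\sum_{i=0}^{n-1}(a_{1},g^{i})$ and $Y=\sum_{i=0}^{n-1}(a_{2},g^{i})$, and from the covering-quiver definition each arrow $(a_{1},g^{i})$ and each $(a_{2},g^{i})$ runs from $v_{g^{-i}}$ to $v_{g^{1-i}}$. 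Note the two families share the same source and target vertices precisely because $W=(g,g)$, so $w_{1}=w_{2}=g$.

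The key mechanism is the idempotent-absorption rule in a path algebra: for an arrow $\alpha$ from $v_{d}$ to $v_{f}$ one has $v_{f}\alpha=\alpha=\alpha v_{d}$, while $v_{f'}\alpha=0$ and $\alpha v_{d'}=0$ for any other vertices. Multiplying $X$ on the right by $e_{q}=\sum_{j}q^{-j}v_{j}$ therefore selects, for each arrow $(a_{1},g^{i})$, the unique term whose idempotent matches its source $v_{g^{-i}}$, i.e. $j\equiv -i \pmod{n}$; using $q^{n}=1$ this yields $Xe_{q}=\sum_{i}q^{i}(a_{1},g^{i})$. Multiplying on the left instead selects the idempotent matching the target $v_{g^{1-i}}$, i.e. $j\equiv 1-i\pmod{n}$, giving $e_{q}X=\sum_{i}q^{i-1}(a_{1},g^{i})=q^{-1}Xe_{q}$. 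Rearranging produces exactly $Xe_{q}=qe_{q}X$.

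The remaining three identities require no new argument. The computation for $Y$ and $e_{q}$ is verbatim the same, because $(a_{2},g^{i})$ has the same source and target as $(a_{1},g^{i})$; and the computations with $e_{p}$ are identical once $q$ is replaced by $p$ throughout. Thus all four follow from one template by changing only the arrow family ($a_{1}$ versus $a_{2}$) and the scalar ($q$ versus $p$), and one would simply record this rather than repeat the calculation four times.

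I expect the only delicate point to be bookkeeping: keeping the composition convention $p=\alpha_{n}\cdots\alpha_{1}$ consistent with which idempotent is absorbed on which side, and reducing the exponents of $q$ (respectively $p$) modulo $n$ via $q^{n}=1$ so that the surviving coefficient is correctly identified. There is no genuine obstacle beyond this index-tracking; the relations are forced by the degree-one nature of $X,Y$ together with the way multiplication by $e_{q}$ and $e_{p}$ picks out source versus target vertices.
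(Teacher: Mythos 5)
Your proof is correct and follows essentially the same route as the paper's: a direct computation in the path algebra using idempotent absorption (right multiplication by $e_{q}$ picks out source vertices, left multiplication picks out target vertices), followed by a shift of index justified by $q^{n}=p^{n}=1$, with the remaining three identities obtained by swapping the arrow family $a_{1}\leftrightarrow a_{2}$ and the scalar $q\leftrightarrow p$. The only cosmetic difference is that you record coefficients in the basis $(a_{1},g^{i})$ while the paper writes the same terms as $g^{-i}\cdot a$ and reindexes the sum, which is the identical bookkeeping.
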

 \begin{proof} Note that
 $$Xe_{q}=X\sum_{i=0}^{n-1}q^{-i}v_{i}=\sum_{i=0}^{n-1}q^{-i}g^{-i}\cdot a$$
 and $$qe_{q}X=q(\sum_{i=0}^{n-1}q^{-i}v_{i})X=q\sum_{i=0}^{n-1}q^{-i}g^{-(i-1)}\cdot a
 =\sum_{i=0}^{n-1}q^{-(i-1)}g^{-(i-1)}\cdot a.$$ Thus
 $Xe_{q}=qe_{q}X$. We can prove the other identities
 similarly.\end{proof}

 With the preparation, now we are ready to  determine whether
 $I_{1}(a),\;\\ I_{2}(m,a),\;I_{3}(n)$ and $I_{4}(m)$ are  Hopf ideals.

 \begin{lemma} $I_{1}(a)$ and $I_{3}(n)$ are not Hopf ideals of
 $k\Gamma_{G}(W)$.\end{lemma}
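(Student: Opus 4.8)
The goal is to show that if the lifted ideals $I_1(a)$ or $I_3(n)$ were Hopf ideals, we would reach a contradiction. The strategy in both cases is the same: a Hopf ideal must be a coideal, so $\Delta(I)\subseteq I\otimes k\Gamma_G(W)+k\Gamma_G(W)\otimes I$. Thus for each generating relation $r$ of the ideal, I would compute $\Delta(r)$ using the formulas in Lemma 4.4 together with the skew-commutation identities in Lemma 4.5, and then check whether the result actually lands in $I\otimes k\Gamma_G(W)+k\Gamma_G(W)\otimes I$. If some term cannot be absorbed, the ideal fails to be a coideal and hence is not a Hopf ideal.

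For $I_1(a)=(X^2-Y^2,\;YX-aX^2,\;XY)$, I would first record that $X,Y$ are skew-primitive: $\Delta(X)=X\otimes 1+e_q\otimes X$ and $\Delta(Y)=Y\otimes 1+e_p\otimes Y$, where $e_x=\sum_{i=0}^{n-1}x^{-i}v_i$. Applying $\Delta$ to the relation $XY$ and expanding as a product $\Delta(X)\Delta(Y)$, I would use Lemma 4.5 to move the grouplike-type elements $e_q,e_p$ past $X,Y$, picking up scalars $p,q$. The terms of the form $X\otimes(\cdots)$ and $(\cdots)\otimes Y$ are forced to involve $XY$, $X^2-Y^2$, or $YX-aX^2$ in the appropriate tensor slot if $\Delta(XY)$ is to lie in the coideal; the surviving \emph{mixed} term of the shape $Xe_p\otimes XY$-type, after normalization, should produce a bare monomial like $YX\otimes X$ or $X^2\otimes X$ that cannot be rewritten inside $I_1(a)$ (since modulo $I_1(a)$ the element $XY=0$ but $X^2\neq 0$). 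The key computation will compare $\Delta(XY)$ against $\Delta(YX)$ and $\Delta(X^2)$, and the coideal condition will force a relation among the scalars $p,q,a$ that is incompatible with $a\neq 0$ and $p,q$ being roots of unity.

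For $I_3(n)=(X^n-Y^n,\;XY,\;YX)$, the analysis of $\Delta(XY)$ and $\Delta(YX)$ proceeds exactly as above, forcing constraints from the vanishing of both mixed products. The extra relation $X^n-Y^n$ is where I expect the real obstruction: expanding $\Delta(X^n)$ via the skew-primitive formula gives a sum $\sum_{i} \binom{n}{i}_q X^i e_q^{\,n-i}\otimes X^{n-i}$ of Gaussian-binomial type (using $Xe_q=qe_qX$ from Lemma 4.5 to commute the factors), and similarly for $Y^n$ with $p$. The requirement $\Delta(X^n-Y^n)\in I_3(n)\otimes k\Gamma_G(W)+k\Gamma_G(W)\otimes I_3(n)$ forces the intermediate coefficients $\binom{n}{i}_q-\binom{n}{i}_p$ (times the appropriate $e$-powers) to vanish for $0<i<n$, and the surviving top and bottom terms must match up. This is precisely where the combinatorial functions of Section 3 enter: the condition that all these intermediate terms vanish is equivalent to $H_1(n,\ell,q)=0$ and $H_1(n,\ell,p)=0$ for the relevant $\ell$, which by Proposition 3.2 forces $p,q$ to be \emph{primitive} $n$-th roots of unity, and then comparing the $X^n$ and $Y^n$ leading terms yields an incompatibility.

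The main obstacle is the bookkeeping in the $X^n-Y^n$ case: correctly expanding $\Delta(X^n)$ into its $q$-binomial form, identifying which terms are automatically in the coideal and which are not, and matching the genuinely obstructing terms to the hypotheses of Proposition 3.2. The $I_1(a)$ case is the easier of the two, being degree two, but one must be careful that the presence of the parameter $a$ does not allow the obstructing monomial to be absorbed; I would isolate the coefficient of a fixed monomial such as $X^2\otimes X$ (or $YX\otimes X$) in $\Delta$ of each generator and show the linear system it must satisfy has no solution. Throughout, I rely on the fact (Lemma 2.6) that such a Hopf ideal, being admissible, is $G$-stable, which guarantees the liftings $X,Y$ are the correct canonical generators and justifies working with the compact $e_q,e_p$ notation.
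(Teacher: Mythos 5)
Your overall strategy (test the coideal condition $\Delta(I)\subseteq I\otimes k\Gamma_{G}(W)+k\Gamma_{G}(W)\otimes I$ on the generators, using Lemmas 4.4 and 4.5) is exactly the paper's, and the decisive computation is one you mention: $\Delta(XY)$. Indeed the paper's whole proof is
\begin{align*}
\Delta(XY)&=(X\otimes 1+e_{q}\otimes X)(Y\otimes 1+e_{p}\otimes Y)\\
&=XY\otimes 1+pe_{p}X\otimes Y+e_{q}Y\otimes X+e_{pq}\otimes XY,
\end{align*}
and since $XY$ is a generator of \emph{both} $I_{1}(a)$ and $I_{3}(n)$, this single identity kills both ideals at once: because $I$ is admissible, $I\subseteq J^{2}$, so the projection of $I\otimes k\Gamma_{G}(W)+k\Gamma_{G}(W)\otimes I$ onto $k\Gamma_{G}(W)_{1}\otimes k\Gamma_{G}(W)_{1}$ is zero, and the coideal condition forces $pe_{p}X\otimes Y+e_{q}Y\otimes X=0$, which is impossible ($e_{p}X$ involves only $a$-arrows, $e_{q}Y$ only $b$-arrows, and $p\neq 0$). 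Note that this is an unconditional impossibility: no ``relation among the scalars $p,q,a$'' appears, and no comparison with $\Delta(YX)$ or $\Delta(X^{2})$ is needed. Your description of the obstructing term as being ``of the shape $Xe_{p}\otimes XY$-type,'' producing monomials like $YX\otimes X$ or $X^{2}\otimes X$, confuses the degrees: the obstruction lives in degree $(1,1)$, not $(2,1)$, and that is precisely why it can never be absorbed into an admissible ideal.

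The genuine gap is in your plan for $I_{3}(n)$, where you relegate the $XY$, $YX$ computation to a preliminary role and claim that ``the extra relation $X^{n}-Y^{n}$ is where I expect the real obstruction.'' That step would fail. Carrying out your $q$-binomial expansion, the vanishing of the intermediate coefficients $\binom{n}{i}_{q^{-1}}$ and $\binom{n}{i}_{p^{-1}}$ for $0<i<n$ does force $p$ and $q$ to be primitive $n$-th roots of unity; but then $e_{q}^{n}=e_{q^{n}}=1=e_{p}^{n}$, so
$$\Delta(X^{n}-Y^{n})=(X^{n}-Y^{n})\otimes 1+1\otimes(X^{n}-Y^{n}),$$
which \emph{does} lie in the coideal. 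Hence the relation $X^{n}-Y^{n}$ yields no contradiction at all, and the Section 3 machinery (Proposition 3.2) that you invoke buys you nothing for this lemma --- in the paper it is needed later, for the analysis of $I_{2}(m,a)$ in Lemmas 4.13--4.15, not here. The actual obstruction for $I_{3}(n)$ is the same degree-$(1,1)$ term arising from its generator $XY$; had you trusted the computation you labeled as preliminary, you would have been done in two lines.
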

 \begin{proof} By Lemmas 4.4 and 4.5,
\begin{eqnarray*}\Delta(XY)&=&(X\otimes 1+e_{q}\otimes X)(Y\otimes 1+e_{p}\otimes
Y)\\
&=&XY\otimes 1
 +pe_{p}X\otimes Y+ e_{q}Y\otimes X+e_{pq}\otimes XY. \end{eqnarray*}
 Suppose $I_{1}(a)$ or $I_{3}(n)$ is a Hopf ideal, then clearly we have
 $$pe_{p}X\otimes Y+ e_{q}Y\otimes X=0,$$
 which is impossible.
\end{proof}

By Lemma 4.4 and Lemma 4.5, for any element $f(X,Y)$ generated by
$X,Y$, we can always write uniquely $\Delta(f(X,Y))$ in the
following form:
$$f(X,Y)\otimes 1+(f(X,Y))_{X}\otimes X+(f(X,Y))_{Y}\otimes Y+(f(X,Y))_{XY}\otimes XY+\cdots$$
$$+ (f(X,Y))_{(XY)^{i}}\otimes (XY)^{i}+(f(X,Y))_{(YX)^{i}}\otimes
(YX)^{i}+(f(X,Y))_{(XY)^{i}X}\otimes (XY)^{i}X$$
$$+(f(X,Y))_{Y(XY)^{i}}\otimes Y(XY)^{i}+\cdots.$$ In the following of this paper, we frequently use this
expression without any explanation.

\begin{lemma} $I_{4}(m)$ is not a Hopf ideal of $k\Gamma_{G}(W)$.
\end{lemma}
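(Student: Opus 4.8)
The plan is to show that $I_4(m)=(X^2,Y^2,(XY)^mX-(YX)^mY)$ fails to be a Hopf ideal by applying the comultiplication to the distinguished generator $(XY)^mX-(YX)^mY$ and exhibiting a tensor component that does not lie in $I_4(m)\otimes k\Gamma_G(W)+k\Gamma_G(W)\otimes I_4(m)$. Since $X^2,Y^2$ are easily seen to be primitive-like (their coproducts land in the ideal, by the same computation as in Lemma~4.6), the only obstruction can come from the third generator, so I would concentrate entirely on $\Delta\big((XY)^mX-(YX)^mY\big)$.

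First I would establish a general formula for $\Delta$ of an alternating word, building on the expansion introduced just before this lemma. Using $\Delta(X)=X\otimes 1+e_q\otimes X$ and $\Delta(Y)=Y\otimes 1+e_p\otimes Y$ together with the commutation relations of Lemma~4.5 ($Xe_q=qe_qX$, etc.), I would track the coefficient of a carefully chosen pure-right tensor factor. The natural candidate is the component sitting in front of $X$ on the right, i.e. the term $\big((XY)^mX-(YX)^mY\big)_X\otimes X$, or alternatively the front-of-$Y$ component; one of these should survive modulo the ideal. Concretely, when expanding $\Delta((XY)^mX)$ the summand contributing $(\,\cdot\,)\otimes X$ comes from choosing the rightmost $X$-factor to carry the $X$ and all other factors to carry their grouplike part $e_{(\cdot)}$, producing a left tensor factor of the form (scalar)$\,(XY)^m$ times an idempotent $e_\bullet$; similarly $\Delta((YX)^mY)$ contributes nothing to the front-of-$X$ slot, so these do not cancel. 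Working modulo $X^2,Y^2$ keeps the bookkeeping finite, and this is precisely where the combinatorial functions $H_1,H_2,H_3$ of Section~3 and Proposition~3.2 enter: the scalar coefficients assemble into one of these sums, and the requirement that the offending term vanish forces $q^m$ (or $p^m$, or a product) to be a primitive root of unity in a way that is incompatible with $q,p$ being $n$-th roots of unity in the case $W=(g,g)$.

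The key steps, in order, are: (i) record $\Delta(X^2)$ and $\Delta(Y^2)$ and verify both lie in $I_4(m)\otimes\,\cdot\,+\,\cdot\,\otimes I_4(m)$, so that only the third generator matters; (ii) derive by induction the closed form for the front-of-$X$ and front-of-$Y$ coefficients in $\Delta((XY)^mX)$ and $\Delta((YX)^mY)$, expressing the emerging scalars via $e_x$ and powers of $p,q$; (iii) reduce these coefficients modulo $I_4(m)$ and identify the surviving scalar with (a value of) $H_1(\cdot,\cdot,t)$ for an appropriate $t\in\{p,q,pq,\ldots\}$; (iv) invoke Proposition~3.2 to conclude that nonvanishing of this coefficient is unavoidable, contradicting the Hopf-ideal hypothesis.

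The main obstacle I anticipate is step~(ii)–(iii): correctly isolating the relevant tensor component and proving that the various words contributing to it reduce, modulo $X^2$ and $Y^2$, to a single nonzero scalar multiple of an alternating word, rather than cancelling. The danger is that many terms of the binomial-type expansion collapse to zero modulo $X^2,Y^2$ while others recombine, so one must argue carefully that no further relation in $I_4(m)$ kills the surviving term. Once the surviving coefficient is pinned down as a nonzero specialization of $H_1$, Proposition~3.2 closes the argument cleanly, but getting that coefficient into $H_1$-form without sign or indexing errors is the delicate part.
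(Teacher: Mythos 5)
Your choice of which tensor component to attack is exactly the paper's: the paper isolates $\bigl((XY)^{m}X-(YX)^{m}Y\bigr)_{X}\otimes X$ and shows it cannot be absorbed into $I_{4}(m)\otimes k\Gamma_{G}(W)+k\Gamma_{G}(W)\otimes I_{4}(m)$. But the way you propose to finish the argument, steps (iii)--(iv), would fail. Because the right-hand factor is the single element $X$, exactly \emph{one} letter of each word gets replaced by a grouplike $e_{\bullet}$; there is no choice of several replaced positions, hence no sum over compositions, hence nothing that could assemble into $H_{1}$, $H_{2}$ or $H_{3}$. Those functions and Proposition 3.2 are needed in the paper only for Lemma 4.14 and Proposition 4.15, where one tracks the $\otimes (XY)^{l}$ components of $(XY)^{m}-a(YX)^{m}$ for $I_{2}(m,a)$ in Case 2. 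Here the computation terminates in one line: in $(XY)^{m}X$, replacing the last $X$ gives $(XY)^{m}e_{q}$, replacing the first $X$ gives $e_{q}(YX)^{m}$ (a term your sketch omits -- it is \emph{not} only the rightmost $X$ that survives), every middle replacement produces a factor $Ye_{q}Y=qe_{q}Y^{2}$ and dies mod $Y^{2}$, and every $X$ in $(YX)^{m}Y$ is flanked by $Y$'s, so that word contributes nothing. Thus the coefficient is $(XY)^{m}e_{q}+e_{q}(YX)^{m}$ modulo $I_{4}(m)$, and the contradiction is immediate: since $(xy)^{m}$ and $(yx)^{m}$ are nonzero, linearly independent elements of $k\langle x,y\rangle/(x^{2},y^{2},(xy)^{m}x-(yx)^{m}y)$ and $e_{q}$ is invertible in the span of the idempotents, this element does not lie in $I_{4}(m)$, yet the Hopf-ideal hypothesis forces it to. No root-of-unity analysis occurs.

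Two further points. First, your anticipated contradiction mechanism -- that vanishing of the offending coefficient would force $q^{m}$ (or $p^{m}$, or a product) to be a primitive $m$-th root of unity ``incompatible with $q,p$ being $n$-th roots of unity'' -- is not a valid argument even in situations where $H$-sums do appear: a primitive $m$-th root of unity can perfectly well be an $n$-th root of unity, and indeed this compatibility is precisely what allows $I_{2}(m,a)$ to \emph{be} a Hopf ideal in Case 2 (Proposition 4.15 and Remark 4.17(2), where the condition is $m\mid \mathrm{l.c.m.}(\mathrm{ord}(g),\mathrm{ord}(h))$). So even had an $H_{1}$-value appeared, Proposition 3.2 alone would not have produced a contradiction. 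Second, your step (i) is false as stated: $\Delta(X^{2})=X^{2}\otimes 1+(1+q)e_{q}X\otimes X+e_{q^{2}}\otimes X^{2}$ lies in the required sum only if $q=-1$ (this is how Lemma 4.8 pins down $p=q=-1$ for $I_{2}$). Fortunately this does not matter for your logic: to show an ideal is not a Hopf ideal, it suffices to exhibit one element whose coproduct is not absorbed, so there was never any need to check the generators $X^{2},Y^{2}$ at all.
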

\begin{proof} Assume that it is a Hopf ideal.
It is not hard to see that
\begin{eqnarray*}&&((XY)^{m}X-(YX)^{m}Y))_{X}\otimes X\\
&\equiv& ((XY)^{m}e_{q}+e_{q}(YX)^{m})\otimes
X\;\;\;\;\textrm{mod}\;\;I_{4}(m)\otimes X.\end{eqnarray*}
Thus we
have $(XY)^{m}e_{q}+e_{q}(YX)^{m}\in I_{4}(m)$. This is absurd.
\end{proof}

\begin{lemma} $I_{2}(m,a)$ is a Hopf ideal if and only if
$m=1$ and $q^{-1}=a=p=-1$.\end{lemma}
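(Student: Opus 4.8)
The plan is to translate the Hopf-ideal property into a finite list of coefficient equations and then feed these into Section~3. Because $I_2(m,a)\subset J^2$ is homogeneous of positive degree, $\varepsilon(I_2(m,a))=0$ is automatic and the antipode condition is postponed to the very end. Writing $A=k\Gamma_G(W)/I_2(m,a)$ with projection $\pi$, one has $\ker(\pi\otimes\pi)=I_2(m,a)\otimes k\Gamma_G(W)+k\Gamma_G(W)\otimes I_2(m,a)$, so $I_2(m,a)$ is a Hopf ideal if and only if $(\pi\otimes\pi)\Delta(z)=0$ in $A\otimes A$ for each generator $z\in\{X^2,\,Y^2,\,(XY)^m-a(YX)^m\}$. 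Everything therefore reduces to computing $\Delta$ of these three elements modulo $I_2(m,a)$, and Lemmas~4.4 and~4.5 supply exactly the formulas and commutation rules needed.

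First I would treat the two quadratic generators. From $\Delta(X)=X\otimes 1+e_q\otimes X$ together with $Xe_q=qe_qX$ one obtains
$$\Delta(X^2)=X^2\otimes 1+(q+1)\,e_qX\otimes X+e_q^2\otimes X^2 .$$
The first and last summands lie in $I_2(m,a)\otimes k\Gamma_G(W)+k\Gamma_G(W)\otimes I_2(m,a)$, whereas $e_qX\otimes X$ is nonzero in $A\otimes A$ since $e_qX$ and $X$ are nonzero in degree $1$; hence $q+1=0$. The same computation with $Y^2$ yields $p+1=0$. Thus for $I_2(m,a)$ to be a Hopf ideal we must have $q^{-1}=p=-1$, and in particular $pq=1$.

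The crux is the remaining generator. I would expand $\Delta\big((XY)^m-a(YX)^m\big)=\big(\Delta(XY)\big)^m-a\big(\Delta(YX)\big)^m$, reduce modulo $X^2=Y^2=0$, and isolate the coefficient of the strictly $(XY)$-ordered basis vector $e_{(pq)^l}(XY)^{m-l}\otimes(XY)^l$ for each $0<l<m$. Only the $(XY)^m$-term contributes a pure $(XY)$-word in the left tensor slot, and bookkeeping the weights picked up when the grouplike factors $e_q,e_p,e_{pq}$ are commuted past the letters $X,Y$ (via Lemma~4.5) turns this coefficient into precisely $H_1(m,l,(pq)^2)$ in the notation of Section~3. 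Demanding that these vanish for all $0<l<m$ and applying Proposition~3.2 forces $(pq)^2$ to be a primitive $m$-th root of unity; but $(pq)^2=1$ by the previous step, and $1$ is a primitive $m$-th root only when $m=1$. Hence $m=1$. Finally, at $m=1$ the coefficient of $\otimes X$ in $\Delta(XY-aYX)$ equals $(1-aq)\,e_qY$, so $1-aq=0$ and $a=q^{-1}=-1$.

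For the converse I would substitute $m=1$ and $p=q=a=-1$ and check directly. Here $e:=e_{-1}$ is grouplike with $e^2=1$ and $Xe=-eX$, $Ye=-eY$, so $e_{pq}=1$ and one computes $\Delta(X^2)=X^2\otimes 1+1\otimes X^2$, $\Delta(Y^2)=Y^2\otimes 1+1\otimes Y^2$, and after a single cancellation $\Delta(XY+YX)=(XY+YX)\otimes 1+1\otimes(XY+YX)$; all three lie in $I_2(1,-1)\otimes k\Gamma_G(W)+k\Gamma_G(W)\otimes I_2(1,-1)$. The antipode condition $S(I_2(1,-1))\subseteq I_2(1,-1)$ then follows from the explicit formulas in Lemma~4.4 and the $G$-stability provided by Lemma~2.6. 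I expect the main obstacle to be the bookkeeping of the third paragraph: one must control the $e$-weights and signs carefully enough to confirm that the ordered coefficient is exactly $H_1(m,l,(pq)^2)$ with no interference from the $(YX)^m$-term, and it is exactly for this step that the identities $H_1=H_2=H_3$ of Lemma~3.1 and the root-of-unity criterion of Proposition~3.2 are tailor-made.
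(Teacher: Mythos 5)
Your necessity argument is correct and, apart from one step, follows the same coefficient-extraction strategy as the paper: the treatment of $X^2$, $Y^2$ (forcing $p=q=-1$) and of $XY-aYX$ at $m=1$ (forcing $a=-1$) is essentially identical to the paper's. Where you genuinely diverge is the proof that $m=1$. The paper inspects only the single coefficient of $\otimes XY$: modulo $I_2(m,a)$ it equals $e_q(YX)^{m-1}e_p+\sum_{i=1}^{m}(p^2q^2)^{m-i}e_{pq}(XY)^{m-1}$ for $(XY)^m$ and $a\sum_{i=0}^{m-2}Y(XY)^{m-2-i}e_{pq}(XY)^iX$ for $a(YX)^m$; since the $(XY)^{m-1}$-ordered part cannot cancel against the $(YX)^{m-1}$-ordered parts, one gets $\sum_{i=1}^{m}(p^2q^2)^{m-i}=0$, which becomes $m=0$ once $p=q=-1$, a contradiction in characteristic $0$, with no input from Section 3 at all. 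You instead isolate the $(XY)$-ordered coefficients $e_{(pq)^l}(XY)^{m-l}\otimes(XY)^l$ for all $0<l<m$, identify them with $H_1(m,l,(pq)^2)$, and invoke Proposition 3.2. This identification is in fact correct: only the choices of $l$ adjacent pairs $X_iY_i$ inside $(XY)^m$ produce $(XY)$-ordered left slots (non-adjacent choices, and every contribution of $(YX)^m$, give left words starting with $Y$), and the commutation weights from Lemma 4.5 give exactly $H_1(m,l,(pq)^2)$. So your route works; it is in effect the paper's Case 2 machinery (Lemma 4.14 and Proposition 4.15) transplanted to Case 1, which buys uniformity across the two cases at the cost of heavier bookkeeping. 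Note, though, that Proposition 3.2 is overkill here: once $(pq)^2=1$ is known, the single value $H_1(m,1,1)=m\neq 0$ already yields the contradiction.

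The one genuine flaw is in your converse: you cannot obtain the $G$-stability of $I_2(1,-1)$ from Lemma 2.6, because that lemma \emph{presupposes} that the admissible ideal is already a Hopf ideal, which is exactly what you are trying to prove; the appeal is circular. Fortunately the antipode step does not need it. Either check stability directly ($g\cdot X=X$ and $X\cdot g=q^{-1}X$, and likewise for $Y$, so each generator of $I_2(1,-1)$ is an eigenvector for both actions), or argue as the paper does and compute $S$ on the generators from the formulas of Lemma 4.4: for instance $S(XY+YX)=-\sum_{i=0}^{n-1}v_{-i+2}(XY+YX)v_{-i}$, which lies in $I_2(1,-1)$ simply because $I_2(1,-1)$ is an ideal containing $XY+YX$; the computations for $S(X^2)$ and $S(Y^2)$ are analogous. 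With that repair, your proof is complete.
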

\begin{proof}``$\Longrightarrow$"$\;\;$ Direct computations
show that
$$\Delta(X^{2})=X^{2}\otimes 1+(1+q)e_{q}X\otimes X+e_{q^{2}}\otimes X^{2}$$
and $$\Delta(Y^{2})=Y^{2}\otimes 1+(1+p)e_{p}Y\otimes
Y+e_{p^{2}}\otimes Y^{2}.$$ Thus $1+q=0=1+p$ and so
$$p=q=-1.$$
Next, we show that $m=1$. Otherwise, assume that $m>1$. In
$\Delta(XY)^{m}$, we have the following by direct computations,
\begin{eqnarray*}&&((XY)^{m})_{XY}\otimes XY\\
&\equiv&
(e_{q}(YX)^{m-1}e_{p}+\sum_{i=1}^{m}(XY)^{m-i}e_{pq}(XY)^{i-1})
\otimes XY\;\;\;\;\textrm{mod}\;\;I_{2}(m,a)\otimes
XY.\end{eqnarray*}
 By Lemma 4.5,
$\sum_{i=1}^{m}(XY)^{m-i}e_{pq}(XY)^{i-1}=\sum_{i=1}^{m}(p^{2}q^{2})^{m-i}e_{pq}(XY)^{m-1}$.
Similarly, in $\Delta(YX)^{m}$,
\begin{eqnarray*}&&((YX)^{m})_{XY}\otimes XY\\
&\equiv& \sum_{i=0}^{m-2}Y(XY)^{m-2-i}e_{pq}(XY)^{i}X\otimes XY
\;\;\;\;\textrm{mod}\;\;I_{2}(m,a)\otimes XY.\end{eqnarray*}
 Thus
$(e_{q}(YX)^{m-1}e_{p}+\sum_{i=1}^{m}(p^{2}q^{2})^{m-i}e_{pq}(XY)^{m-1})-a\sum_{i=0}^{m-2}Y(XY)^{m-2-i}\\e_{pq}(XY)^{i}X
\in I_{2}(m,a)$ which implies
$$\sum_{i=1}^{m}(p^{2}q^{2})^{m-i}=0.$$
This is impossible since $p=q=-1$. Thus $m=1$. Finally, we show that
$a=-1$. Indeed,
\begin{eqnarray*}\Delta(XY-aYX)&=&(XY-aYX)\otimes 1+(p-a)e_{p}X\otimes Y\\
&+&(1-aq)e_{q}Y\otimes X+e_{pq}\otimes (XY-aYX).\end{eqnarray*} So,
$p-a=0=1-aq$ which implies that $a=-1$.

``$\Longleftarrow$"$\;\;$ By the proof of necessity,
$$\Delta(I_{2}(1,-1))\subset I_{2}(1,-1)\otimes k\Gamma_{G}(W)+k\Gamma_{G}(W)\otimes I_{2}(1,-1).$$
We only need to show that $S(I_{2}(1,-1))\subset I_{2}(1,-1)$ and
$\varepsilon (I_{2}(1,-1))=0$. The verification of $\varepsilon
(I_{2}(1,-1))=0$ is trivial. And, by Lemma 4.4,
\begin{eqnarray*}
S(XY+YX)&=&S(Y)S(X)+S(X)S(Y)\\
&=&\sum_{i=0}^{n-1}(b\cdot g^{i})\sum_{i=0}^{n-1}(a\cdot
g^{i})+\sum_{i=0}^{n-1}(a\cdot
g^{i})\sum_{i=0}^{n-1}(b\cdot g^{i})\\
&=&\sum_{i=0}^{n-1}(b\cdot g^{i-1})(a\cdot g^{i})
+\sum_{i=0}^{n-1}(a\cdot g^{i-1})(b\cdot g^{i})\\
&=&-\sum_{i=0}^{n-1}v_{-i+2}(XY+YX)v_{-i}.
\end{eqnarray*}
That is, $S(I_{2}(1,-1))\subset I_{2}(1,-1)$.
\end{proof}

Recall  the quiver $\mathbb{Z}_{n}(2)$ given in Example 2.1.
Summarizing the previous arguments, we get the main result for case
1.
\begin{theorem}
Let $H$ be a connected tame graded basic Hopf algebra and
$\Gamma_{G}(W)$ its Gabriel quiver. If $W=(g,g)$, then as a Hopf
algebra,
$$H\cong k\mathbb{Z}_{n}(2)/(X^{2},Y^{2}, XY+YX)$$
for some even $n$. Here $X=\sum_{i=0}^{n-1}g^{i}\cdot (a_{1},e)$ and
$Y=\sum_{i=0}^{n-1}g^{i}\cdot (a_{2},e)$.\end{theorem}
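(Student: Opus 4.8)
The plan is to obtain the theorem as a synthesis of Lemmas 4.4--4.8, since the substantive computations have already been carried out in those lemmas. First I would record the structural input: by hypothesis $W=(g,g)$, so $G=\langle g\rangle$ is cyclic, say of order $n$, and the Gabriel quiver $\Gamma_G(W)$ is exactly the quiver $\mathbb{Z}_n(2)$ of Example 2.1. By Lemma 2.5 the surjection $\pi\colon k\Gamma_G(W)\to H$ is a Hopf surjection, so $\mathrm{Ker}\,\pi$ is an admissible Hopf ideal. Since $H$ is connected, tame and graded, Lemma 4.1 forces the defining relations of its local part to be one of the four families (1)--(4); lifting the generators $x,y$ to $X,Y$ as above---which is without loss of generality by Lemma 2.6, since an admissible Hopf ideal is stable under the $G$-action and that action is diagonalizable, so the degree-one generators may be replaced by the averaged elements $X,Y$---identifies $\mathrm{Ker}\,\pi$ with one of $I_1(a)$, $I_2(m,a)$, $I_3(n)$, $I_4(m)$.

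Next I would invoke the three elimination lemmas to single out the ideal. Lemma 4.6 rules out $I_1(a)$ and $I_3(n)$, and Lemma 4.7 rules out $I_4(m)$, so $\mathrm{Ker}\,\pi$ must be of the form $I_2(m,a)$. Lemma 4.8 then asserts that $I_2(m,a)$ is a Hopf ideal precisely when $m=1$ and $q^{-1}=a=p=-1$. In particular $a=-1$, so the relation $(XY)^m-a(YX)^m$ collapses to $XY+YX$, and the ideal becomes $I_2(1,-1)=(X^2,Y^2,XY+YX)$.

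It remains to extract the parity condition, which I expect to be the only genuinely new point. The scalars $p,q$ entering the $kG$-bimodule structure are $n$-th roots of unity by the normalisation preceding Lemma 4.4, so the constraint $p=q=-1$ can hold only if $-1$ is an $n$-th root of unity, i.e.\ only if $n$ is even. Conversely, for every even $n$ the value $-1$ is a legitimate $n$-th root of unity, the ``if'' direction of Lemma 4.8 guarantees that $(X^2,Y^2,XY+YX)$ is a Hopf ideal of $k\mathbb{Z}_n(2)$, and the quotient is tame by Lemma 4.1. Assembling these facts yields the Hopf-algebra isomorphism $H\cong k\mathbb{Z}_n(2)/(X^2,Y^2,XY+YX)$ for some even $n$, with $X=\sum_{i=0}^{n-1}g^i\cdot(a_1,e)$ and $Y=\sum_{i=0}^{n-1}g^i\cdot(a_2,e)$. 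The main obstacle here is not a calculation but the bookkeeping point flagged in the first paragraph: one must be sure that the reduction to the standardised generators $X,Y$ loses no generality, so that Lemmas 4.6--4.8 genuinely exhaust all admissible Hopf ideals and not merely the distinguished lifts.
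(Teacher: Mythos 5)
Your proposal is correct and follows essentially the same route as the paper: the paper's own proof of Theorem 4.9 is exactly the summary you give---the setup via Proposition 4.3 and Lemma 2.5, the lifting of $x,y$ to $X,Y$ justified by Lemma 2.6, elimination of $I_{1},I_{3},I_{4}$ by Lemmas 4.6--4.7, reduction to $I_{2}(1,-1)=(X^{2},Y^{2},XY+YX)$ by Lemma 4.8, and the evenness of $n$ from $p=q=-1$ being $n$-th roots of unity (which the paper records in Remark 4.10). The only cosmetic inaccuracy is attributing the exclusion of family (5) to Lemma 4.1 alone, whereas the paper needs Lemma 4.2 (failure of the local Frobenius property) for that step.
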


\begin{remark} Note that $p,q$ are $n$-th roots of unity. By Lemma
4.8, $p=q=-1$ and thus $n$ must be an even. That's the reason why
$n$ is assumed to be even in the above theorem. Conversely, for any
cyclic group $G=<g|g^{n}=1>$ with $n$ an even, define the allowable
$kG$-bimodule on $k\mathbb{Z}_{n}(2)$ just as that given at the
beginning of this subsection. Then $k\mathbb{Z}_{n}(2)/(X^{2},Y^{2},
XY+YX)$ is a Hopf algebra by setting $p=q=-1$. Notice that this
indeed gives the answer to Problem 1.1 posted in Section 1 in this
case.
\end{remark}

\begin{example}
\emph{\textbf{(Book Algebras)}} \emph{Let $q$ be an $n$-th primitive
root of unity and $m$ a positive integer satisfying $(m,n)=1$. Let
$H=\textbf{h}(q,m)=k<y,x,g>/(x^{n},y^{n},g^{n}-1,gx-qxg,gy-q^{m}yg,xy-yx)$
 with comultiplication, antipode and counit given by}
$$\Delta(x)=x\otimes g+1\otimes x,\;\;\;\;\Delta(y)=y\otimes 1+g^{m}\otimes y,\;
\;\;\;\Delta(g)=g\otimes g$$
$$S(x)=-xg^{-1},\;\;S(y)=-g^{-m}y,\;\;S(g)=g^{-1},\;\;\varepsilon(x)=
\varepsilon(y)=0.\;\;\varepsilon(g)=1$$ \emph{It is a Hopf algebra
and called book algebra in \cite{AH}. It is a basic algebra since
$\textbf{h}(q,m)/J_{\textbf{h}(q,m)}$ is a commutative semisimple
algebra.} \emph{By Example 5.2 in \cite{L}, only $\textbf{h}(-1,1)$
is tame and the others are wild. }

\emph{Taking $n=2$ in Example 2.1, $\mathbb{Z}_{2}(2)$ is the
following quiver:}

\begin{figure}[hbt]
\begin{picture}(100,50)(0,0)
\put(0,25){\makebox(0,0){$ \bullet v_{e}$}}
\put(10,27){\vector(1,0){75}} \put(10,31){\vector(1,0){75}}
\put(85,23){\vector(-1,0){75}} \put(85,19){\vector(-1,0){75}}

\put(100,25){\makebox(0,0){$ \bullet v_{g}$}}
\end{picture}
\end{figure}
\emph{The allowable $k\mathbb{Z}_{2}$-bimodule structure on
$k\mathbb{Z}_{2}(2)$ is given by }
$$g\cdot v_{e}=v_{e}\cdot g=v_{g},\;\;\;
g\cdot v_{g}=v_{g}\cdot g=v_{e}$$
$$g\cdot (a_{1}, e)=(a_{1},
g)=-(a_{1}, e)\cdot g,\;\;\;g\cdot (a_{2}, e)=(a_{2}, g)=- (a_{2},
e)\cdot g.$$\emph{ Define}
  $\varphi:\;\;k\mathbb{Z}_{2}/(X^{2},Y^{2},XY+YX)\rightarrow \textbf{h}(-1,1)$
  by $$v_{e}\mapsto \frac{1}{2}(1+g),\;\;v_{g}\mapsto \frac{1}{2}(1-g),\;\;
  (a_{1},e)\mapsto xg\frac{1}{2}(1+g),\;\;(a_{2},e)\mapsto y\frac{1}{2}(1+g),$$
  $$(a_{1},g)\mapsto xg\frac{1}{2}(1-g),\;\;\;(a_{2},g)\mapsto y\frac{1}{2}(1-g).$$
\emph{It is straightforward to show that $\varphi$ is an isomorphism
of Hopf algebras, i.e.,}
$$k\mathbb{Z}_{2}/(X^{2},Y^{2},XY+YX)\cong \textbf{h}(-1,1).$$
\end{example}

\subsection{Case 2: $W=(g,h)$ with $g\neq h$}

Fix the covering quiver $\Gamma_{G}(W)$. Using the standard
notations of covering quivers, we can assume that $w_{1}=g$ and
$w_{2}=h$. Just like in the case 1,  we can assume that
$$g\cdot (a_{i}, e)=(a_{i}, g),\;\;\;\;h\cdot (a_{i}, e)=(a_{i}, h),$$
$$(a_{i}, e)\cdot g=q_{i}^{-1}g\cdot (a_{i}, e),\;\;\;\;(a_{i}, e)\cdot h=p_{i}^{-1}h\cdot (a_{i}, e)$$
for $i=1,2$ and $p_{i}^{\textrm{ord}(h)}=q_{i}^{\textrm{ord}(g)}=1$.
Abbreviate $v_{g^{i}h^{j}}$ as $v_{ij}$ for simplicity. For two
indeterminants $x,y$, define the function $e_{x,y}:=
\sum_{g^{i}h^{j}\in G}x^{-i}y^{-j}v_{ij}$. The proof of the
following is identical to that of Lemmas 4.4 and 4.5, so we state it
directly.

\begin{lemma}
$$\Delta(X)=X\otimes 1+ e_{q_{1},p_{1}}\otimes X,\;\;\;\;\Delta(Y)=Y\otimes 1+ e_{q_{2},p_{2}}\otimes Y.$$
$$Xe_{q_{1},p_{1}}=q_{1}e_{q_{1},p_{1}}X,\;\;
Xe_{q_{2},p_{2}}=q_{2}e_{q_{2},p_{2}}X.$$
$$Ye_{q_{1},p_{1}}=p_{1}e_{q_{1},p_{1}}Y,\;\;
Ye_{q_{2},p_{2}}=p_{2}e_{q_{2},p_{2}}Y.$$
\end{lemma}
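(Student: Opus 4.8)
The plan is to transcribe the proofs of Lemmas 4.4 and 4.5 almost verbatim, the only change being that the single cyclic index $i$ is replaced by a pair $(i,j)$ recording the $g$- and $h$-exponents of a group element $g^{i}h^{j}\in G$. First I would derive the coproduct formulas: applying the comultiplication of Lemma 2.3 to $X=\sum_{u\in G}u\cdot a$ with $a=(a_{1},e)$ gives $\Delta(X)=\sum_{u\in G}(u\cdot X\otimes v_{u}+v_{u}\otimes X\cdot u)$. Because $X$ is summed over all of $G$ it is invariant under the left $G$-action, so $u\cdot X=X$ and the first term collapses to $X\otimes\sum_{u}v_{u}=X\otimes 1$. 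For the second term, write $u=g^{i}h^{j}$ and use the relations $a\cdot g=q_{1}^{-1}g\cdot a$ and $a\cdot h=p_{1}^{-1}h\cdot a$ together with the commutativity of $G$ (Proposition 4.3) to get $a\cdot(g^{i}h^{j})=q_{1}^{-i}p_{1}^{-j}(g^{i}h^{j})\cdot a$, whence $X\cdot u=q_{1}^{-i}p_{1}^{-j}X$ after reindexing the sum defining $X$. Thus $\sum_{u}v_{u}\otimes X\cdot u=e_{q_{1},p_{1}}\otimes X$, giving $\Delta(X)=X\otimes 1+e_{q_{1},p_{1}}\otimes X$; the identical computation with $b=(a_{2},e)$ and the pair $(q_{2},p_{2})$ yields the formula for $Y$.

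Next I would handle the commutation identities following Lemma 4.5. The key observation is that $Xv_{ij}$ is the unique arrow issuing from the vertex $v_{g^{i}h^{j}}$, namely $(g^{i}h^{j})^{-1}\cdot a$, which runs $v_{ij}\to v_{i+1,j}$, while $v_{ij}X$ is the unique arrow terminating at $v_{ij}$, namely $(g^{i-1}h^{j})^{-1}\cdot a$. Expanding $Xe_{q_{1},p_{1}}=\sum_{ij}q_{1}^{-i}p_{1}^{-j}\,Xv_{ij}$ and $q_{1}e_{q_{1},p_{1}}X=q_{1}\sum_{ij}q_{1}^{-i}p_{1}^{-j}\,v_{ij}X$ and then reindexing $i\mapsto i-1$ shows the two sums coincide, so $Xe_{q_{1},p_{1}}=q_{1}e_{q_{1},p_{1}}X$; the same computation against $e_{q_{2},p_{2}}$ gives $Xe_{q_{2},p_{2}}=q_{2}e_{q_{2},p_{2}}X$. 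For $Y$ the relevant arrows are of type $b$ and run $v_{ij}\to v_{i,j+1}$, so the reindexing instead shifts the $h$-exponent and extracts the second subscript of $e$, producing $Ye_{q_{1},p_{1}}=p_{1}e_{q_{1},p_{1}}Y$ and $Ye_{q_{2},p_{2}}=p_{2}e_{q_{2},p_{2}}Y$.

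These computations are routine, and I do not expect any genuine obstacle beyond careful index management; the one point to watch is the bookkeeping of which subscript of $e_{x,y}$ governs each commutation scalar. The reason is structural and worth recording to avoid index slips: an $a$-arrow advances the $g$-exponent by one whereas a $b$-arrow advances the $h$-exponent by one, so commuting $X$ (resp.\ $Y$) past $e_{x,y}$ extracts precisely the first (resp.\ second) base of the function. Everything else is a direct transcription of the two earlier lemmas with the single index replaced by the pair $(i,j)$.
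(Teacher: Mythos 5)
Your proof is correct and takes precisely the route the paper intends: the paper omits the argument, stating only that it is ``identical to that of Lemmas 4.4 and 4.5,'' and your transcription with the single cyclic index replaced by the pair $(i,j)$ is exactly that adaptation. Your structural remark --- that an $a$-arrow shifts the $g$-exponent while a $b$-arrow shifts the $h$-exponent, so $X$ (resp.\ $Y$) extracts the first (resp.\ second) base of $e_{x,y}$ --- is the one genuine bookkeeping point in the case $W=(g,h)$, and you have it right.
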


It is also easy to see that Lemmas 4.6 and 4.7 are still true in
this case by using the same method.
\begin{lemma} $I_{1}(a), I_{3}(n)$ and $I_{4}(m)$ are not Hopf
ideals of $k\Gamma_{G}(W)$.
\end{lemma}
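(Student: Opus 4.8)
The plan is to transplant the arguments of Lemmas 4.6 and 4.7 into the present setting, using Lemma 4.11 in place of Lemmas 4.4 and 4.5. Since $G$ is abelian and the comultiplication and commutation formulas of Lemma 4.11 have exactly the same shape as in Case 1, with $e_{q},e_{p}$ replaced by $e_{q_{1},p_{1}},e_{q_{2},p_{2}}$ and the scalars $q,p$ replaced by $q_{1},q_{2},p_{1},p_{2}$, every computation carries over with only notational changes. So I would organize the proof in two halves, treating $I_{1}(a),I_{3}(n)$ together and $I_{4}(m)$ separately.

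For $I_{1}(a)$ and $I_{3}(n)$, I would first compute, via Lemma 4.11,
\begin{align*}
\Delta(XY) &= (X\otimes 1 + e_{q_{1},p_{1}}\otimes X)(Y\otimes 1 + e_{q_{2},p_{2}}\otimes Y)\\
&= XY\otimes 1 + q_{2}e_{q_{2},p_{2}}X\otimes Y + e_{q_{1},p_{1}}Y\otimes X + e_{q_{1},p_{1}}e_{q_{2},p_{2}}\otimes XY.
\end{align*}
Both ideals contain $XY$, so $XY\otimes 1$ and $e_{q_{1},p_{1}}e_{q_{2},p_{2}}\otimes XY$ map to $0$ in $(k\Gamma_{G}(W)/I)\otimes(k\Gamma_{G}(W)/I)$. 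If $I$ were a Hopf ideal, then $\Delta(XY)$ would lie in $I\otimes k\Gamma_{G}(W)+k\Gamma_{G}(W)\otimes I$, forcing the two cross terms $q_{2}e_{q_{2},p_{2}}X\otimes Y + e_{q_{1},p_{1}}Y\otimes X$ to vanish in the quotient as well. But $X$ and $Y$ are linearly independent arrows lying in neither $I_{1}(a)$ nor $I_{3}(n)$, so this is impossible, exactly as in Lemma 4.6.

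For $I_{4}(m)$ I would isolate the coefficient of $\otimes X$ in $\Delta\bigl((XY)^{m}X-(YX)^{m}Y\bigr)$. Expanding $\Delta$ as a product of copies of $\Delta(X)$ and $\Delta(Y)$ and retaining only the terms whose right tensor factor is the single arrow $X$, one selects, for each occurrence of $X$ in the word, the summand $e_{q_{1},p_{1}}\otimes X$ together with the left tensor factor of every other letter. Commuting $e_{q_{1},p_{1}}$ to one side by Lemma 4.11 and reducing modulo $X^{2},Y^{2}$, every mixed term dies (it produces an adjacent $XX$ or $YY$): in $(YX)^{m}Y$ each $X$ is flanked by $Y$ on both sides, so all of its contributions vanish, while in $(XY)^{m}X$ only the two extreme occurrences of $X$ survive. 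One is thus left with
$$\bigl((XY)^{m}X-(YX)^{m}Y\bigr)_{X}\equiv e_{q_{1},p_{1}}\bigl((YX)^{m}+(q_{1}p_{1})^{m}(XY)^{m}\bigr)\pmod{I_{4}(m)},$$
the exact analogue of the identity in Lemma 4.7, and I would conclude that $e_{q_{1},p_{1}}\bigl((YX)^{m}+(q_{1}p_{1})^{m}(XY)^{m}\bigr)\in I_{4}(m)$.

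The one point requiring genuine care, and the step I expect to be the main obstacle, is showing that this last element is not in $I_{4}(m)$. This is a degree/basis argument in the path algebra: the degree-$2m$ component of $I_{4}(m)$ is spanned by paths containing a factor $X^{2}$ or $Y^{2}$, whereas $(XY)^{m}$ and $(YX)^{m}$ are reduced monomials avoiding such factors, and left multiplication by the nonzero idempotent combination $e_{q_{1},p_{1}}$ preserves this reducedness. Hence the element is nonzero modulo $I_{4}(m)$, contradicting the assumption that $I_{4}(m)$ is a Hopf ideal. Everything else is formal and mirrors Case 1, so no new difficulty arises beyond keeping track of the four weights $q_{1},p_{1},q_{2},p_{2}$ correctly.
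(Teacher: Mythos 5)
Your proposal is correct and takes essentially the same approach as the paper: the paper disposes of this lemma by observing that the proofs of Lemmas 4.6 and 4.7 carry over to Case 2 with only notational changes, which is exactly what you execute (and your cross terms $q_{2}e_{q_{2},p_{2}}X\otimes Y+e_{q_{1},p_{1}}Y\otimes X$ and the element $e_{q_{1},p_{1}}\bigl((YX)^{m}+(q_{1}p_{1})^{m}(XY)^{m}\bigr)$ are the correct Case 2 analogues). Your final degree/basis argument for why that element avoids $I_{4}(m)$ actually fills in a detail the paper compresses to ``this is absurd.''
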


It remains to determine when $I_{2}(m,a)$ is a Hopf ideal.

\begin{lemma} If $I_{2}(m,a)$ is a Hopf ideal, then
$q_{1}=p_{2}=-1$ and
$a=(-1)^{m-1}q_{2}^{m}=(-1)^{m-1}p_{1}^{-m}$.\end{lemma}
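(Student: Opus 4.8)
The plan is to follow the pattern of Lemma 4.8, isolating the low-degree components of the coproduct of each generator. Throughout I write $e_{q_1,p_1}$, $e_{q_2,p_2}$ for the grouplike sums of Lemma 4.11, and recall from that lemma that $\Delta(X)=X\otimes 1+e_{q_1,p_1}\otimes X$, $\Delta(Y)=Y\otimes 1+e_{q_2,p_2}\otimes Y$, together with the commutation rules $Xe_{q_1,p_1}=q_1e_{q_1,p_1}X$ and $Ye_{q_2,p_2}=p_2e_{q_2,p_2}Y$. Since $\Delta$ is an algebra map, $\Delta(X^2)=X^2\otimes1+(1+q_1)e_{q_1,p_1}X\otimes X+e_{q_1,p_1}^2\otimes X^2$ and $\Delta(Y^2)=Y^2\otimes1+(1+p_2)e_{q_2,p_2}Y\otimes Y+e_{q_2,p_2}^2\otimes Y^2$. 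Because $I_2(m,a)$ is a graded ideal and the image of $X$ is nonzero in degree $1$ of $k\Gamma_{G}(W)/I_2(m,a)$, the coefficient of $\otimes X$ in $\Delta(X^2)$ must lie in $I_2(m,a)$; as $e_{q_1,p_1}X$ is a nonzero combination of arrows it is not in $I_2(m,a)$, forcing $1+q_1=0$. The same argument for $Y^2$ gives $1+p_2=0$, so $q_1=p_2=-1$.

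Next I would extract the coefficients $(f)_X$ and $(f)_Y$ of $\otimes X$ and $\otimes Y$ in $\Delta(f)$ for $f=(XY)^m-a(YX)^m$, using the expansion convention fixed before Lemma 4.7. A term of $\Delta(f)$ whose second tensor factor is a single arrow arises from the $2m$-fold product $\Delta(X)\Delta(Y)\cdots\Delta(X)\Delta(Y)$ by sending exactly one factor to the right. For $(f)_X$ one keeps only the configurations in which this right-hand letter is an $X$; all of them except one produce a left factor containing a subword $XX$ or $YY$, hence lie in $(X^2,Y^2)\subseteq I_2(m,a)$. A short check shows the only survivors are $e_{q_1,p_1}Y(XY)^{m-1}$ coming from $(XY)^m$ and $(YX)^{m-1}Ye_{q_1,p_1}$ coming from $(YX)^m$, so modulo $I_2(m,a)$ one has $(f)_X\equiv e_{q_1,p_1}Y(XY)^{m-1}-a\,(YX)^{m-1}Ye_{q_1,p_1}$. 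Sliding $e_{q_1,p_1}$ to the right with Lemma 4.11 and using $Y(XY)^{m-1}=(YX)^{m-1}Y$ rewrites this as $\bigl(q_1^{-(m-1)}p_1^{-m}-a\bigr)(YX)^{m-1}Ye_{q_1,p_1}$. Symmetrically, $(f)_Y\equiv\bigl(1-a\,q_2^{-m}p_2^{-(m-1)}\bigr)(XY)^{m-1}Xe_{q_2,p_2}$.

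Since $I_2(m,a)$ is a Hopf ideal, both $(f)_X$ and $(f)_Y$ lie in $I_2(m,a)$. Here I would use that $e_{q_1,p_1}$ and $e_{q_2,p_2}$ are invertible (indeed $e_{x,y}e_{x^{-1},y^{-1}}=1$), so $(YX)^{m-1}Ye_{q_1,p_1}\in I_2(m,a)$ would force $(YX)^{m-1}Y\in I_2(m,a)$; but $(YX)^{m-1}Y$ is an alternating path of degree $2m-1$ containing no $XX$ or $YY$, whereas $I_2(m,a)$ in that degree is spanned by paths possessing such a subword, a contradiction. Hence the scalar coefficients must vanish, giving $a=q_1^{-(m-1)}p_1^{-m}$ and $a=q_2^m p_2^{m-1}$. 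Substituting $q_1=p_2=-1$ from the first step yields $a=(-1)^{m-1}p_1^{-m}=(-1)^{m-1}q_2^m$, which is the assertion.

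The main obstacle is the combinatorial bookkeeping in the second step: one must determine exactly which of the $2m$ coproduct positions can supply the degree-one right factor without creating an $XX$ or $YY$ in the accompanying left factor, and then track the scalar accumulated while commuting $e_{q_1,p_1}$ (resp. $e_{q_2,p_2}$) across the surviving alternating path via Lemma 4.11. The identities $Y(XY)^{m-1}=(YX)^{m-1}Y$ and $X(YX)^{m-1}=(XY)^{m-1}X$ are precisely what allow the two surviving monomials to be matched so that the scalar relation can be read off.
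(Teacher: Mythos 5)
Your proposal is correct and follows essentially the same route as the paper's proof: first forcing $q_1=p_2=-1$ from the $\otimes X$ and $\otimes Y$ coefficients of $\Delta(X^2)$, $\Delta(Y^2)$, then extracting $(f)_X$ and $(f)_Y$ for $f=(XY)^m-a(YX)^m$, where exactly the same two surviving terms appear modulo $(X^2,Y^2)$. The only differences are cosmetic — you slide the grouplike $e_{q_1,p_1}$ (resp. $e_{q_2,p_2}$) to the right where the paper moves it left, and you spell out why the surviving alternating element cannot lie in $I_2(m,a)$ (invertibility of $e_{x,y}$ plus the path-basis degree argument), a point the paper leaves implicit.
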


\begin{proof} It follows by direct computations that
$$\Delta(X^{2})=X^{2}\otimes 1+(1+q_{1})e_{q_{1},p_{1}}X\otimes X+e_{q_{1}^{2},p_{1}^{2}}\otimes X^{2}$$
and $$\Delta(Y^{2})=Y^{2}\otimes 1+(1+p_{2})e_{q_{2},p_{2}}Y\otimes
Y+e_{q_{2}^{2},p_{2}^{2}}\otimes Y^{2}.$$ Thus $1+q_{1}=0=1+p_{2}$
and so $q_{1}=p_{2}=-1$.

Using the notation introduced before Lemma 4.7, we can see that
$$(XY)^{m}_{X}\otimes X\equiv e_{q_{1},p_{1}}(YX)^{m-1}Y\otimes X\;\;\textrm{mod}\;I_{2}(m,a)\otimes X$$
and
\begin{eqnarray*}(YX)^{m}_{X}\otimes X&\equiv& (YX)^{m-1}Ye_{q_{1},p_{1}}\otimes X\\
&=&p_{1}^{m} q_{1}^{m-1}e_{q_{1},p_{1}}(YX)^{m-1}Y\otimes
X\;\;\textrm{mod}\;I_{2}(m,a)\otimes X.\end{eqnarray*}
  Similarly,
\begin{eqnarray*}(XY)^{m}_{Y}\otimes Y&\equiv& (XY)^{m-1}Xe_{q_{2},p_{2}}\otimes Y\\
&=&q_{2}^{m}p_{2}^{m-1} e_{q_{2},p_{2}}(XY)^{m-1}X\otimes
Y\;\;\textrm{mod}\;I_{2}(m,a)\otimes Y\end{eqnarray*} and
$$(YX)^{m}_{Y}\otimes Y\equiv e_{q_{2},p_{2}}(XY)^{m-1}X\otimes Y\;\;\textrm{mod}\;I_{2}(m,a)\otimes X.$$
Thus $q_{2}^{m}p_{2}^{m-1}-a=1-ap_{1}^{m}q_{1}^{m-1}=0$ which
implies that $a=(-1)^{m-1}q_{2}^{m}=(-1)^{m-1}p_{1}^{-m}$.
\end{proof}

In the following, we need to use the functions defined at the
beginning of Section 3.

\begin{lemma} Let $0<l<m$, if $q_{1}=p_{2}=-1$ and
$a=(-1)^{m-1}q_{2}^{m}=(-1)^{m-1}p_{1}^{-m}$, then\\
\emph{(1)}
\begin{eqnarray*}&&(XY)^{m}_{(XY)^{l}}-a(YX)^{m}_{(XY)^{l}}\\
&\equiv& H_{2}(m,l,p_{1}q_{2})e_{(q_{1}q_{2})^{l},(p_{1}p_{2})^{l}}(XY)^{m-l}\\
&+&(-p_{1})^{l-m}H_{3}(m,l,p_{1}q_{2})e_{(q_{1}q_{2})^{l},(p_{1}p_{2})^{l}}(YX)^{m-l}
\;\;\;\;\;\emph{mod}\;I_{2}(m,a)
\end{eqnarray*}\\
\emph{(2)}
\begin{eqnarray*}
&&(XY)^{m}_{(YX)^{l}}-a(YX)^{m}_{(YX)^{l}}\\
&\equiv& -aH_{2}(m,l,p_{1}q_{2})e_{(q_{1}q_{2})^{l},(p_{1}p_{2})^{l}}(YX)^{m-l}\\
&-&a(-q_{2})^{l-m}H_{3}(m,l,p_{1}q_{2})e_{(q_{1}q_{2})^{l},(p_{1}p_{2})^{l}}(XY)^{m-l},
\;\;\;\;\;\emph{mod}\;I_{2}(m,a)
\end{eqnarray*}\\
\emph{ (3)}
\begin{eqnarray*}
&&(XY)^{m}_{Y(XY)^{l}}-a(YX)^{m}_{Y(XY)^{l}}\\
&\equiv&((-1)^{m-1}q_{2}^{m}-a)\sum_{0\leq n_{1}+n_{2}+\cdots +
n_{l}\leq
m-l-1}(p_{1}q_{2})^{\sum_{i=1}^{l}(l+1-i)n_{i}}\\
&&e_{q_{1}^{l}q_{2}^{l+1},p_{1}^{l}p_{2}^{l+1}}X(YX)^{m-l-1},\;\;\;\;\;\emph{mod}\;I_{2}(m,a)
\end{eqnarray*}\\
\emph{(4)}
\begin{eqnarray*}
&&(XY)^{m}_{(XY)^{l}X}-a(YX)^{m}_{(XY)^{l}X}\\
&\equiv&
(1-a(-1)^{m-1}p_{1}^{m})\sum_{0\leq n_{1}+n_{2}+\cdots + n_{l}\leq
m-l-1}(p_{1}q_{2})^{\sum_{i=1}^{l}(l+1-i)n_{i}}\\
&&e_{q_{1}^{l+1}q_{2}^{l},p_{1}^{l+1}p_{2}^{l}}(YX)^{m-l-1}Y,\;\;\;\;\;\emph{mod}\;I_{2}(m,a)
\end{eqnarray*}
\end{lemma}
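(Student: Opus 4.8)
The plan is to compute $\Delta((XY)^m)$ and $\Delta((YX)^m)$ directly and then read off the four prescribed components, all computations taking place modulo $I_2(m,a)$. Since $\Delta$ is an algebra map, Lemma 4.11 gives $\Delta((XY)^m)=(\Delta(X)\Delta(Y))^m$ with $\Delta(X)=X\otimes 1+e_{q_1,p_1}\otimes X$ and $\Delta(Y)=Y\otimes 1+e_{q_2,p_2}\otimes Y$. I would expand this product of $2m$ binomials, viewing the factors as sitting at positions $1,\dots,2m$ (odd positions are $X$-factors, even ones $Y$-factors). Each factor is assigned one of two branches: the \emph{generator branch}, contributing a letter to the left tensor slot and $1$ to the right slot, or the \emph{$e$-branch}, contributing an $e$ to the left slot and a letter to the right slot. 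Thus each resulting term is indexed by the set $S\subseteq\{1,\dots,2m\}$ of $e$-branch positions: the right slot is the subword on $S$, and the left slot is the complementary subword with the chosen $e$'s interspersed. Because $X^2=Y^2=0$ forces any word with a repeated adjacent letter to vanish on both slots, and because both slots have length $<2m$ so the top relation $(XY)^m=a(YX)^m$ never intervenes, only those $S$ survive for which the subword on $S$ \emph{and} its complement are reduced alternating words.

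Next I would fix the target right-slot word $w$. For $w=(XY)^l$ the surviving complements are exactly the two alternating words $(XY)^{m-l}$ and $(YX)^{m-l}$, while for the odd words $Y(XY)^l$ and $(XY)^lX$ they are forced to be $X(YX)^{m-l-1}$ and $(YX)^{m-l-1}Y$ respectively. In each surviving term the chosen $e$'s combine into a single factor: since $G$ is abelian and the $v_{ij}$ are orthogonal idempotents, one has $e_{x,y}e_{x',y'}=e_{xx',yy'}$, and the numbers of $X$- and $Y$-type $e$'s are determined by $w$, so this factor is exactly the $e_{(q_1q_2)^l,(p_1p_2)^l}$ (resp.\ $e_{q_1^lq_2^{l+1},p_1^lp_2^{l+1}}$, $e_{q_1^{l+1}q_2^l,p_1^{l+1}p_2^l}$) displayed in the statement. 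Collecting these $e$'s to the far left, one moves each $e$ past the complement generators preceding it; by the commutation identities of Lemma 4.11 together with $q_1=p_2=-1$, an $e_{q_1,p_1}$ contributes $-1$ past an $X$ and $p_1$ past a $Y$, and an $e_{q_2,p_2}$ contributes $q_2$ past an $X$ and $-1$ past a $Y$.

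The crux is the exponent bookkeeping. I would parametrize the surviving $S$ by the numbers $n_1,\dots,n_l$ of complete $XY$-blocks of the complement lying in the successive gaps of $S$, which range over $0\le n_1+\cdots+n_l\le m-l$. A block in the $i$-th gap lies to the left of the $l+1-i$ outermost chosen $e$-pairs, and crossing it by one $e$-pair yields $(-p_1)(-q_2)=p_1q_2$; hence such a block contributes $(p_1q_2)^{l+1-i}$, and summing over all configurations reproduces $H_2(m,l,p_1q_2)$ for the complement $(XY)^{m-l}$ and, after the analogous gap analysis, $H_3(m,l,p_1q_2)$ for $(YX)^{m-l}$. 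The residual half-blocks permitted by the interleaving constraint contribute only configuration-independent powers of $-1$ and $p_1$ (resp.\ $q_2$), which is the source of the prefactors $(-p_1)^{l-m}$ and $(-q_2)^{l-m}$. The computation for $\Delta((YX)^m)$ runs in parallel, again with base $t=p_1q_2$ (a complete $YX$-block still contributes $(-p_1)(-q_2)$ when crossed by an $e$-pair); combining the two with weight $-a$ and using $a=(-1)^{m-1}q_2^m=(-1)^{m-1}p_1^{-m}$ yields (1) and (2).

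For the odd words $Y(XY)^l$ and $(XY)^lX$ the same enumeration produces a single nested sum, but now the contributions of $(XY)^m$ and of $a(YX)^m$ combine into the scalar factor $(-1)^{m-1}q_2^m-a$ in (3) and $1-a(-1)^{m-1}p_1^m$ in (4); under the standing hypothesis on $a$ these factors are identically zero, which is precisely the content of (3) and (4). The main obstacle I anticipate is the middle step: verifying rigorously that the accumulated commutation scalar of each surviving term equals $(p_1q_2)^{\sum_{i=1}^l(l+1-i)n_i}$ times a configuration-independent sign, and that the surviving $S$ are in bijection with the index set of $H_2$. I would secure this either through the gap bijection just described or, as a cross-check, by induction on $m$ via $(XY)^m=(XY)^{m-1}(XY)$, extracting components from $\Delta((XY)^{m-1})\,\Delta(XY)$; the latter makes the two-term splitting in the definition of $H_3$ appear naturally. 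Lemma 3.1 is not needed inside this computation—its role is only to let the subsequent step rewrite $H_2$ and $H_3$ as $H_1$.
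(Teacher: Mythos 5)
Your proposal is correct and takes essentially the same route as the paper's own proof: expand $\Delta((XY)^m)$ and $\Delta((YX)^m)$ binomially, keep only the terms whose left and right slots are alternating words modulo $I_2(m,a)$, parametrize the survivors by the gap counts $n_1,\ldots,n_l$, pull the $e$'s to the far left via Lemma 4.11 (each crossing of a block contributing $p_1q_2$, with the boundary crossings giving the prefactors $(-p_1)^{l-m}$, $(-q_2)^{l-m}$), and recognize the sums as $H_2$ and $H_3$. The paper carries this out only for part (1) and declares (2)--(4) analogous, just as you do, so your write-up matches both the method and the level of detail of the original.
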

\begin{proof} We only prove (1) because the others can be proved similarly.
 Since $X^{2}, Y^{2}\in
I_{2}(m,a)$, up to modulo $I_{2}(m,a)$, $X$ and $Y$ should appear
alternately in the left items in $(XY)^{m}_{(XY)^{l}}$. Thus there
are two possibilities: starting with $X$ or with $Y$. By this
observation, the items starting with $X$ are just
$$\sum_{0\leq n_{1}+n_{2}+\cdots n_{l}\leq m-l}
 (XY)^{n_{1}}e_{q_{1}q_{2},p_{1}p_{2}}(XY)^{n_{2}}e_{q_{1}q_{2},p_{1}p_{2}}\cdots
(XY)^{n_{l}}e_{q_{1}q_{2},p_{1}p_{2}}(XY)^{n_{l+1}}.$$ By iterated
application of Lemma 4.11, this item equals to
$$\sum_{0\leq n_{1}+n_{2}+\cdots n_{l}\leq m-l}
(p_{1}q_{2})^{n_{1}}(p_{1}q_{2})^{n_{1}+n_{2}}\cdots
(p_{1}q_{2})^{n_{1}+n_{2}+\cdots n_{l}}
e_{(q_{1}q_{2})^{l},(p_{1}p_{2})^{l}}(XY)^{m-l}$$ and thus equals to
$$H_{2}(m,l,p_{1}q_{2})e_{(q_{1}q_{2})^{l},(p_{1}p_{2})^{l}}(XY)^{m-l}.$$
Similarly, the items starting with $Y$ are just
$$\sum e_{q_{1},p_{1}}(YX)^{n_{1}}e_{q_{1}q_{2},p_{1}p_{2}}(YX)^{n_{2}}e_{q_{1}q_{2},p_{1}p_{2}}\cdots
(YX)^{n_{l-1}}e_{q_{1}q_{2},p_{1}p_{2}}(YX)^{n_{l}}e_{q_{2},p_{2}}$$
which equals to
\begin{eqnarray*}(q_{2}p_{2})^{m-l}\sum_{n_{1}+n_{2}+\cdots +n_{l-1}\leq
m-l}&& (p_{1}q_{2})^{n_{1}}(p_{1}q_{2})^{n_{1}+n_{2}}\cdots
(p_{1}q_{2})^{n_{1}+n_{2}+\cdots n_{l-1}}\\
&\cdot&
e_{(q_{1}q_{2})^{l},(p_{1}p_{2})^{l}}(XY)^{m-l}.\end{eqnarray*}
Meanwhile, all items in $(YX)^{m}_{(XY)^{l}}$ start from $Y$:
$$Y\sum (XY)^{n_{1}}e_{q_{1}q_{2},p_{1}p_{2}}(XY)^{n_{2}}e_{q_{1}q_{2},p_{1}p_{2}}\cdots
(XY)^{n_{l}}e_{q_{1}q_{2},p_{1}p_{2}}(XY)^{n_{l+1}}X$$ which equals
to
\begin{eqnarray*}(p_{1}p_{2})^{l}\sum_{0\leq n_{1}+n_{2}+\cdots n_{l}\leq
m-l-1}&& (p_{1}q_{2})^{n_{1}}(p_{1}q_{2})^{n_{1}+n_{2}}\cdots
(p_{1}q_{2})^{n_{1}+n_{2}+\cdots n_{l}}\\
&\cdot&
e_{(q_{1}q_{2})^{l},(p_{1}p_{2})^{l}}(YX)^{m-l}.\end{eqnarray*} Note
that $ q_{1}=p_{2}=-1$ and $a=(-1)^{m-1}p_{1}^{-m}$,
\begin{eqnarray*}
&&(q_{2}p_{2})^{m-l}\sum_{n_{1}+n_{2}+\cdots +n_{l-1}\leq m-l}
(p_{1}q_{2})^{n_{1}}(p_{1}q_{2})^{n_{1}+n_{2}}\cdots
(p_{1}q_{2})^{n_{1}+n_{2}+\cdots n_{l-1}}\\
&=&(-q_{2})^{m-l}\sum_{n_{1}+n_{2}+\cdots +n_{l-1}\leq
m-l}(p_{1}q_{2})^{\sum_{i=1}^{l-1}(l-i)n_{i}}\;\;\;\;\;\;(\star).
\end{eqnarray*}
And,
\begin{eqnarray*}&&-a(p_{1}p_{2})^{l}\sum_{0\leq n_{1}+n_{2}+\cdots n_{l}\leq m-l-1}
(p_{1}q_{2})^{n_{1}}(p_{1}q_{2})^{n_{1}+n_{2}}\cdots
(p_{1}q_{2})^{n_{1}+n_{2}+\cdots n_{l}}\\
&=&(-p_{1})^{l-m}\sum_{n_{1}+n_{2}+\cdots +n_{l}\leq
m-l-1}(p_{1}q_{2})^{\sum_{i=1}^{l}(l+1-i)n_{i}}\;\;\;\;\;\;(\ast).
\end{eqnarray*}
By the definition of $H_{3}(m,l,t)$, we see that
$$(\star)-(\ast)=(-p_{1})^{l-m}H_{3}(m,l,p_{1}q_{2})$$
and (1) is proved.
\end{proof}

\begin{proposition} $I_{2}(m,a)$ is a Hopf ideal if and only if

\emph{(1)} $q_{1}=p_{2}=-1$ and
$a=(-1)^{m-1}q_{2}^{m}=(-1)^{m-1}p_{1}^{-m}$;

\emph{(2)} $p_{1}q_{2}$ is an $m$-th primitive root of unity.
\end{proposition}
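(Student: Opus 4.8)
The plan is to characterise the Hopf-ideal property of $I_2(m,a)$ by checking, one at a time, the three requirements $\varepsilon(I_2(m,a))=0$, $\Delta(I_2(m,a))\subseteq I_2(m,a)\otimes k\Gamma_G(W)+k\Gamma_G(W)\otimes I_2(m,a)$, and $S(I_2(m,a))\subseteq I_2(m,a)$. The counit condition is immediate from $\varepsilon(X)=\varepsilon(Y)=0$. For the comultiplication I would use the component expansion $\Delta(f)=f\otimes 1+\sum_P(f)_P\otimes P$ fixed before Lemma 4.7, where $P$ runs over the reduced alternating words $(XY)^l,(YX)^l,(XY)^lX,Y(XY)^l$. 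Apart from the top pair $(XY)^m,(YX)^m$, which the defining relation links, these words project to linearly independent elements of $k\Gamma_G(W)/I_2(m,a)$; hence $\Delta(f)$ lies in the sum of ideals above if and only if each component $(f)_P$ lies in $I_2(m,a)$, the contributions of $(XY)^m$ and $(YX)^m$ being taken together. Applied to the generators $X^2$ and $Y^2$, whose coproducts are $X^2\otimes 1+(1+q_1)e_{q_1,p_1}X\otimes X+e_{q_1^2,p_1^2}\otimes X^2$ and its analogue (computed as in the proof of Lemma 4.13), this forces $q_1=p_2=-1$. Thus everything reduces to the single relation $f=(XY)^m-a(YX)^m$.

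For necessity I would first quote Lemma 4.13 to get condition (1). With (1) in hand, Lemma 4.14 supplies all four families of components of $\Delta(f)$. Parts (3) and (4) carry the scalar factors $(-1)^{m-1}q_2^m-a$ and $1-a(-1)^{m-1}p_1^m$, both zero by (1), so the odd components (and, by the same derivation at $l=0$, the length-one words $X,Y$) already lie in $I_2(m,a)$. Parts (1) and (2) are scalar multiples of $e_{(q_1q_2)^l,(p_1p_2)^l}(XY)^{m-l}$ and $e_{(q_1q_2)^l,(p_1p_2)^l}(YX)^{m-l}$; for $0<l<m$ the words $(XY)^{m-l}$ and $(YX)^{m-l}$ are independent and nonzero in the quotient, and the factor $e_{(q_1q_2)^l,(p_1p_2)^l}$ merely rescales each path by a nonzero scalar, so the coideal requirement forces $H_2(m,l,p_1q_2)=H_3(m,l,p_1q_2)=0$. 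By Lemma 3.1 these equal $H_1(m,l,p_1q_2)$, so $H_1(m,l,p_1q_2)=0$ for every $0<l<m$, and Proposition 3.2 gives that $p_1q_2$ is a primitive $m$-th root of unity, which is (2). Note that (1) already forces $(p_1q_2)^m=1$, so (2) is exactly the upgrade from ``root of unity'' to ``primitive''.

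For sufficiency I would reverse the same computation: assuming (1) and (2), parts (3),(4) of Lemma 4.14 vanish by (1) and parts (1),(2) vanish because $H_1(m,l,p_1q_2)=0$ for all $0<l<m$ by Proposition 3.2, so every interior component of $\Delta(f)$ lies in $I_2(m,a)$. The two boundary contributions are disposed of directly: the $l=0$ term is $f\otimes 1\in I_2(m,a)\otimes k\Gamma_G(W)$, while the top-degree terms of $\Delta((XY)^m)$ and $\Delta((YX)^m)$ are $e_{(q_1q_2)^m,(p_1p_2)^m}\otimes(XY)^m$ and $e_{(q_1q_2)^m,(p_1p_2)^m}\otimes(YX)^m$, whose combination is $e_{(q_1q_2)^m,(p_1p_2)^m}\otimes f\in k\Gamma_G(W)\otimes I_2(m,a)$. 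Together with $q_1=p_2=-1$ killing the $X^2,Y^2$ coproduct terms, this yields the comultiplication condition, and $\varepsilon$ is trivial.

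The step I expect to be the genuine obstacle is the antipode condition $S(I_2(m,a))\subseteq I_2(m,a)$, which does not follow from the coefficient bookkeeping. Here I would argue by a direct computation in the spirit of the sufficiency half of Lemma 4.8: expand $S(X^2)=S(X)^2$, $S(Y^2)=S(Y)^2$ and $S((XY)^m-a(YX)^m)$ using the antipode values $S(X),S(Y)$ from the Case~2 analogue of Lemma 4.4, and then push the resulting degree-one elements past the factors $e_{x,y}$ by means of the commutation relations of Lemma 4.11 until, using $q_1=p_2=-1$ and $a=(-1)^{m-1}q_2^m=(-1)^{m-1}p_1^{-m}$, the defining generators of $I_2(m,a)$ reappear. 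Verifying that no stray term survives this reduction is the most delicate bookkeeping in the argument.
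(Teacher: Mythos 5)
Your proposal is correct and follows essentially the same route as the paper: necessity by quoting Lemma 4.13 for condition (1) and then forcing $H_{2}(m,l,p_{1}q_{2})=0$ for $0<l<m$ from Lemma 4.14, concluding via Lemma 3.1 and Proposition 3.2, and sufficiency by reversing that computation and checking $\varepsilon$ and $S$ by the same direct calculation as in Lemma 4.8. The additional justification you supply (linear independence of the alternating words modulo $I_{2}(m,a)$, the pairing of the top components $(XY)^{m}$ and $(YX)^{m}$, and the explicit boundary terms) is detail the paper leaves implicit rather than a different method.
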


\begin{proof} ``$\Longrightarrow$"$\;\;$ (1) is just Lemma 4.13. By
Lemma 4.14, $H_{2}(m,l,p_{1}q_{2})=0$ for all $0< l<m$. Lemma 3.1
and Proposition 3.2 give us the desired conclusion.

``$\Longleftarrow$"$\;\;$ Using Lemma 3.1 and Proposition 3.2 again,
$H_{2}(m,l,p_{1}q_{2})=H_{3}(m,l,p_{1}q_{2})=0$. Then Lemmas 4.13
and 4.14 imply
$$\Delta(I_{2}(m,a))\subset I_{2}(m,a)\otimes k\Gamma_{G}(W)+k\Gamma_{G}(W)\otimes I_{2}(m,a).$$
By almost the same proof as in Lemma 4.8, we can show that
$$\varepsilon(I_{2}(m,a))=0,\;\;\;\;S(I_{2}(m,a))\subset
I_{2}(m,a).$$\end{proof}

\begin{theorem} Let $H$ be a connected tame graded basic Hopf
algebra and $\Gamma_{G}(W)$ its Gabriel quiver. If $W=(g,h)$ with
$g\neq h$, then under the assumption before the Lemma 4.11,
$$H\cong k\Gamma_{G}(W)/(X^{2},Y^{2},(XY)^{m}-(-1)^{m-1}q_{2}^{m}(YX)^{m})$$
as Hopf algebras for some $m> 0$.\end{theorem}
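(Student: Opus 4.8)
The plan is to assemble the results already established for this case, since the theorem is in essence a bookkeeping consequence of Lemmas 4.12 and 4.13 (equivalently, Proposition 4.15). First I would invoke Lemma 2.5: because $H$ is graded, the canonical surjection $\pi\colon k\Gamma_{G}(W)\to H$ is a morphism of Hopf algebras and $\ker\pi$ is an admissible Hopf ideal of $k\Gamma_{G}(W)$. Since $H$ is tame, Lemma 4.1 together with Lemma 4.2 (which discards case (5)) tells us that $\mathrm{gr}H=H$ is presented by one of the ideals (1)--(4); lifting $x\mapsto X$ and $y\mapsto Y$ as in the paragraph following Proposition 4.3, and using Lemma 2.6 to guarantee that a Hopf-ideal lift may be taken $G$-stable, identifies $\ker\pi$ with one of $I_{1}(a),\,I_{2}(m,a),\,I_{3}(n),\,I_{4}(m)$.

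Next I would eliminate all but one candidate. We are in the case $W=(g,h)$ with $g\neq h$, so Lemma 4.12 shows that $I_{1}(a)$, $I_{3}(n)$ and $I_{4}(m)$ are never Hopf ideals; as $\ker\pi$ is a Hopf ideal, the only surviving possibility is
$$\ker\pi=I_{2}(m,a)=(X^{2},\,Y^{2},\,(XY)^{m}-a(YX)^{m})$$
for some integer $m>0$ and some $0\neq a\in k$. It then remains only to pin down the scalar $a$. Because $\ker\pi$ is a genuine Hopf ideal, Proposition 4.15 applies; its first condition---already isolated in Lemma 4.13---forces $q_{1}=p_{2}=-1$ and $a=(-1)^{m-1}q_{2}^{m}$. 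Substituting this value of $a$ into the displayed ideal yields
$$\ker\pi=(X^{2},\,Y^{2},\,(XY)^{m}-(-1)^{m-1}q_{2}^{m}(YX)^{m}),$$
and hence $H=k\Gamma_{G}(W)/\ker\pi$ carries exactly the asserted presentation as a Hopf algebra.

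Since all of the analytic content lives in the preceding lemmas, the only genuinely delicate point in this assembly is the identification of $\ker\pi$ with a lifted ideal $I_{j}$ rather than merely with something abstractly isomorphic to $k\langle x,y\rangle/I\times(kG)^{\ast}$: this is precisely where Lemma 2.6 is essential, as it ensures that the defining relations of the $G$-stable ideal $\ker\pi$ are exactly the $G$-orbit of the lifted generators $X,Y$, so that no spurious extra relations can enter. I would also remark that the second clause of Proposition 4.15---that $p_{1}q_{2}$ be an $m$-th primitive root of unity---is a constraint on the \emph{existence} of such an $H$ but does not alter the \emph{form} of the relation, which is why the theorem records the presentation only ``for some $m>0$.''
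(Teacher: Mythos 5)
Your proposal is correct and takes essentially the same route as the paper: Theorem 4.16 appears there without a separate proof, precisely as the summary of the chain you assemble --- the Hopf surjection of Lemma 2.5, the lifting of $x,y$ to $X,Y$ after Proposition 4.3 (justified via Lemma 2.6), the elimination of $I_{1}(a)$, $I_{3}(n)$, $I_{4}(m)$ by Lemma 4.12, and the determination of $a=(-1)^{m-1}q_{2}^{m}$ by Lemma 4.13/Proposition 4.15. Your closing observation that the primitive-root condition in Proposition 4.15 constrains existence rather than the form of the relation is also exactly how the paper treats it (cf.\ Remark 4.17).
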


\begin{remark} \emph{(1)} Lemma 4.8 and Proposition 4.15 indeed give us the method to construct
all possible connected tame graded basic Hopf algebras and thus give
us some new examples of finite quantum groups.

\emph{(2)} If $I_{2}(m,a)$ is Hopf ideal of $k\Gamma_{G}(W)$ for
some $m$, then $m$ is factor of $l.c.m(\emph{ord}(g),
\emph{ord}(h))$, i.e., $m|\;l.c.m(\emph{ord}(g), \emph{ord}(h))$.
The reason is that $(p_{1}q_{2})^{l.c.m(\emph{ord}(g),
\emph{ord}(h))}=1$ and $p_{1}q_{2}$ is an $m$-th primitive root of
unity. Conversely, assume that $G$ is an abelian group generated by
$g,\;h\;$ ($g\neq h$) with $m|\;l.c.m(\emph{ord}(g),
\emph{ord}(h))$. Define the allowable $kG$-bimodule on
$k\Gamma_{G}((g,h))$ through the way given at the beginning of this
subsection. Let $q_{1}=p_{2}=-1$.  By a suitable choice of
$p_{1},\;q_{2}$, we can assume that $p_{1}q_{2}$ is an $m$-th
primitive root of unity. Now
$k\Gamma_{G}(W)/(X^{2},Y^{2},(XY)^{m}-(-1)^{m-1}q_{2}^{m}(YX)^{m})$
is an Hopf algebra. Notice that this is also give the answer to
Problem 1.1 in this case.
\end{remark}
At the end of this subsection, we recall a familiar example.

\begin{example} \emph{\textbf{(Tensor products of Taft algebras) }
 Let $T_{n^{2}}(q),\;T_{m^{2}}(q')$ be two Taft
algebras.  It is known that $T_{n^{2}}(q)\otimes_{k} T_{m^{2}}(q')$
is tame if and only if $m=n=2$ (see Example 5.1 in \cite{L}). Let
$G=\mathbb{Z}_{2}\times \mathbb{Z}_{2}\cong
<g,h\;|\;g^{2}=h^{2}=1,gh=hg>$ and the covering quiver
$\Gamma_{G}((g,h))$ is the following graph:}
\begin{figure}[hbt]
\begin{picture}(100,100)(0,0)
\put(50,100){\makebox(0,0){$\bullet$}}
\put(50,0){\makebox(0,0){$\bullet$}}
\put(0,50){\makebox(0,0){$\bullet$}}\put(100,50){\makebox(0,0){$\bullet$}}

\put(57,98){\vector(1,-1){40}} \put(92,55){\vector(-1,1){40}}
\put(7,48){\vector(1,-1){40}} \put(42,5){\vector(-1,1){40}}

\put(41,98){\vector(-1,-1){40}} \put(7,55){\vector(1,1){40}}
\put(91,48){\vector(-1,-1){40}} \put(57,5){\vector(1,1){40}}

\end{picture}
\end{figure}

\emph{Through} $$(a_{1},e)\cdot g=-g\cdot
(a_{1},e),\;\;(a_{2},e)\cdot g=g\cdot (a_{2},e),$$
$$(a_{1},e)\cdot h=h\cdot (a_{1},e),\;\;(a_{2},e)\cdot h=-h\cdot
(a_{2},e),$$ $k\Gamma_{G}((g,h))$ \emph{is a Hopf algebra. Define
$\varphi:\;k\Gamma_{G}((g,h))/(X^{2},Y^{2},XY-YX)\rightarrow
T_{2^{2}}(-1)\otimes T_{2^{2}}(-1)$ by}
$$v_{e}\mapsto \frac{1}{2}(1+g)\frac{1}{2}(1+h),\;\;v_{g}\mapsto \frac{1}{2}(1-g)\frac{1}{2}(1+h),\;\;$$
$$v_{h}\mapsto \frac{1}{2}(1+g)\frac{1}{2}(1-h),\;\;v_{gh}\mapsto \frac{1}{2}(1-g)\frac{1}{2}(1-h),\;\;$$
$$(a_{1},e)\mapsto xg\frac{1}{2}(1+g)\frac{1}{2}(1+h),\;\;(a_{2},e)\mapsto yh\frac{1}{2}(1+g)\frac{1}{2}(1+h),\;\;$$
$$(a_{1},g)\mapsto xg\frac{1}{2}(1-g)\frac{1}{2}(1+h),\;\;(a_{2},g)\mapsto yh\frac{1}{2}(1-g)\frac{1}{2}(1+h),\;\;$$
$$(a_{1},h)\mapsto xg\frac{1}{2}(1+g)\frac{1}{2}(1-h),\;\;(a_{2},h)\mapsto yh\frac{1}{2}(1+g)\frac{1}{2}(1-h),\;\;$$
$$(a_{1},gh)\mapsto xg\frac{1}{2}(1-g)\frac{1}{2}(1-h),\;\;(a_{2},gh)\mapsto yh\frac{1}{2}(1-g)\frac{1}{2}(1-h).\;\;$$
\emph{It is straightforward to show that $\varphi$ induces an
isomorphism of Hopf algebras, i.e.,}
$$T_{2^{2}}(-1)\otimes T_{2^{2}}(-1)\cong k\Gamma_{G}((g,h))/(X^{2},Y^{2},XY-YX)$$
\emph{as Hopf algebras.}
\end{example}

\section{Classification-General Case}

Let $H$ be a graded basic Hopf algebra and $\Gamma_{G}(W)$  its
covering quiver. Let $N\subset G$ be the subgroup generated by $W$.
It is known that $k$ is an $H$-module through the counit map
$\varepsilon: \;H\rightarrow k$. We say a block of $H$ is the
\emph{principle block} if $k$, as a simple $H$-module, belongs to
this block. We denote this block by $H_{0}$.

\begin{proposition} \emph{(1)} $N$ is a normal subgroup of $G$;

\emph{(2)} As an algebra, $$H\cong H_{0}\oplus H_{0}\oplus \cdots
\oplus H_{0}$$ for $|G/N|$ copies of $H_{0}$;

\emph{(3)} $H_{0}$ is a Hopf algebra and is a Hopf quotient of $H$.
\end{proposition}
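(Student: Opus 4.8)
The plan is to treat the three assertions in turn, reading off everything from the covering-quiver description $H\cong k\Gamma_{G}(W)/I$ (Lemma 2.5), the block theory of basic algebras, and the explicit formulas of Lemma 2.4.

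For (1) I would use only that $W$ is a weight sequence. Since $W$ and $(gw_{1}g^{-1},\ldots,gw_{n}g^{-1})$ coincide up to a permutation for every $g\in G$, conjugation by $g$ merely permutes the generators of $N=\langle W\rangle$, so $gNg^{-1}=\langle gw_{1}g^{-1},\ldots,gw_{n}g^{-1}\rangle=\langle W\rangle=N$. Hence $N\trianglelefteq G$ and $G/N$ is a group. For (2) I would first identify the blocks of $H$ with the connected components of its Gabriel quiver $\Gamma_{G}(W)$: for a basic algebra $kQ/I$ the primitive central idempotents are the sums of the vertex idempotents over the connected components of $Q$. In $\Gamma_{G}(W)$ the arrows out of $v_{d}$ end at the vertices $v_{w_{i}d}$, so the component of $v_{f}$ is exactly $\{v_{g}: g\in Nf\}$; by (1) these components are indexed by $G/N$, giving $|G/N|$ of them. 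Writing $e_{C}=\sum_{g\in C}v_{g}$ for the central idempotent of a coset $C\in G/N$ and $e_{0}=\sum_{n\in N}v_{n}$ for the principal one, the principal block is $H_{0}=e_{0}H$.

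To show every block is isomorphic to $H_{0}$ I would invoke the left $G$-action. By the construction in the proof of Lemma 2.5 each $g\in G$ acts on $k\Gamma_{G}(W)$ as an algebra automorphism sending $v_{f}\mapsto v_{fg^{-1}}$, and by Lemma 2.6 this action descends to $H$. It carries $e_{C}$ to $e_{Cg^{-1}}$, hence maps the block $e_{C}H$ isomorphically onto $e_{Cg^{-1}}H$; letting $g$ run over coset representatives yields $e_{C}H\cong e_{0}H=H_{0}$ for all $C$, which is (2).

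For (3) I would prove that the complementary ideal $K=(1-e_{0})H$ is a Hopf ideal, so that $H_{0}\cong H/K$ becomes a Hopf quotient of $H$ via the algebra surjection $h\mapsto e_{0}h$ (an algebra map because $e_{0}$ is central). The counit condition is immediate: $\varepsilon(v_{g})=\delta_{g,e}$ gives $\varepsilon(e_{0})=1$, so $\varepsilon(1-e_{0})=0$ and $\varepsilon(K)=0$. For the antipode, $S(v_{h})=v_{h^{-1}}$ together with the fact that $n\mapsto n^{-1}$ permutes $N$ gives $S(e_{0})=e_{0}$, whence $S(K)=S(H)(1-e_{0})=(1-e_{0})H=K$. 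The delicate point is comultiplication. Because $H$ is radically graded, its degree-zero part $H/J_{H}\cong(kG)^{\ast}$ is a sub-Hopf algebra on which $\Delta(v_{h})=\sum_{g\in G}v_{hg^{-1}}\otimes v_{g}$ holds exactly (Lemma 2.4). Summing over $n\in N$ and using that $e_{0}v_{g}=v_{g}$ iff $g\in N$, I would compute $(e_{0}\otimes e_{0})\Delta(e_{0})=e_{0}\otimes e_{0}$, and therefore $(e_{0}\otimes e_{0})\Delta(1-e_{0})=0$. Multiplying this identity on the right by $\Delta(h)$ gives $(e_{0}\otimes e_{0})\Delta((1-e_{0})h)=0$ for all $h$; since $K\otimes H+H\otimes K$ is exactly the kernel of multiplication by the central idempotent $e_{0}\otimes e_{0}$, this is precisely $\Delta(K)\subseteq K\otimes H+H\otimes K$. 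Thus $K$ is a Hopf ideal and (3) follows.

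I expect the \emph{main obstacle} to be this last comultiplication step: the key input is that the degree-zero coset idempotents $e_{C}$ span a sub-Hopf algebra isomorphic to $(k[G/N])^{\ast}$, which forces the exact formula $(e_{0}\otimes e_{0})\Delta(e_{0})=e_{0}\otimes e_{0}$. Justifying that one may use the \emph{exact} degree-zero comultiplication rather than only its reduction modulo $J_{H\otimes H}$ (legitimate here because $\Delta$ is graded and $\Delta(H/J_{H})\subseteq (H/J_{H})\otimes(H/J_{H})$) is what makes the argument go through, and everything else reduces to routine bookkeeping with the cosets of $N$.
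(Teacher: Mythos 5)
Your proposal is correct, and for parts (1) and (2) it coincides with the paper's argument up to notation: the paper likewise gets normality immediately from conjugation-stability of $W$, identifies the connected components of $\Gamma_{G}(W)$ with the cosets of $N$, and uses the left $G$-action together with Lemma 2.6 (stability of $I$ under the $G$-action) to carry the principal component onto any other component; your formulation via the central idempotents $e_{C}$ and the induced algebra automorphisms of $H$ is the same mechanism in different clothing. The real divergence is in (3). The paper argues upstairs in the path algebra: from $\Delta(v_{h})=\sum_{g\in G}v_{hg^{-1}}\otimes v_{g}$ it observes that $\sum_{g\notin N}kv_{g}$ generates a Hopf ideal of $k\Gamma_{G}(W)$, hence $H_{0}\cong k\Gamma_{G}(W)/(I,\sum_{g\notin N}kv_{g})$ is a Hopf quotient of $H$; there the comultiplication, counit and antipode formulas are exact by construction (the Green--Solberg structure of Lemma 2.3), so no grading argument is needed. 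You argue downstairs in $H$ itself, showing $K=(1-e_{0})H$ is a Hopf ideal via the identity $(e_{0}\otimes e_{0})\Delta(e_{0})=e_{0}\otimes e_{0}$ and the identification of $K\otimes H+H\otimes K$ with the kernel of multiplication by the central idempotent $e_{0}\otimes e_{0}$ --- a clean, intrinsic idempotent calculus. Both are valid; but note that the step you single out as the main obstacle (justifying the exact degree-zero formulas in $H$ by gradedness of $\Delta$, rather than only modulo $J_{H\otimes H}$ as in Lemma 2.4) can be bypassed entirely by quoting Lemma 2.5: since $H$ is graded, $H\cong k\Gamma_{G}(W)/I$ as Hopf algebras, so the exact path-algebra formulas for $\Delta$, $S$ and $\varepsilon$ descend to $H$ verbatim. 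That is exactly why the paper's version of (3) is a two-line observation, while yours needs the auxiliary discussion of the graded structure; what your route buys in exchange is a proof that never leaves $H$ and makes explicit that the complement of the principal block is cut out by a single central idempotent.
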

\begin{proof} (1) is obvious since $W$ is stable under the
conjugation.

Now we prove (2). By the Gabriel theorem for Hopf algebras (Lemma
2.5), there is a Hopf algebra isomorphism
$$k\Gamma_{G}(W)/I\cong H$$
with $I$ an admissible ideal of $k\Gamma_{G}(W)$. By the proof of
Lemma 2.5, the Hopf structure on $k\Gamma_{G}(W)$ is given by an
allowable $kG$-bimodule. Denote the connected component of
$\Gamma_{G}(W)$ containing $v_{e}$ by $\Gamma_{G}(W)^{\circ}$. By
the definition of covering quivers, every connected component of
$\Gamma_{G}(W)$ is indeed  $g\cdot \Gamma_{G}(W)^{\circ}$ for some
$g\in G$. It is easy to see that $g\cdot
\Gamma_{G}(W)^{\circ}=\Gamma_{G}(W)^{\circ}$ if and only if $g\in
N$. Thus there are $|G/N|$ numbers of connected components.

Let $I^{\circ}:=k\Gamma_{G}(W)^{\circ}\cap I$ and thus $H_{0}\cong
k\Gamma_{G}(W)^{\circ}/I^{\circ}$.  Using Lemma 2.6, $I$ is stable
under $G$-action. By the definition of allowable $kG$-bimodule,
$k(g\cdot \Gamma_{G}(W)^{\circ})\cap I$ is exactly $g\cdot
I^{\circ}$. This fact implies any block of $H$ must equal to $g\cdot
H_{0}$ and thus is isomorphic to $H_{0}$. (2) is proved.

At last, let's prove (3). For $h\in G$, it is known that
$\Delta(v_{h})=\sum_{g\in G}v_{g}\otimes v_{g^{-1}h}$. This implies
$\sum_{g\not\in N}kv_{g}$ generates a Hopf ideal of
$k\Gamma_{G}(W)$. Thus
$$H_{0}\cong k\Gamma_{G}(W)/(I,\Sigma_{g\not\in N}kv_{g})$$
is a Hopf algebra which clearly is a Hopf quotient of $H$.
\end{proof}

The structure of tame graded basic Hopf algebras can be determined
now. For a Hopf algebra $H$, let $H^{\ast}$ denote its dual.

\begin{theorem} Let $H$ be a tame graded basic Hopf algebra and $\Gamma_{G}(W)$  its
Gabriel quiver. Denote by $H_{0}$ the principle block of $H$ and
$\Gamma_{G}(W)^{\circ}$ the connected component of $\Gamma_{G}(W)$
containing $v_{e}$. Let $N\subset G$ be the subgroup generated by
$W$.

\emph{(1)} If $W=(g,g)$ for some $g\in G$, then as an algebra,
$$H\cong H_{0}\oplus H_{0}\oplus \cdots \oplus
H_{0}$$ for $|G/N|$ copies of $H_{0}$ and
$$H\cong (k(G/N))^{\ast}\#_{\sigma}(k \Gamma_{G}(W)^{\circ}/(X^{2},Y^{2},XY+YX))$$
as Hopf algebras where $X=\sum_{t\in N}t\cdot (a_{1},e)$ and
$Y=\sum_{t\in N}t\cdot (a_{2},e)$.

\emph{(2)} With the notations given in Subsection 4.2. If $W=(g,h)$
for some $g,h\in G$ and $g\neq h$, then as an algebra,
$$H\cong H_{0}\oplus H_{0}\oplus \cdots \oplus
H_{0}$$ for $|G/N|$ copies of $H_{0}$ and
$$H \cong (k(G/N))^{\ast}\#_{\sigma}(k\Gamma_{G}(W)^{\circ}/(X^{2},Y^{2},(XY)^{m}+(-q_{2})^{m}(YX)^{m}))$$
as Hopf algebras for some $m\in \mathbb{N}$ and $q_{2}\in k$ where
$X=\sum_{h\in N}h\cdot (a_{1},e)$ and $Y=\sum_{h\in N}h\cdot
(a_{2},e)$.
\end{theorem}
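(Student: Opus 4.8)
The plan is to reduce everything to the connected case together with a single extension‑theoretic statement. The algebra decomposition $H\cong H_0\oplus\cdots\oplus H_0$ ($|G/N|$ copies) is nothing but Proposition 5.1(2), so the only genuinely new assertion is the Hopf isomorphism with the crossed product. First I would pin down the principal block precisely: by Proposition 5.1(3) the quotient $H_0=k\Gamma_G(W)/(I,\sum_{g\notin N}kv_g)$ is a Hopf algebra whose Gabriel quiver is the connected component $\Gamma_G(W)^{\circ}$, and this component is exactly the covering quiver $\Gamma_N(W)$ of the subgroup $N=\langle W\rangle$ (the vertices reachable from $v_e$ are precisely $\{v_n:n\in N\}$). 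Hence $H_0$ is itself a connected tame graded basic Hopf algebra, and Theorem 4.9 (when $W=(g,g)$) respectively Theorem 4.16 (when $W=(g,h)$, $g\neq h$) fixes its relations as $(X^2,Y^2,XY+YX)$ respectively $(X^2,Y^2,(XY)^m-(-1)^{m-1}q_2^m(YX)^m)$; since $-(-1)^{m-1}q_2^m=(-q_2)^m$, this matches the relation displayed in part (2).

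Next I would exhibit the normal Hopf subalgebra on which the crossed product is built. For each coset $\bar g\in G/N$ set $e_{\bar g}=\sum_{t\in N}v_{gt}$; these are precisely the block idempotents, hence central orthogonal idempotents summing to $1$, and they govern the algebra decomposition. Using $\Delta(v_h)=\sum_{x\in G}v_{hx^{-1}}\otimes v_x$, $S(v_h)=v_{h^{-1}}$ and $\varepsilon(v_h)=\delta_{h,e}$, a short computation gives
\begin{equation*}
\Delta(e_{\bar h})=\sum_{\bar x\in G/N}e_{\bar h\bar x^{-1}}\otimes e_{\bar x},\qquad S(e_{\bar h})=e_{\bar h^{-1}},\qquad \varepsilon(e_{\bar h})=\delta_{\bar h,\bar e}.
\end{equation*}
Thus the linear span of $\{e_{\bar g}\}_{\bar g\in G/N}$ is a Hopf subalgebra isomorphic to $(k(G/N))^{\ast}$, and since its elements are central it is automatically a normal Hopf subalgebra of $H$.

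Then I would assemble the extension and invoke the crossed‑product structure theorem. Killing the non‑principal blocks is the same as quotienting by the ideal generated by $\big((k(G/N))^{\ast}\big)^{+}$, so $H/\big((k(G/N))^{\ast}\big)^{+}H\cong H_0$ and we obtain a short exact sequence of Hopf algebras
\begin{equation*}
k\longrightarrow (k(G/N))^{\ast}\longrightarrow H\longrightarrow H_0\longrightarrow k .
\end{equation*}
Since $(k(G/N))^{\ast}$ is a normal Hopf subalgebra of the finite dimensional Hopf algebra $H$, once the extension is shown to be cleft the standard theory produces a weak action of $H_0$ on $(k(G/N))^{\ast}$ and a convolution‑invertible $2$‑cocycle $\sigma$ realizing $H\cong (k(G/N))^{\ast}\#_\sigma H_0$. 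A candidate cleaving map can be built from a set of coset representatives of $G/N$ together with the vertex idempotents, yielding an $H_0$‑colinear section of the projection.

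I expect the cleftness step to be the main obstacle: one must produce the splitting section explicitly, check both its colinearity and its convolution‑invertibility, and then read off the action and the cocycle $\sigma$ so that the twisted product reproduces the multiplication of $H$. The $G$‑stability of the defining ideal (Lemma 2.6) is what guarantees that every block $g\cdot H_0$ is carried isomorphically onto the principal block, so that the comultiplication and the cocycle data glue coherently across the $|G/N|$ copies; verifying these gluing identities, rather than any single computation, is where the real work lies.
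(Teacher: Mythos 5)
Your setup is exactly the paper's: reduce to the connected case via Proposition 5.1 together with Theorems 4.9 and 4.16 (your identification of the principal block as $k\Gamma_{G}(W)^{\circ}$ modulo the listed relations, and the sign check $-(-1)^{m-1}q_{2}^{m}=(-q_{2})^{m}$, are both correct), exhibit the central Hopf subalgebra $(k(G/N))^{\ast}$ spanned by the block idempotents $e_{\bar g}=\sum_{t\in N}v_{gt}$, and realize $H$ as a crossed product attached to the sequence $k\to (k(G/N))^{\ast}\to H\to H_{0}\to k$. But your argument stops exactly at the decisive point: you write ``once the extension is shown to be cleft'' and then concede that producing a cleaving map and verifying its colinearity, convolution-invertibility and the cocycle identities ``is where the real work lies.'' That step is never carried out, so the claimed isomorphism $H\cong (k(G/N))^{\ast}\#_{\sigma}H_{0}$ is not actually established; a short exact sequence of Hopf algebras does not by itself produce a crossed product decomposition, and there is no canonical coalgebra splitting of the projection $H\to H_{0}$ that you could simply read off.

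The paper closes this gap with a single citation rather than a construction: Schneider's normal basis theorem (the reference \cite{Sch}, ``Normal basis and transitivity of crossed products for Hopf algebras'') asserts that for a finite-dimensional Hopf algebra $H$ and any Hopf algebra surjection $\pi:H\to H_{0}$, the extension is automatically cleft, so that $H\cong H^{co\pi}\#_{\sigma}H_{0}$ with $H^{co\pi}=\{a\in H\;|\;(id\otimes \pi)\Delta(a)=a\otimes 1\}$. Given that theorem, the only thing left to check is $H^{co\pi}=(k(G/N))^{\ast}$, which is precisely the idempotent computation you performed. So your outline is salvageable, but as written it is incomplete: either invoke Schneider's result (making your proposed hand-built cleaving map unnecessary), or actually construct the $H_{0}$-colinear convolution-invertible section and verify the twisted multiplication reproduces that of $H$ --- which is the nontrivial content you left undone.
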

\begin{proof} Proposition 5.1 tells us that we have a Hopf
epimorphism $$\pi:\;\;H\rightarrow  H_{0}.$$ By a result of
Schneider \cite{Sch}, $$H\cong H^{co\pi}\#_{\sigma}H_{0}$$ where
$H^{co\pi}=\{a\in H\;|\;(id\otimes \pi)\Delta(a)=a\otimes 1\}$. It
is not hard to see that $H^{co\pi}=(k(G/N))^{\ast}$. Now the theorem
follows directly by Proposition 5.1, Theorem 4.9 and Theorem
4.16.\end{proof}

\begin{remark}
\emph{(1) }We can answer Problem 1.1 now. By this theorem, only some
special ideals of $\{(x^{2},y^{2}, (xy)^{m}-a(yx)^{m})\;| \;0\neq
a\in k, m\geq 1\}$ can appear and if one of them appears, then $G$
is necessary and sufficient to contain a normal subgroup $N$
satisfying the conditions given in Remark 4.10 or Remark 4.17 (2).

\emph{(2)} Almost all of computations of this paper are based on a
basic and simple observation, that is, the action of $kG\otimes
(kG)^{\textsf{op}}$ is diagonalizable (see paragraphs before Lemma
4.4 and Lemma 4.11) when $G$ is a finite abelian group. This is a
direct consequence of the assumption that $k$ is an algebraically
closed field of characteristic zero. Of course, if the
characteristic of $k$ is big enough to make $kG$ to be semisimple,
then our main results can also be established. Through developing a
suitable lifting method (see \cite{AH}\cite{AH3} for lifting of
pointed Hopf algebras), it is hopeful to get the classification of
all tame basic Hopf algebras over an algebraically closed field $k$
of characteristic zero at last. In general, the classification of
tame basic Hopf algebras (even radically graded) over an
algebraically closed field of \emph{positive} characteristic is
still an open and interesting question.

\emph{(3)} It is known that finite-dimensional Hopf algebras are
Frobenius algebras and of course they are selfinjective. The
classification of selfinjective algebras according to their
representation type over an algebraically closed field has been
researched for a long time. For the current stage of this subject,
see the survey article \cite{Sko}. The same question for tensor
product algebras, which are essential for Hopf algebras, has also
been investigated. In particular, all tame tensor product algebras
of nontrivial basic algebras over an algebraically closed field are
completely described \cite{Les}.

\end{remark}

\section*{Acknowledgements}

We are grateful to Professor Zhi-Wei Sun at Nanjing University for
providing the proof of Proposition 3.2 and to the referee for
his/her very helpful comments.

\end{document}